\documentclass[a4paper,12pt]{article}
\usepackage{amsmath,amssymb,amsthm,textcomp} 
\usepackage{xargs}
\usepackage{enumerate}
\usepackage{multirow}

\usepackage{geometry}
\geometry{top=3cm, bottom=2cm, left=2cm, right=2cm}

\newcommandx\ModSpace[4][1=S,2=k,3=0,4=N,usedefault]{\mathcal{#1}_{#2}\left(\Gamma_{#3}\left(#4\right)\right)}

\newcommand{\C}{\mathbf{C}}
\newcommand{\condnu}{\mathfrak{c}}
\newcommand{\condeps}{\mathfrak{c}}

\newcommand{\cyclomod}[1][\ell]{\overline{\chi}_{#1}}
\newcommand{\Ebar}{\overline{E}}
\newcommand{\Eprimebar}{\overline{E'}}
\newcommand{\elambda}{\epsilon_{\lambda}}
\newcommand{\Etwobar}{\overline{E}_2}
\newcommand{\Etwoubar}[1]{\overline{E}_{2,#1}}
\newcommand{\Etwou}[1]{E_{2,u}}
\newcommand{\F}{\mathbf{F}}
\newcommand{\Flambda}{\mathbf{F}_{\lambda}}
\newcommand{\fbar}{\overline{f}}
\newcommand{\Fellbar}{\overline{\mathbf{F}}_{\ell}}
\newcommand{\Frob}{\mathrm{Frob}}
\newcommand{\ftilde}{\widetilde{f}}
\newcommand{\Gal}{\mathrm{Gal}}
\newcommand{\Glambdabar}{\overline{G}_{\lambda}}
\newcommand{\GQ}{G_{\Q}}
\newcommand{\GL}{\mathrm{GL}}
\newcommand{\gtilde}{\widetilde{g}}
\newcommand{\ibar}{\underline{i}}
\newcommand{\Id}{\mathrm{Id}}
\newcommand{\lcm}{\mathrm{lcm}}
\newcommand{\localcond}{N(\rhobarss)}
\newcommand{\nubar}{\overline{\nu}}

\newcommand{\PGL}{\mathrm{PGL}}
\newcommand{\projim}{\mathbf{P}(\Glambdabar)}
\newcommand{\projrhobar}{\mathbf{P}(\rhobar_{f,\lambda})}
\newcommand{\Q}{\mathbf{Q}}
\newcommand{\Qbar}{\overline{\mathbf{Q}}}

\newcommand{\rhobar}{\overline{\rho}}
\newcommand{\rhobarss}{\overline{\rho}^{ss}}
\newcommand{\SL}{\mathrm{SL}}

\newcommand{\trivial}{\mathbf{1}}
\newcommand{\Up}[1][p]{\mathcal{U}_{#1}}

\newcommand{\Z}{\mathbf{Z}}

\newtheorem{defi}{Definition}[section]
\newtheorem{lemma}{Lemma}[section]
\newtheorem{prop}{Proposition}[section]
\newtheorem{thm}{Theorem}[section]
\newtheorem*{unnumberedthm}{Theorem}

\title{Explicit Large Image Theorems for Modular Forms}
\author{Nicolas Billerey and Luis V. Dieulefait}

\begin{document}

\maketitle

\begin{abstract}
Let $k$ and $N$ be positive integers with $k\ge2$ even. In this paper we give general explicit upper-bounds in terms of~$k$ and~$N$ from which all the residual representations $\rhobar_{f,\lambda}$ attached to non-CM newforms of weight~$k$ and level~$\Gamma_0(N)$ with $\lambda$ of residue characteristic greater than these bounds are ``as large as possible''. The results split into different cases according to the possible types for the residual images and each of them is illustrated on some numerical examples.
\end{abstract}


    \section*{Introduction}

Let $f$ be a newform of weight $k\ge 2$, level $N\ge 1$ and trivial Nebentypus whose Fourier expansion at infinity is given by $f(\tau)=q+\sum_{n\ge 2}a_nq^n$, with $q=e^{2i\pi\tau}$ and $\tau$ in the complex upper half-plane. We denote by $K$ the number field generated by the coefficients $a_n$ and by $\mathcal{O}$ its ring of integers. Given a prime~$\ell$, we shall denote by $\rho_{f,\ell}$ the $\ell$-adic representation attached to~$f$ by Deligne~:
\begin{equation*}
\rho_{f,\ell}: \Gal(\Qbar/\Q)\longrightarrow \GL(2,\mathcal{O}\otimes_{\Z}\Z_{\ell}).
\end{equation*}
The decomposition $\mathcal{O}\otimes_{\Z}\Z_{\ell}=\prod_{\lambda\mid\ell}\mathcal{O}_{\lambda}$ where the product runs over prime ideals in~$\mathcal{O}$ of residue characteristic~$\ell$, in turn produces for each such~$\lambda$ a representation $\rho_{f,\lambda}$ with values in~$\GL(2,\mathcal{O}_{\lambda})$ where $\mathcal{O}_{\lambda}$ is the completion of~$\mathcal{O}$ at~$\lambda$. Composing it with the reduction map $\GL(2,\mathcal{O}_{\lambda})\rightarrow \GL(2,\Flambda)$, where $\Flambda$ is the residue field of~$\lambda$, finally gives rise to a representation $\rhobar_{f,\lambda}$ which is unique up to semi-simplification.

Let us denote by $\Glambdabar$ the image of $\rhobar_{f,\lambda}$. Using results of Carayol (\cite{Car86}), Ribet proved in~\cite[th.~2.1]{Rib85} the following theorem (for a definition of forms with complex multiplication see~\cite{Rib77} or Def.~\ref{defi:cm_forms}).
\begin{unnumberedthm}[Ribet, 1985]
Assume that $f$ is not a form with complex multiplication. Then for almost all~$\lambda$ (i.e. all but a finite number) the following assertions hold~:
\begin{enumerate}
\item the representation $\rhobar_{f,\lambda}$ is irreducible;
\item the order of the group $\Glambdabar$ is divisible by the residue characteristic of~$\lambda$.
\end{enumerate}
\end{unnumberedthm}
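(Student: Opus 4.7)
The plan is to handle both assertions together via a case analysis on the projective image $\projim\subset\PGL(2,\Fellbar)$, using Dickson's classification: any finite subgroup of $\PGL(2,\Fellbar)$ is either (a)~cyclic, (b)~dihedral, (c)~isomorphic to one of the exceptional groups $A_4$, $S_4$, $A_5$, or (d)~contains a conjugate of $\mathrm{PSL}(2,\F_{\ell^r})$ for some $r\ge 1$. Only case~(d) forces~$\ell$ to divide $|\Glambdabar|$, so both claims reduce to showing that for all but finitely many~$\lambda$ the projective image is of type~(d).

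For~(1), if $\rhobarss_{f,\lambda}=\psi_1\oplus\psi_2$, then $\psi_1\psi_2=\cyclomod^{k-1}$, and, by Carayol's results on the conductor of $\rho_{f,\ell}$, each $\psi_i$ is unramified outside $N\ell$ with conductor bounded uniformly in~$\ell$. Writing $\psi_i=\eta_i\cdot\cyclomod^{a_i}$ with $\eta_i$ of bounded conductor and $0\le a_i\le\ell-2$ puts the pairs $(\psi_1,\psi_2)$ into a \emph{finite} list as soon as $\ell$ exceeds a bound depending only on~$N$. The trace relation $a_p\equiv\psi_1(\Frob_p)+\psi_2(\Frob_p)\pmod{\lambda}$ at $p\nmid N\ell$ is then a congruence between $f$ and one of finitely many Eisenstein-type series; for $\ell$ sufficiently large no such congruence can hold, so only finitely many~$\lambda$ give a reducible~$\rhobar_{f,\lambda}$.

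For~(2), assume $\rhobar_{f,\lambda}$ is irreducible (which, by~(1), already holds for almost all~$\lambda$) and $\ell\nmid|\Glambdabar|$. The cyclic case~(a) is ruled out by irreducibility and Schur's lemma. In the exceptional cases~(c), the projective image has order at most~$60$, so the eigenvalues of $\rhobar_{f,\lambda}(\Frob_p)$ are roots of unity of uniformly bounded order; combined with the Ramanujan--Deligne bound $|a_p|\le 2p^{(k-1)/2}$, this restricts $a_p$ enough that only finitely many~$\lambda$ are compatible. The dihedral case~(b) is the substantial one: there $\rhobar_{f,\lambda}$ is induced from a character of a quadratic field $K_\lambda$ whose ramification is again controlled by Carayol, so $K_\lambda$ lies in a \emph{finite} list of fields ramified only above primes dividing~$N$. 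By pigeonhole, if case~(b) occurred for infinitely many~$\lambda$, a single field $K$ would arise for infinitely many~$\lambda$, and the vanishing $\tr\rhobar_{f,\lambda}(\Frob_p)\equiv 0\pmod{\lambda}$ for $p$ inert in~$K$ would force the algebraic integer $a_p$ to be divisible by infinitely many primes, hence $a_p=0$, for every such~$p$; this density-$1/2$ vanishing contradicts the non-CM hypothesis.

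The main obstacle is the dihedral case: one must pass from a mod~$\lambda$ triviality of an induced representation (for infinitely many~$\lambda$) to a global statement about~$f$. The argument hinges on two ingredients which must be combined carefully---Carayol's ramification control, to cut the candidate quadratic fields down to a finite list independent of~$\ell$, together with a Chebotarev-flavoured density argument that promotes mod~$\lambda$ vanishing at infinitely many~$\lambda$ into the honest equalities $a_p=0$. The remaining cases then reduce to bookkeeping against Dickson's classification.
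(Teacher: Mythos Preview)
Your overall architecture via Dickson's classification matches both Ribet's original argument and the paper's effective refinement of it. However, there is a recurring gap: you invoke Carayol to bound ramification, but Carayol's results control only the prime-to-$\ell$ part of the conductor. They say nothing about ramification at~$\ell$ itself, and this is precisely where your finiteness claims break down.

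In the dihedral case~(b), the quadratic field $K_\lambda$ is unramified outside~$N\ell$, but it may well ramify at~$\ell$: indeed, when $N$ is square-free the paper shows that $K_\lambda$ is ramified \emph{only} at~$\ell$. So Carayol alone does not yield a finite list of candidate fields; as~$\ell$ varies, $K_\lambda$ can vary with it (e.g.\ $K_\lambda=\Q(\sqrt{\ell^*})$), and your pigeonhole step collapses. The missing ingredient is the Deligne--Fontaine description of $\rhobar_{|I_\ell}$ (\S\ref{ss:local_description_at_ell}): for $\ell>k$ the projective image of inertia at~$\ell$ is cyclic of order $(\ell\pm 1)/\gcd(\ell\pm 1,k-1)$, which exceeds~$2$ once $\ell\notin\{2k-3,2k-1\}$, forcing it into the cyclic part~$C$ of the dihedral group and hence making~$\elambda$ unramified at~$\ell$. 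Only then does the finite-list-plus-pigeonhole argument go through. The same omission bites in your reducible case: writing $\psi_i=\eta_i\cyclomod^{a_i}$ with $\eta_i$ of bounded conductor still leaves $a_i\in\{0,\dots,\ell-2\}$, which is not a finite set independent of~$\ell$; one needs the local-at-$\ell$ input to pin down $\{a_1,a_2\}=\{0,k-1\}$ as in~(\ref{eq:Faltings_Jordan_decomposition}). Finally, in case~(c) the eigenvalues of $\rhobar_{f,\lambda}(\Frob_p)$ are not roots of unity (their product is $p^{k-1}\bmod\lambda$)---only their ratio is---and the paper's route via the order of the cyclic inertial image at~$\ell$ is both cleaner and uses exactly the local input you are missing elsewhere.
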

As explained in~\cite[\S3]{Rib85}, this theorem implies that for almost all primes~$\ell$, the image~$G_{\ell}$ of~$\rho_{f,\ell}$ is as ``large'' as possible. Namely, if for simplicity $f$ does not have any inner twist (see~\cite{Rib77} for a definition), the following equality holds for all but finitely many~$\ell$~:
\begin{equation*}
G_{\ell}=\left\{x\in\GL(2,\mathcal{O}\otimes_{\Z}\Z_{\ell})\mid \det(x)\in\Z_{\ell}^{*(k-1)}\right\},
\end{equation*}
where $\Z_{\ell}^{*(k-1)}$ denotes the group of $(k-1)$-th powers in~$\Z_{\ell}^*$.

This theorem is a generalization of~\cite{Rib75} on the case $N=1$, which itself extends pioneer results of Serre (\cite{Ser73}) and Swinnerton-Dyer (\cite{SwD73}) on the the case $N=1$ and $K=\Q$. Although these latter results provide a precise characterization of the prime ideals for which one of the assertions above fails, the general theorem of Ribet is however non-effective. 

The main goal of this paper is to give an effective version of Ribet's theorem, that is a general explicit set of prime numbers depending the weight~$k$ and the level~$N$ such that each representation $\rhobar_{f,\lambda}$ with $f$ newform in~$\ModSpace[S]$ and $\lambda$ of residue characteristic away from this set satisfies the conclusion of Ribet's theorem.

Before describing our main results, we mention that among the special cases covered are a generalization to arbitrary square-free levels of a result of Mazur (\cite{Maz77}) on the so-called Eisenstein primes for weight $2$ and prime level modular forms  and an explicit version of Serre's theorem on the surjectivity of Galois representations attached to elliptic curves over~$\Q$ due to Kraus (\cite{Kra95}) and Cojocaru (\cite{Coj05}). 

Let us denote by $\projrhobar$ the projectivization of~$\rhobar_{f,\lambda}$ and by $\projim$ its image in~$\PGL(2,\Flambda)$. For simplicity, we shall say that $\lambda$ is exceptional if it belongs to the finite set of prime ideals for which one of the assertions of Ribet's theorem does not hold. According to Dickson's classification of subgroups of~$\PGL(2,\Flambda)$ (\cite[Prop.~16]{Ser72}), if $\lambda$ is exceptional (we warn the reader that in the literature, the term ``exceptional'' sometimes refers to the last situation below only), then we have~:
\begin{enumerate}[(i)]
\item\label{item:reducible} either $\rhobar_{f,\lambda}$ is reducible;
\item\label{item:dihedral} or the image $\projim$ in~$\PGL(2,\Flambda)$ is dihedral;
\item\label{item:exceptional} or $\projim$ is isomorphic to $A_4$, $S_4$ or $A_5$.
\end{enumerate}
In each case, we thus provide a divisibility relation or an upper-bound in terms of $k$ and $N$ satisfied by the residue characteristic~$\ell$ of~$\lambda$. A general bound can therefore be obtained by combining the results of the three situations. The last case is the simplest one. Namely we prove~:
\begin{unnumberedthm}[Thm.~\ref{thm:image_isom_A_4}]
If $\projim$ is isomorphic to $A_4$, $S_4$ or $A_5$, then either $\ell\mid N$ or $\ell\le 4k-3$. 
\end{unnumberedthm}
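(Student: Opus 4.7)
Assume for contradiction that $\ell \nmid N$ and $\ell > 4k-3$; then $\ell > k \ge 2$, so $\ell \ge 7$. I will show this contradicts $\projim \in \{A_4, S_4, A_5\}$.

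The crux is to identify the image of the inertia subgroup $I_\ell$ inside $\projim$. Since $|\projim|$ divides $|A_5| = 60 = 2^2\cdot 3\cdot 5$ and $\ell \ge 7$, the order $|\Glambdabar|$ is prime to $\ell$; hence wild inertia at $\ell$ (being pro-$\ell$) maps trivially, and $\rhobar_{f,\lambda}|_{I_\ell}$ factors through the procyclic tame quotient. Using the description of the local representation at $\ell$---available because $\ell \nmid N$ and $\ell > k$, via Deligne's construction combined with Fontaine--Laffaille theory---the tame inertia acts semisimply either through $\cyclo^{k-1}$ and $\trivial$ (ordinary case), or through the $(k-1)$-th powers of the two fundamental characters of level~$2$ (supersingular case). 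The image of $I_\ell$ in $\PGL(2, \Flambda)$ is therefore cyclic of order
\[
m \;=\; \frac{\ell - \epsilon}{\gcd(\ell - \epsilon,\, k-1)}, \qquad \epsilon \in \{+1, -1\},
\]
with $\epsilon = +1$ in the ordinary case and $\epsilon = -1$ in the supersingular case.

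In $A_4$, $S_4$, $A_5$ the orders of cyclic subgroups are at most $3$, $4$, $5$ respectively, so $m \le 5$. I rule out $m = 5$ using that $k$ is even: $m = 5$ would force $5 \mid \ell - \epsilon$ with $(\ell - \epsilon)/5$ dividing the odd integer $k - 1$, so $(\ell - \epsilon)/5$ would be odd; however $\ell - \epsilon$ is even (as $\ell$ is odd and $\epsilon = \pm 1$) and $5$ is coprime to $2$, so $(\ell - \epsilon)/5$ is in fact even---a contradiction. Hence $m \le 4$, which yields $(\ell - \epsilon)/4 \le \gcd(\ell - \epsilon, k - 1) \le k - 1$, i.e., $\ell \le 4(k - 1) + \epsilon \le 4k - 3$, contradicting the standing assumption $\ell > 4k - 3$.

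The main obstacle is the first step---producing the precise semisimplified shape of $\rhobar_{f,\lambda}|_{I_\ell}$ in both ordinary and supersingular cases---which relies on Fontaine--Laffaille theory valid under $\ell > k$. The remainder is a brief group-theoretic classification followed by a parity argument exploiting the hypothesis that $k$ is even (without which the tighter bound $4k-3$ would degrade to $5k-4$).
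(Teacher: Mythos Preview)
Your proof is correct and follows essentially the same strategy as the paper: assume $\ell\nmid N$ and $\ell>4k-3$, invoke the Deligne--Fontaine description of $\rhobar_{f,\lambda}|_{I_\ell}$ (valid since $\ell>k$ and $\ell\nmid N$), and observe that the projective image of inertia is cyclic of order $(\ell-\epsilon)/\gcd(\ell-\epsilon,k-1)$ with $\epsilon=\pm1$, which exceeds~$5$---impossible inside $A_4$, $S_4$, or $A_5$. The paper packages the order computation into its Lemma~\ref{lemma:cardinality_image}, declared ``immediate''; you unwind that lemma explicitly, in particular making the parity step visible (since $k-1$ is odd and $\ell-\epsilon$ is even, the quotient $m$ is always even, so $m=5$ is excluded and one gets $m\le 4$ rather than merely $m\le 5$). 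Your closing remark that without even weight the bound would relax to $5k-4$ is a nice clarification of where the hypothesis on~$k$ enters. One small point you leave implicit: deducing that $|\Glambdabar|$ is prime to~$\ell$ from $|\projim|\mid 60$ uses that the kernel of $\Glambdabar\to\projim$ consists of scalar matrices, whose order divides $|\Flambda|-1$; this is routine but worth a phrase.
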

In the second case we give a general upper-bound together with a much finer result in the square-free level case that imply the following~:
\begin{unnumberedthm}[Thm.~\ref{thm:dihedral_case}]
Assume $\projim$ to be dihedral. If $f$ does not have complex multiplication, then we have
\begin{equation*}
\ell\le \left(2\left(4.8kN^2(1+\log\log N)\right)^{\frac{k-1}{2}}\right)^{g_0^{\sharp}(k,N)},
\end{equation*}
where $g_0^{\sharp}(k,N)$ is the number of newforms of weight~$k$ and level~$\Gamma_0(N)$. Besides, if $N$ is square-free, then either $\ell\mid N$, or $\ell\le k$, or $\ell=2k-1$.
\end{unnumberedthm}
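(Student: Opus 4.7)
By Dickson's classification, $\projim$ is dihedral precisely when there exists a non-trivial quadratic character $\psi\colon\GQ\to\{\pm1\}$ with
\begin{equation*}
\rhobar_{f,\lambda}\otimes\psi\simeq\rhobar_{f,\lambda},
\end{equation*}
so that $\rhobar_{f,\lambda}$ is induced from a character of the quadratic field cut out by $\psi$. Comparing traces at unramified Frobenii yields the congruence $a_p(f)\equiv\psi(p)a_p(f)\pmod\lambda$ for every $p\nmid\ell N$, so that, for odd $\ell$, $\lambda\mid a_p(f)$ whenever $\psi(p)=-1$. Carayol's theorem moreover controls $\psi$: the prime-to-$\ell$ part of its conductor divides $N$.

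For the general bound I would argue as follows. Since $f$ has no complex multiplication and $\psi$ is non-trivial, the twist $f\otimes\psi$ is a newform of some level dividing $N^2$ distinct from $f$. Applying Sturm's bound to the Hecke eigendifference $f-f\otimes\psi$ in weight $k$ and level $N^2$, and inserting an effective Mertens estimate for $\prod_{q\mid N}(1+1/q)$, produces a prime
\begin{equation*}
p\le 4.8\,kN^2(1+\log\log N)
\end{equation*}
for which $\psi(p)=-1$ and $a_p(f)\ne 0$. The non-zero algebraic integer $a_p(f)$ lies in the coefficient field $K$ of $f$, hence $\ell$ divides the non-zero rational integer $N_{K/\Q}(a_p(f))$; bounding each of the $[K:\Q]$ conjugates by Deligne's inequality $|\sigma(a_p(f))|\le 2p^{(k-1)/2}$ and using $[K:\Q]\le g_0^\sharp(k,N)$ then gives the displayed estimate.

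The square-free case requires a much sharper local analysis. At every prime $p\|N$, the local component $\rhobar_{f,\lambda}|_{G_p}$ is either unramified or of Steinberg type, and in both situations $\rhobar_{f,\lambda}\otimes\psi\simeq\rhobar_{f,\lambda}$ forces $\psi|_{I_p}$ to be trivial (otherwise the local conductor would strictly increase). Hence the conductor of $\psi$ is a pure power of $\ell$; the case $\ell=2$ being subsumed by $\ell\le k$, one may take $\ell$ odd, and the only non-trivial option is $\psi=\chi_{\ell^*}$ with $\ell^*=(-1)^{(\ell-1)/2}\ell$. Comparing the action of $\psi|_{I_\ell}$ --- the unique quadratic character of $I_\ell^{\mathrm{t}}/I_\ell^{\mathrm{w}}\simeq\F_\ell^\times$, namely $\cyclo^{(\ell-1)/2}$ --- with the Fontaine--Laffaille description of $\rhobar_{f,\lambda}|_{I_\ell}$ as $\cyclo^a\oplus\cyclo^{k-1-a}$ for some $0\le a\le k-1$ (valid when $\ell>k$), the inner twist can hold only if $k-1-2a\equiv(\ell-1)/2\pmod{\ell-1}$; the bound $0\le a\le k-1$ then forces the equality $\ell=2k-1$ in that range, while the range $\ell\le k$ is part of the conclusion. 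The alternative $\ell\mid N$ absorbs the remaining possibility in which level-lowering at $\ell$ disrupts the local picture at the primes dividing $N$.

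The hardest step, I anticipate, is the fine local analysis at~$\ell$: boundary cases where $\ell$ is only slightly larger than $k$ force one to work outside the strict Fontaine--Laffaille range, so that $\rhobar_{f,\lambda}|_{I_\ell}$ may be described by fundamental characters of level two rather than by a pair of powers of $\cyclo$, and one has to verify that no exotic inner twist slips through. Pinning down the constant $4.8$ in the general bound amounts to combining sharp Rosser--Schoenfeld estimates for $\sum_{q\le x}1/q$ with Sturm's inequality, and is otherwise routine.
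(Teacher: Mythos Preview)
Your overall strategy matches the paper's, but there are two genuine gaps.

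\textbf{The local analysis at $\ell$ is needed for the general bound too.} You correctly note that Carayol only controls the prime-to-$\ell$ part of the conductor of $\psi$, yet you then assert that $f\otimes\psi$ has level dividing $N^2$. If $\psi$ ramifies at $\ell$, the level of the twist picks up a factor of $\ell^2$, the Sturm bound becomes $\ell$-dependent, and the argument is circular. The paper resolves this by proving a proposition valid for \emph{any} $N$: if $\ell\nmid N$, $\ell>k$, and $\ell\notin\{2k-1,2k-3\}$, then $\elambda$ is unramified at~$\ell$. The proof is essentially the one you sketch for the square-free case (projective inertia at $\ell$ is cyclic of order $>2$, hence lands in the cyclic part $C$ of the dihedral group). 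Only \emph{after} this is the twist placed at a level independent of~$\ell$. Incidentally, because quadratic characters can have conductor $4$ or $8$ at the prime~$2$, the paper works at level $2^4N^2$ rather than $N^2$, and introduces an auxiliary twist by a fixed Kronecker symbol $\left(4D_0'/\cdot\right)$ to kill Fourier coefficients at primes dividing $2N$; this is what makes the extraction of a \emph{prime} $q$ with $\elambda(q)=-1$ and $a_q\neq 0$ from the \emph{integer} $n$ provided by Sturm/Murty go through cleanly.

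\textbf{The supersingular case produces $\ell=2k-3$, which must be excluded separately.} Your Fontaine--Laffaille description ``$\cyclo^a\oplus\cyclo^{k-1-a}$ for some $0\le a\le k-1$'' is not what one has for a newform: when $\ell>k$ one gets either the ordinary shape $\cyclomod^{k-1}\oplus 1$ or the supersingular shape $\psi^{k-1}\oplus\psi^{\ell(k-1)}$ with $\psi$ a fundamental character of level~$2$. In the ordinary case your computation (with $a=0$) correctly gives $\ell=2k-1$. In the supersingular case the projective image of inertia is cyclic of order $(\ell+1)/\gcd(\ell+1,k-1)$, and this equals $2$ precisely when $\ell=2k-3$; so the self-twist does ``slip through'' there. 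The statement of the theorem does \emph{not} list $2k-3$, and the paper eliminates it by invoking an external lemma of Dieulefait (\cite[Lem.~3.2]{Die12}). Your proposal flags the supersingular case as the hard step but gives no mechanism to rule out $\ell=2k-3$; without that, the square-free conclusion as stated does not follow.
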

The first case is by far the most complicated one and we refer the reader to Theorems~\ref{thm:val_at_2}, \ref{thm:general_thm}, \ref{thm:square_free_level_case} and~\ref{th:last_case} for precise and complete statements. Nevertheless, these results combined with those mentioned in this introduction yield to (slightly stronger versions of) the following theorems in the particular but important cases where $N$ is square-free and $N$ is a square respectively.

\begin{unnumberedthm}[Square-free level case]
Assume that $N=p_1\cdots p_t$ where $p_1,\ldots,p_t$ are $t\ge 1$ distinct primes, is square-free, and $\lambda$ is exceptional. Then, we have~:
\begin{enumerate}
\item either $\ell\in\{p_1,\ldots,p_t\}$;cc
\item or $\ell\le 4k-3$;
\item or $\ell$ divides $\displaystyle{\left\{\begin{array}{ll}
       \gcd_{1\le i\le t}\left(\lcm\left(p_i^{k}-1,p_i^{k-2}-1\right)\right) & \textrm{if }k>2 \\
       \lcm_{1\le i\le t}\left(p_i^{2}-1\right) & \textrm{if }k=2 \\
       \end{array}
\right.}$.
\end{enumerate}
\end{unnumberedthm}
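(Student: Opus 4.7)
The plan is to treat the three Dickson cases~(\ref{item:reducible}),~(\ref{item:dihedral}),~(\ref{item:exceptional}) separately and verify that in each the exceptional prime $\ell$ falls into one of the three bullets. Cases~(\ref{item:exceptional}) and~(\ref{item:dihedral}) will follow from theorems already announced in the introduction and feed bullets~1 and~2; the arithmetic condition of bullet~3 will come from a local-global analysis in the reducible case.

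Case~(\ref{item:exceptional}) is immediate from Theorem~\ref{thm:image_isom_A_4}: it yields $\ell\mid N$, which is exactly bullet~1 because $N$ is squarefree, or $\ell\le 4k-3$, which is bullet~2. For case~(\ref{item:dihedral}), one observes that no newform on $\Gamma_0(N)$ with squarefree $N$ has complex multiplication (CM newforms of trivial Nebentypus are known to have non-squarefree level, as they correspond to Grossencharacters whose theta series pick up the conductor of the CM field in a non-squarefree way), so the non-CM hypothesis in Theorem~\ref{thm:dihedral_case} is automatic. Its squarefree-level conclusion $\ell\mid N$, $\ell\le k$, or $\ell=2k-1$ then lands in bullets~1 or~2, since $2k-1\le 4k-3$ for $k\ge 2$.

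The heart of the argument is the reducible case~(\ref{item:reducible}); here I would invoke Theorems~\ref{thm:general_thm} and~\ref{thm:square_free_level_case}, specialised to squarefree level, and combine their outputs. The underlying mechanism is the following. Write $\rhobarss\simeq\chi_1\oplus\chi_2$ with $\chi_j\colon\Gal(\Qbar/\Q)\to\Flambda^*$ satisfying $\chi_1\chi_2=\cyclomod^{k-1}$, and assume $\ell\notin\{p_1,\ldots,p_t\}$ and $\ell>4k-3$ (otherwise we are in bullets~1 or~2). At each $p_i\|N$, $p_i\ne\ell$, the local component of $f$ is an unramified quadratic twist of Steinberg, whose semisimplification is unramified at $p_i$ with Frobenius eigenvalues $\bar\mu_i p_i^{(k-2)/2}$ and $\bar\mu_i p_i^{k/2}$ for some sign $\bar\mu_i\in\{\pm1\}$. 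Hence each $\chi_j$ is unramified outside $\ell$, i.e.\ a power of $\cyclomod$, and Fontaine--Laffaille at $\ell$ (applicable because $\ell>k$) pins down $\{\chi_1,\chi_2\}=\{\trivial,\cyclomod^{k-1}\}$. Comparing the set $\{\chi_1(\Frob_{p_i}),\chi_2(\Frob_{p_i})\}=\{1,p_i^{k-1}\}$ with $\{\bar\mu_i p_i^{(k-2)/2},\bar\mu_i p_i^{k/2}\}$ in $\Flambda$ at each $p_i$ leaves exactly two matchings, producing the dichotomy $\ell\mid p_i^{k-2}-1$ or $\ell\mid p_i^k-1$. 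Since this must hold simultaneously at every $p_i$, for $k>2$ we deduce $\ell\mid\gcd_i\lcm(p_i^k-1,p_i^{k-2}-1)$, which is bullet~3.

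The principal obstacle is the degenerate behaviour at $k=2$: the factor $p_i^{k-2}-1$ vanishes, so the local alternative becomes vacuous at any $p_i$ whose Atkin--Lehner sign equals $+1$ and a purely local argument does not suffice to bound $\ell$. One must supplement it with a global Eisenstein-ideal type input, packaged inside Theorem~\ref{thm:square_free_level_case}, which excludes the all-signs-$+1$ scenario for $\ell$ large and forces $\ell\mid\lcm_i(p_i^2-1)$. Beyond this, the careful handling of the ordinary versus supersingular dichotomy at~$\ell$ and of the Fontaine--Laffaille normalisation for small $\ell$ relative to $k$ are the secondary technical points, and those are precisely what the cited theorems encapsulate.
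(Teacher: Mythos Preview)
Your proposal is correct and mirrors the paper's own strategy: the unnumbered theorem is not proved independently but is obtained by combining Theorems~\ref{thm:image_isom_A_4}, \ref{thm:dihedral_case}, and~\ref{thm:square_free_level_case} exactly as you indicate, with the local comparison at Steinberg primes (your matching of $\{1,p_i^{k-1}\}$ against $\{\bar\mu_i p_i^{k/2-1},\bar\mu_i p_i^{k/2}\}$) being precisely the mechanism behind the first part of Theorem~\ref{thm:square_free_level_case}.

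One small wording point on the $k=2$ case: the Eisenstein input of Theorem~\ref{thm:square_free_level_case} does not ``exclude the all-signs-$+1$ scenario''; rather, part~(c) \emph{bounds} $\ell$ in that scenario by forcing $\ell\mid\prod_i(p_i-1)$, while it is the all-signs-$-1$ scenario that is excluded (part~(b)). In fact part~(b) is not even needed for the stated conclusion: if some $a_{p_i}=-1$ then part~(a) gives $\ell\mid p_i+1$, and if all $a_{p_i}=+1$ then part~(c) gives $\ell\mid p_j-1$ for some $j$; either way $\ell\mid\lcm_i(p_i^2-1)$. This does not affect the validity of your argument, since you correctly defer to Theorem~\ref{thm:square_free_level_case} for the global input.
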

\begin{unnumberedthm}[Square level case]
Assume that $N=c^2$ is a square, $f$ does not have complex multiplication and $\lambda$ is exceptional. Then, we have~:
\begin{enumerate}
\item either $\ell\mid N$
\item or $\ell\le \left(2\left(4.8kN^2(1+\log\log N)\right)^{\frac{k-1}{2}}\right)^{g_0^{\sharp}(k,N)}$, where $g_0^{\sharp}(k,N)$ is the number of newforms of weight~$k$ and level~$\Gamma_0(N)$;
\item or there exists a primitive Dirichlet character~$\nu:(\Z/c\Z)^{\times}\rightarrow\C^{\times}$ such that~:
\begin{enumerate}
\item either $\ell$ divides the norm of $p^k-\epsilon^{-1}(p)$ for some prime $p\mid c$;
\item or $\ell$ divides the numerator of the norm of $B_{k,\epsilon}/2k$
\end{enumerate}
where $c_0$ divides $c$, $\epsilon:(\Z/c_0\Z)^{\times}\rightarrow\C^{\times}$ is the inverse of the primitive Dirichlet character attached to~$\nu^2$ and $B_{k,\epsilon}$ is the $k$-th Bernoulli number attached to~$\epsilon$.
\end{enumerate}

\end{unnumberedthm}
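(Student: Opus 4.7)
The plan is to apply Dickson's trichotomy (cases (\ref{item:reducible})--(\ref{item:exceptional})) to the exceptional prime~$\lambda$ and then invoke the case-by-case results already established in the paper, exploiting the shape $N=c^2$ in the reducible case to pin down the characters appearing in the semisimplification of $\rhobar_{f,\lambda}$.

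The two easier cases are quickly dispatched. In case~(\ref{item:exceptional}), Theorem~\ref{thm:image_isom_A_4} gives $\ell\mid N$ or $\ell\le 4k-3$, and the second alternative is absorbed by the upper bound in conclusion~2 (a straightforward check for $k\ge 2$). In case~(\ref{item:dihedral}), the no-CM hypothesis puts us under the assumptions of Theorem~\ref{thm:dihedral_case}, which delivers conclusion~2 directly. After these two steps it remains to treat the reducible case~(\ref{item:reducible}).

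In the reducible case I would write $\rhobar_{f,\lambda}^{ss}=\nu_1\oplus\nu_2$ with $\nu_1\nu_2=\cyclomod^{k-1}$, and decompose each $\nu_i$ as the product of a tame character $\tilde\nu_i$ (attached to a Dirichlet character) with a power of~$\cyclomod$. The local conductor analysis of Theorems~\ref{thm:val_at_2}, \ref{thm:general_thm}, \ref{thm:square_free_level_case} and~\ref{th:last_case}, specialized to $N=c^2$, forces the tame parts to be attached to primitive Dirichlet characters of conductor dividing~$c$. Setting $\nu=\tilde\nu_1$ (viewed modulo~$c$) and letting $\epsilon$ denote the inverse of the primitive character attached to~$\nu^2$ (of some conductor $c_0\mid c$), the natural congruence companion of~$f$ is the weight-$k$ Eisenstein series with characters $(\epsilon,\epsilon^{-1})$, which makes sense at level~$\Gamma_0(c^2)$ with trivial Nebentypus precisely because $N$ is a square. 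The Hecke-eigenvalue congruence at each prime $p\mid c$ produces the divisibility~3(a) by the norm of $p^k-\epsilon^{-1}(p)$; in the absence of such an obstruction, the comparison of constant terms of~$f$ and of the Eisenstein series at the cusp~$\infty$ yields~3(b), i.e.\ divisibility by the numerator of the norm of $B_{k,\epsilon}/2k$.

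The main obstacle I anticipate is in the reducible case: one must verify that, under $N=c^2$, the conductor bounds of Theorems~\ref{thm:val_at_2}--\ref{th:last_case} really do produce the parametrization by a single primitive character~$\nu$ modulo~$c$ (rather than a more exotic combination of cyclotomic twists whose tame parts spread across different conductors), and one must carefully track the local/global trade-off between the ramified-prime congruences at $p\mid c$ and the constant-term congruence so that conclusions~3(a) and~3(b) together exhaust all surviving possibilities. Once this bookkeeping is in place, combining the three cases of Dickson's trichotomy yields the stated trichotomy~1/2/3.
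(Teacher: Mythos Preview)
Your high-level strategy (Dickson's trichotomy, then invoke Theorems~\ref{thm:image_isom_A_4}, \ref{thm:dihedral_case}, and the reducible-case analysis) is exactly the paper's approach, and your treatment of cases~(\ref{item:dihedral}) and~(\ref{item:exceptional}) is correct. For case~(\ref{item:reducible}) the paper simply quotes Theorem~\ref{thm:general_thm}, part~3 (the square-level clause), and then absorbs the side conditions $\ell<k-1$, $\ell\le k+1$, and ``$p\equiv\pm1\pmod\ell$ for some $p$ with $v_p(N)=2$'' into conclusion~2. You instead attempt to re-derive that clause, and there the details go wrong.

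Three concrete errors in your reducible-case sketch. First, the Eisenstein series congruent to~$f$ is $E=E_k^{\nu,\nu^{-1}}$ built from the character~$\nu$ lifting the tame part of~$\rhobarss$, \emph{not} from~$(\epsilon,\epsilon^{-1})$; the character~$\epsilon=(\nu^2)_0^{-1}$ enters only through $L(k,\nu^2)$ and the functional equation when one computes constant terms. Second, the relevant cusp is not~$\infty$: when $\condnu>1$ the constant term of~$E$ at~$\infty$ vanishes (Proposition~\ref{prop:constant_terms_E}), and the non-zero constant term---where $B_{k,\epsilon}/2k$ actually appears---sits at the cusps of denominator~$\condnu$. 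Third, conclusion~3(a) does not come from ``Hecke-eigenvalue congruences at $p\mid c$'' (for such~$p$ one has $a_p=0=a_p(E)$, which is vacuous); it comes from the Euler factor $\prod_{p\mid\condnu}(1-\epsilon^{-1}(p)p^{-k})$ in that constant term. You also omit the degeneracy case $\condnu\neq c$, which by Carayol--Livn\'e (Proposition~\ref{prop:Carayol_Livne}) forces $p\equiv\pm1\pmod\ell$ for some $p$ with $v_p(N)=2$ and is absorbed into conclusion~2. Once these points are fixed---or, more simply, once you invoke Theorem~\ref{thm:general_thm} directly---your argument goes through.
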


Apart from~(\ref{item:exceptional}) which is slightly different, the main idea in proving the results of the paper is to interpret situations~(\ref{item:reducible}) and~(\ref{item:dihedral}) above in terms of congruences between modular forms. In the case of reducible representations $\rhobar_{f,\lambda}$, the original form $f$ is then shown to be congruent modulo~$\ell$ to a suitable Eisenstein series whose construction depends on the weight and level. The theory of modular forms modulo~$\ell$ of Serre and Katz enables us to interpret this congruence as an equality. The desired bound then follows from a careful study of the constant term of these Eisenstein series at various cusps. Besides, in the case of dihedral projective image, the congruent modular form is a specific twist of the original form~$f$. In that case, the upper-bound follows from those of Sturm and Deligne.

Ghate and Parent recently addressed the question of whether the residual Galois representations attached to rational simple non-CM modular abelian varieties have ``uniform'' large images (see~\cite[Question~1.2]{GhPa12} for a precise statement). A positive answer to their question would follow from the existence of an upper-bound for exceptional primes in the weight $2$ case of Ribet's theorem  depending only on the degree~$[K:\Q]$ (and not on the level~$N$). While we are in contrary working with a fixed level, their work is still quite relevant for us.

The first section of the paper is devoted to classical facts about modular Galois representations and their local behaviors. The next three sections deal with cases~(\ref{item:reducible}), (\ref{item:dihedral}) and~(\ref{item:exceptional}) above respectively. Finally some numerical examples illustrating our results are presented in the last section. 

\medskip

\noindent{\em Acknowledgments. }The first named author is indebted to Mladen Dimitrov, Filippo Nuccio, Nick Ramsey and Panagiotis Tsaknias for helpful conversations. Gabor Wiese deserves special thanks for his constant support and advice as well as for invaluable comments and suggestions. Part of this work was done when N.B. was a postdoc at the Institut f\"ur Experimentelle Mathematik in Essen. He is grateful to his members for a pleasant and stimulative working environment.

    \section{Preliminaries}\label{s:preliminaries}
For simplicity, we shall write $\rho$ and $\rhobar$ for $\rho_{f,\lambda}$ and $\rhobar_{f,\lambda}$ respectively. We further denote by $\rhobarss$ the semi-simplification of~$\rhobar$. In this section, we also assume~$\ell\nmid N$.

    \subsection{Local decomposition at Steinberg primes}\label{ss:Steinberg}
Let $p$ be a prime dividing~$N$ exactly once. We shall write $p\| N$. Under this assumption, the $\ell$-adic repre\-sentation $\rho$ has a unique one-dimensional subspace unramified at~$p$ and the action of a Frobenius at~$p$ on it is given by multiplication by the Fourier coefficient~$a_p$.

We now give a description of $\rhobar_p$ which is defined to be the restriction of $\rhobar$ to a decomposition group $G_p$ at~$p$. For any $x\in \mathcal{O}$, let us denote by $\lambda(x)$ the unramified character of~$G_p$ that maps a Frobenius element to~$x\pmod{\lambda}$. Langlands has proved that (\cite[Prop.~2.8]{LoWe12})
\begin{equation}\label{eq:local_description_at_Steinberg_primes}
\rhobar_p\simeq \begin{pmatrix}
                \mu\cyclomod^{k/2-1} & \star \\
		0 & \mu\cyclomod^{k/2} \\
                \end{pmatrix}
\end{equation}
where $\mu=\lambda(a_p/p^{k/2-1})$ is quadratic since $a_p=\pm p^{k/2-1}$ (\cite[Th.~4.6.17]{Miy06}). In particular, if $\Frob_p\in G_p$ is a Frobenius element at~$p$, then the roots of the characteristic polynomial of $\rhobar(\Frob_p)$ are $a_p\pmod{\lambda}$ and $pa_p\pmod{\lambda}$.

    \subsection{Classification of degeneration cases}\label{ss:degeneration_cases}
Let $N(\rhobarss)$ be the Artin conductor of~$\rhobarss$. It was proved by Carayol that $N(\rhobarss)$ is a divisor of $N$ (\cite{Car86}). Moreover Carayol (\cite{Car89}) and Livn\'e (\cite{Liv89}) have (independently) classified the so-called degeneration cases, that is when $e_p\stackrel{\textrm{def}}{=}v_p(N)-v_p(N(\rhobarss))>0$ for some prime $p$. They proved that when $e_p>0$, we are in one of the situations described in the table below.
\begin{table}[h!]
\centering
\begin{tabular}{|c|c|c|c|}
  \hline
  $v_p(N)$ & $b+1\ge2$ & $1$ & $2$ \\
  \hline
  $v_p(N(\rhobarss))$ & $b\ge 1$ & $0$ & $0$ \\
  \hline
  $e_p$ & $1$ & $1$ & $2$ \\
  \hline
\end{tabular}
\caption{Classification of the degeneration cases}
\label{table:degeneration_cases}
\end{table}

It moreover follows from their classification that in the first and third cases, $p$ satisfies certain congruences modulo~$\ell$. Namely we have the following proposition.
\begin{prop}[Carayol-Livn\'e]\label{prop:Carayol_Livne}
Assume $e_p>0$ and $v_p(N)\ge 2$. Then we have $p\equiv\pm 1\pmod{\ell}$.
\end{prop}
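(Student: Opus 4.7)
The plan is to combine the local Langlands description of $\rho|_{G_p}$ with an analysis of how characters lose ramification modulo $\lambda$. Since $\ell \nmid N$ we have $\ell \neq p$, and the hypothesis $v_p(N) \geq 2$ together with the table restricts us to case~1 (with $b \geq 1$) or case~3. In both situations the local component $\pi_p$ of the automorphic representation attached to $f$ is either a ramified principal series $\pi(\chi_1, \chi_2)$, with $\chi_1 \chi_2$ unramified because the Nebentypus is trivial, or (only in case~3) a supercuspidal representation, and I would treat these two subcases in turn.

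First the principal series case. By local-global compatibility, $\rhobar|_{G_p}$ is the direct sum $\bar{\chi}_1 \oplus \bar{\chi}_2$ of the reductions viewed as characters of $G_p$ via local class field theory, and $v_p(\localcond)$ equals the sum of the conductors of the $\bar{\chi}_i$. The key observation is that if $a_i := \condnu(\chi_i) \geq 2$, then $\chi_i$ is nontrivial on the pro-$p$ quotient $(1 + p^{a_i - 1} \Z_p)/(1 + p^{a_i} \Z_p) \simeq \Z/p\Z$; since $\ell \neq p$, the primitive $p$-th roots of unity inject into $\Fellbar$, so $\bar{\chi}_i$ still has conductor $p^{a_i}$. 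Hence a drop in conductor can only come from some tamely ramified $\chi_i$ ($a_i = 1$) whose reduction is unramified. As $\chi_i|_{\Z_p^*}$ factors through $\F_p^*$, its order divides $p - 1$, so for $\bar{\chi}_i$ to be trivial one needs $\ell \mid p - 1$, i.e., $p \equiv 1 \pmod{\ell}$.

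Next the supercuspidal case, which for odd $p$ gives $\pi_p$ dihedral, corresponding to $\rho|_{G_p} \simeq \mathrm{Ind}_{G_K}^{G_p} \psi$ for some quadratic extension $K/\Q_p$ and a character $\psi$ of $G_K$ distinct from its conjugate. Since $v_p(N) = 2$, the conductor-discriminant formula forces $v_K(\condnu \psi) = 1$, i.e., $\psi$ is tamely ramified. The same pro-$p$ argument as above shows that $\bar{\psi}$ being unramified (as the drop to conductor $0$ forces after induction) means the restriction of $\psi$ to the residue field $k_K^*$ of $K$ becomes trivial modulo $\lambda$. Since $|k_K^*| = p^2 - 1$ when $K$ is unramified and $|k_K^*| = p - 1$ when $K$ is ramified, we conclude $\ell \mid p^2 - 1$ or $\ell \mid p - 1$ respectively, and in both cases $p \equiv \pm 1 \pmod{\ell}$. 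The remaining case $p = 2$ requires a slightly more delicate analysis of the supercuspidals of $\mathrm{GL}_2(\Q_2)$, or may be handled by invoking the explicit tables of~\cite{Car89} and~\cite{Liv89}.

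The main obstacle is the careful bookkeeping in the supercuspidal case: one must verify via the conductor-discriminant formula that the inducing character is necessarily tame, so that the sought-for divisibility condition is pinned down by the order of $\F_{p^2}^*$ or $\F_p^*$. Once that is in hand, the proposition follows by combining the two subcases.
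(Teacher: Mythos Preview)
The paper does not give a proof of this proposition at all: it is stated with attribution to Carayol and Livn\'e, and the reader is referred to~\cite{Car89} and~\cite{Liv89}. So there is no ``paper's proof'' to compare against; your sketch is genuinely new content relative to the text.

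Your strategy --- classify $\pi_p$ via local Langlands and use that pro-$p$ ramification survives reduction modulo~$\ell\neq p$, so only tame pieces can disappear --- is exactly the mechanism behind the Carayol--Livn\'e result, and your principal series and depth-zero supercuspidal computations are correct. There is, however, a bookkeeping slip in your case division. With trivial Nebentypus one has $a(\chi_1)=a(\chi_2)$, so a principal series contributes an \emph{even} conductor exponent and, when it degenerates, drops by~$2$; thus principal series can only account for case~3 of the table, never case~1. Conversely, a supercuspidal induced from the \emph{ramified} quadratic extension $K/\Q_p$ with $a_K(\psi)=1$ has $v_p(N)=2$ but, when $\bar\psi$ becomes unramified, drops only to conductor~$1$ (the quadratic character $\epsilon_{K/\Q_p}$ is still ramified), landing in case~1 with $b=1$ --- not case~3 as you assert. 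Your own argument still yields $\ell\mid p-1$ there, so the conclusion survives, but the sentence ``(only in case~3) a supercuspidal'' is backwards.

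A second point worth making explicit: your pro-$p$ argument, applied to the inducing character~$\psi$ rather than to~$\chi_i$, shows that for $p$ odd a supercuspidal with $a_K(\psi)\ge 2$ cannot degenerate at all. Combined with the principal series observation, this means that for odd~$p$ degeneration with $v_p(N)\ge 2$ forces $v_p(N)=2$; the entries of the table with $v_p(N)\ge 3$ are populated only by $p=2$ (extraordinary supercuspidals and wildly ramified quadratic extensions), which you rightly defer to the original references.
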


    \subsection{Local description at $\ell$}\label{ss:local_description_at_ell}
Assume $2\le k\le \ell+1$. Let $G_{\ell}$ be a decomposition group at~$\ell$ and $I_{\ell}$ its inertia subgroup. Then Deligne and Fontaine (\cite{Edi92}) have respectively proved that 
\begin{itemize}
\item if $f$ is ordinary at~$\lambda$ (that is if $a_{\ell}\not\equiv0\pmod{\lambda}$), then $\rhobar_{|G_{\ell}}$ is reducible and 
\begin{equation*}
\rhobar_{|I_{\ell}}\simeq\begin{pmatrix}
                         \cyclomod^{k-1} & \star \\
			    0 & 1\\
                         \end{pmatrix};
\end{equation*}
\item if $f$ is not ordinary at~$\lambda$, then $\rhobar_{|G_{\ell}}$ is irreducible and 
\begin{equation*}
\rhobar_{|I_{\ell}}\simeq\begin{pmatrix}
                         \psi^{k-1} & 0 \\
			    0 & \psi'^{k-1}\\
                         \end{pmatrix}
\end{equation*}
where $\{\psi,\psi'\}=\{\psi,\psi^{\ell}\}$ is the set of fundamental characters of level~$2$ (\emph{loc. cit.}, \S2.4).
\end{itemize}
The following lemma is immediate.
\begin{lemma}\label{lemma:cardinality_image}
Assume $\ell>k$. 
\begin{enumerate}
\item The image of $\cyclomod^{k-1}$ is cyclic of order $n=(\ell-1)/\gcd(\ell-1,k-1)\ge 2$. In particular, we have $n=2$ if and only if $\ell=2k-1$. Moreover, if $\ell>4k-3$, then $n>5$.
\item The image of $\psi^{(\ell-1)(k-1)}$ is cyclic of order $m=(\ell+1)/\gcd(\ell+1,k-1)\ge 2$. In particular, we have $m=2$ if and only if $\ell=2k-3$. Moreover, if $\ell>4k-5$, then $m>5$.
\end{enumerate}
\end{lemma}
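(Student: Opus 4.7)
The plan is to reduce both statements to elementary arithmetic about orders of characters taking values in $\Flambda^\times$ or its quadratic extension $\F_{\ell^2}^\times$, then extract every subclaim by a gcd computation.

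\emph{Part (1).} The mod-$\ell$ cyclotomic character $\cyclomod$ is surjective onto $\F_\ell^\times$, which is cyclic of order $\ell-1$. Hence $\cyclomod^{k-1}$ has image of exact order $n = (\ell-1)/\gcd(\ell-1,\,k-1)$. The inequality $n \ge 2$ follows from $\ell > k$, which forces $\gcd(\ell-1,k-1) \le k-1 < \ell-1$. For the equality case, $n = 2$ requires $\gcd(\ell-1,k-1) = (\ell-1)/2$; being simultaneously a divisor of $k-1$ and bounded above by $k-1$, this gcd must equal $k-1$, giving $\ell = 2k-1$; the converse is immediate. For the strict bound, $n \ge (\ell-1)/(k-1) > 4$ whenever $\ell > 4k-3$, so $n \ge 5$, and a short divisor argument eliminating the case $n = 5$ upgrades this to $n > 5$.

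\emph{Part (2).} The fundamental character $\psi$ of level~$2$ has image cyclic of order $\ell^2 - 1 = (\ell-1)(\ell+1)$. Using the factorization
\[
\gcd\bigl((\ell-1)(\ell+1),\,(\ell-1)(k-1)\bigr) = (\ell-1)\,\gcd(\ell+1,\,k-1),
\]
we see that $\psi^{(\ell-1)(k-1)}$ has exact order $m = (\ell+1)/\gcd(\ell+1,\,k-1)$. The three subclaims for $m$ then follow word-for-word from Part~(1) with $\ell-1$, $2k-1$, $4k-3$ replaced by $\ell+1$, $2k-3$, $4k-5$ respectively.

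\emph{Main obstacle.} The argument is pure divisibility; no deep input is needed. The only care required is the final step that $n$ (resp.~$m$) strictly exceeds $5$: the crude inequality $n \ge (\ell-1)/(k-1)$ by itself yields only $n \ge 5$ under the stated hypothesis, and one must separately verify that the divisor $(\ell-1)/5$ of $\ell-1$ cannot equal $\gcd(\ell-1,k-1)$ in the allowed range (and similarly for $m$). This is a short but essential bookkeeping step.
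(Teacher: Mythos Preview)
Your outline is sound, and the paper itself offers no proof beyond declaring the lemma ``immediate''. The one place that needs more than you have written is exactly the step you single out as the main obstacle: ruling out $n=5$ (and symmetrically $m=5$).

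The divisor bookkeeping alone does not finish this. If $n=5$ and $\ell>4k-3$, then $(\ell-1)/5=\gcd(\ell-1,k-1)$ divides $k-1$; combined with $(\ell-1)/5>4(k-1)/5$ this forces $(\ell-1)/5=k-1$, i.e.\ $\ell=5k-4$. That is a genuine solution in the range $\ell>4k-3$, so your claim that $(\ell-1)/5$ ``cannot equal $\gcd(\ell-1,k-1)$ in the allowed range'' is not correct as a statement about integers. What rescues the lemma is the paper's standing hypothesis that $k$ is even: then $5k-4\ge 6$ is even, hence not prime, contradicting the primality of~$\ell$. The same parity argument disposes of $m=5$, which would force $\ell=5k-6$, again even. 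Without the parity of~$k$ the conclusion is actually false: for $k=3$ and $\ell=11$ one has $\ell>4k-3=9$ yet $n=10/\gcd(10,2)=5$.

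A minor wording point in your $n=2$ step: ``bounded above by $k-1$'' is redundant, since any divisor of $k-1$ already is. What you need is the \emph{lower} bound $(\ell-1)/2>(k-1)/2$ coming from $\ell>k$; a divisor of $k-1$ exceeding $(k-1)/2$ must equal $k-1$, and then $\ell=2k-1$ follows.
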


    \section{Reducible representations}

    \subsection{Preliminaries: Gauss sums and Bernoulli numbers}\label{ss:Gauss_Bernoulli}

Let $\psi:(\Z/f\Z)^{\times}\rightarrow\C^{\times}$ be a primitive Dirichlet character of modulus~$f\ge 1$. The Gauss sum attached to~$\psi$ is defined by
\begin{equation*}
W(\psi)=\sum_{n=1}^{f}\psi(n)e^{2i\pi n/f}.
\end{equation*}
\begin{lemma}\label{lem:Gauss_sum}
We have $\left|W(\psi)\right|=\sqrt{f}$. Moreover, as an algebraic integer, the norm of $W(\psi)$ is a power of~$f$.
\end{lemma}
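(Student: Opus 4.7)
For the first statement, the equality $|W(\psi)|=\sqrt{f}$ is the classical orthogonality computation for Gauss sums of primitive characters. The plan is to expand
\begin{equation*}
|W(\psi)|^2=W(\psi)\overline{W(\psi)}=\sum_{m,n=1}^{f}\psi(n)\overline{\psi(m)}\,e^{2i\pi(n-m)/f}
\end{equation*}
and apply the identity $\sum_{n\bmod f}\psi(n)\,e^{2i\pi an/f}=\overline{\psi(a)}\,W(\psi)$, which holds for every integer $a$ thanks to the primitivity of $\psi$ (the delicate case being $\gcd(a,f)>1$, where both sides must vanish precisely because $\psi$ does not factor through any proper divisor of $f$). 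A direct rearrangement then yields $|W(\psi)|^2=f$.

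For the second statement, I would proceed by Galois theory. Observe first that $W(\psi)$ lies in the cyclotomic field $L=\Q(\zeta_m)$ with $m=\lcm(f,\mathrm{ord}(\psi))$, and is an algebraic integer as a $\Z$-linear combination of roots of unity. For an element $\sigma_c\in\Gal(L/\Q)$ acting by $\zeta_m\mapsto\zeta_m^c$, a reindexing computation gives
\begin{equation*}
\sigma_c(W(\psi))=\psi^b(a)^{-1}\,W(\psi^b),
\end{equation*}
where $a\equiv c\pmod{f}$ and $b\equiv c\pmod{\mathrm{ord}(\psi)}$. Since $\gcd(b,\mathrm{ord}(\psi))=1$, the character $\psi^b$ is still primitive of conductor $f$, hence part~1 applied to $\psi^b$ yields $|\sigma_c(W(\psi))|=\sqrt{f}$ for every $\sigma_c$.

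Taking the product of the $\sigma_c(W(\psi))$ over $\Gal(L/\Q)$ then gives $|N_{L/\Q}(W(\psi))|=f^{[L:\Q]/2}$, and since the norm is a rational integer it follows that its only prime divisors are those of $f$. A more direct route avoiding any discussion of parity of $[L:\Q]$ would start from the identity $W(\psi)\overline{W(\psi)}=f$ combined with $\overline{W(\psi)}=\psi(-1)\,W(\overline{\psi})$ (obtained by substituting $n\mapsto -n$ in the definition of $\overline{W(\psi)}$); this yields $W(\psi)W(\overline{\psi})=\psi(-1)\,f$ as an identity in $\mathcal{O}_L$, and taking norms down to $\Q$ while noting that $W(\overline{\psi})$ and $W(\psi)$ are Galois conjugates shows at once that $N_{L/\Q}(W(\psi))^2$ is, up to sign, a power of $f$, hence so is $N_{L/\Q}(W(\psi))$ itself. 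The main technical point is really the Galois action computation above, which is a careful reindexing that must separately track the effect of $\sigma_c$ on $\zeta_f^n$ and on the character value $\psi(n)$; everything else is formal.
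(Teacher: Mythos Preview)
Your proposal is correct and follows essentially the same route as the paper. The paper cites Miyake for $|W(\psi)|=\sqrt{f}$ and then, for an arbitrary $\Qbar$-automorphism $\sigma$ with $\sigma(e^{2i\pi/f})=e^{2i\pi m/f}$, writes $\sigma(W(\psi))=\overline{\psi^{\sigma}}(m)\,W(\psi^{\sigma})$ and concludes $|\sigma(W(\psi))|=\sqrt{f}$; this is exactly your reindexing computation in different notation (their $\psi^{\sigma}$ is your $\psi^{b}$, their $m$ is your $a$), and your version is simply more explicit about the ambient cyclotomic field and the passage from ``all conjugates have absolute value $\sqrt{f}$'' to ``the norm has only prime factors dividing $f$''.
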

\begin{proof}
The first part of the lemma is~\cite[Lem.~3.1.1]{Miy06}. Let $\sigma$ be a $\Qbar$-automorphism and $m\in\Z$ such that~$\sigma(e^{2i\pi/f})=e^{2i\pi m/f}$. Then, by~\emph{loc. cit.}, we have~:
\begin{equation*}
\sigma(W(\psi))=\sum_{n=1}^f\psi^{\sigma}(n)e^{2i\pi n m/f}=\overline{\psi^{\sigma}}(m)W(\psi^{\sigma})
\end{equation*}
and thus $\left|\sigma(W(\psi))\right|=\left|W(\psi^{\sigma})\right|=\sqrt{f}$. This completes the proof of the lemma.
\end{proof}
The Bernoulli numbers attached to~$\psi$ are defined by~:
\begin{equation*}
\sum_{n=1}^f\psi(n)\frac{te^{nt}}{e^{ft}-1}=\sum_{m\ge0}B_{m,\psi}\frac{t^m}{m!}.
\end{equation*}
In particular, if $\psi$ is the trivial character, $B_{m,\psi}$ is the classical Bernoulli number~$B_m$, except when $m=1$ in which case $B_{1,\psi}=-B_1=1/2$. The following proposition is a well-known result of van Staudt-Clausen.
\begin{prop}[van Staudt-Clausen]\label{prop:bernoulli_numbers}
Let $m\ge 2$ be an even integer. The denominator of~$B_{m}$ is $\displaystyle{\prod_{p-1\mid m}p}$ where the product runs over the primes~$p$ such that $p-1$ divides~$m$.
\end{prop}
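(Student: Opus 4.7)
The proof rests on the congruence
\begin{equation*}
pB_m \equiv \sum_{a=1}^{p-1} a^m \pmod{p\Z_{(p)}}
\end{equation*}
valid for every prime~$p$ and every even integer $m\ge 2$, where $\Z_{(p)}$ denotes the localization of~$\Z$ at~$p$. This is the fulcrum of van~Staudt--Clausen: granting it, the proposition follows quickly from Fermat's little theorem and the properties of primitive roots modulo~$p$.

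I would establish the congruence by starting from Faulhaber's formula
\begin{equation*}
\sum_{a=0}^{p-1}a^m=\frac{1}{m+1}\sum_{j=0}^{m}\binom{m+1}{j}B_j\,p^{m+1-j},
\end{equation*}
isolating the contribution from $j=m$, which equals exactly $pB_m$, and showing by induction on~$m$ that every remaining summand lies in $p\Z_{(p)}$. The inductive hypothesis supplies $v_p(B_j)\ge -1$ for $j<m$ (recalling that $B_j=0$ for odd $j>1$), while the exponent $m+1-j\ge 2$ on~$p$ combined with a Kummer-style estimate on $v_p\bigl(\binom{m+1}{j}\bigr)$ absorbs the denominator $(m+1)^{-1}$. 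This book-keeping is the main technical obstacle: one must verify for each prime $p$ and each $j<m$ that
\[
v_p\!\left(\binom{m+1}{j}\right)-v_p(m+1)+v_p(B_j)+(m+1-j)\ge 1,
\]
which follows from Legendre's formula after a short case split on whether $p\mid m+1$.

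Once the congruence is in hand, the proof of the proposition is purely elementary. If $p-1\mid m$, Fermat gives $a^m\equiv 1\pmod p$ for $a\in\{1,\ldots,p-1\}$, so the power sum is $\equiv-1\pmod p$, making $pB_m$ a $p$-adic unit and forcing $v_p(B_m)=-1$. If $p-1\nmid m$, a primitive root~$g$ modulo~$p$ turns the sum into the non-trivial geometric progression $\sum_{k=0}^{p-2}g^{km}$, whose common ratio $g^m$ is not $\equiv 1\pmod p$, so the sum vanishes modulo~$p$; hence $v_p(B_m)\ge 0$. Putting the two cases together yields $v_p(B_m)=-1$ if and only if $p-1\mid m$, which is exactly the claimed formula for the denominator of~$B_m$.
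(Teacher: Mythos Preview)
The paper does not prove this proposition; it is simply quoted as ``a well-known result of van Staudt--Clausen'' and used as a black box. Your sketch is the standard textbook proof (essentially the one in Borevich--Shafarevich or Ireland--Rosen) and is correct.

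If you want to tighten the one step you flag as hand-wavy---the displayed $p$-adic inequality---the cleanest route is not a case split on whether $p\mid m+1$ but the identity $\frac{1}{m+1}\binom{m+1}{j}=\frac{1}{m+1-j}\binom{m}{j}$, which turns each remainder term into $\binom{m}{j}B_j\,p^{n}/n$ with $n=m+1-j\ge 2$. Since $\binom{m}{j}\in\Z$ and $v_p(p^{n}/n)=n-v_p(n)\ge 2$ for every prime $p$ and every $n\ge 2$ except $(p,n)=(2,2)$, the inductive bound $v_p(B_j)\ge -1$ immediately gives membership in $p\Z_{(p)}$; the lone exception corresponds to $j=m-1$, which is odd, so $B_j=0$ (or, when $m=2$, to $j=1$, which one checks directly).
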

The Bernoulli numbers are also related to certain special values of the $L$-function $L(s,\psi)$ attached to~$\psi$. More precisely, we have the following proposition (\cite[Ch.~4]{Was97}).
\begin{prop}\label{prop:FE_for_L_functions}
Assume $\psi$ to be even. Let $m\ge 2$ be an even integer. Then, we have
\begin{equation*}
L(m,\psi)=-W(\psi)\frac{C_m}{f^m}\cdot\frac{B_{m,\psi^{-1}}}{2m}\not=0,\quad\textrm{where }C_m=\frac{(2i\pi)^m}{(m-1)!}.
\end{equation*}
\end{prop}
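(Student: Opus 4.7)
The statement is a direct combination of two classical facts about primitive Dirichlet $L$-functions, both treated in detail in Washington's book. The first ingredient is the evaluation of $L$ at non-positive integers via generalized Bernoulli numbers: for any primitive Dirichlet character $\chi$ and integer $m\ge 1$,
\begin{equation*}
L(1-m,\chi)=-\frac{B_{m,\chi}}{m}.
\end{equation*}
The second ingredient is the functional equation of $L(s,\psi)$ in its asymmetric form: when $\psi$ is primitive and even of conductor $f$, one has
\begin{equation*}
L(1-s,\psi^{-1})=\frac{2f^{s-1}\Gamma(s)\cos(\pi s/2)}{(2\pi)^s}W(\psi^{-1})L(s,\psi).
\end{equation*}

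First I would specialize this functional equation at $s=m$ with $m\ge 2$ even, so that $\Gamma(m)=(m-1)!$ and $\cos(\pi m/2)=(-1)^{m/2}$. By the first ingredient applied to $\chi=\psi^{-1}$, the left-hand side equals $-B_{m,\psi^{-1}}/m$. Solving for $L(m,\psi)$ then yields
\begin{equation*}
L(m,\psi)=-\frac{B_{m,\psi^{-1}}}{2m}\cdot\frac{(-1)^{m/2}(2\pi)^m}{f^{m-1}(m-1)!\,W(\psi^{-1})}.
\end{equation*}

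Next I would eliminate $W(\psi^{-1})$ by means of the standard identity $W(\psi)W(\psi^{-1})=\psi(-1)f$, which gives $W(\psi^{-1})=f/W(\psi)$ since $\psi$ is even. After substitution, the leftover factor of $f$ combines with $f^{m-1}$ to produce $f^m$ in the denominator. Finally, since $m$ is even, $i^m=(-1)^{m/2}$, so $(-1)^{m/2}(2\pi)^m=(2i\pi)^m$, which delivers the formula with $C_m=(2i\pi)^m/(m-1)!$. Non-vanishing is immediate from the absolutely convergent Euler product of $L(s,\psi)$ on the half-plane $\mathrm{Re}(s)>1$.

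There is no conceptual obstacle here: everything reduces to quoting the two standard black boxes above. The only care required is bookkeeping around signs, factors of $i$, the symmetric versus asymmetric form of the functional equation, and the normalization relating $W(\psi^{-1})$ to $W(\psi)$; all of these are straightforward given that $\psi$ is even and $m$ is an even integer $\ge 2$.
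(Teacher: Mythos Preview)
Your argument is correct. The paper does not actually give a proof of this proposition; it simply cites \cite[Ch.~4]{Was97}, and your proposal is precisely a correct unfolding of that reference (the identity $L(1-m,\chi)=-B_{m,\chi}/m$ combined with the functional equation), so the approaches coincide.
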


    \subsection{Statement of the results}

\begin{thm}\label{thm:val_at_2}
Assume $\rhobar_{f,\lambda}$ to be reducible. If $v_2(N)=2$ or $v_2(N)\ge 3$ is odd, then either $\ell$ divides $N$, or $\ell< k-1$, or $\ell=3$.
\end{thm}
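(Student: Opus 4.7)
\medskip

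\noindent\textbf{Proof proposal.} The plan is to combine the classification of degeneration cases at~$p=2$ (Table~\ref{table:degeneration_cases} and Proposition~\ref{prop:Carayol_Livne}) with a parity argument for $\val[2](\localcond)$ forced by the determinant relation $\det\rhobar=\cyclomod^{k-1}$.

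First I would reduce to the case $\ell\nmid N$, which is already one of the alternatives; since $\val[2](N)\ge 2$ this simultaneously forces $\ell$ to be odd. Writing $\rhobarss\simeq\chi_1\oplus\chi_2$, the determinant relation gives $\chi_1\chi_2=\cyclomod^{k-1}$. Because $\ell\ne 2$, the mod-$\ell$ cyclotomic character is unramified at~$2$, so the restrictions of $\chi_1$ and $\chi_2$ to the inertia subgroup $I_2$ are inverses of each other. In particular $\chi_1$ and $\chi_2$ have the same conductor exponent $c$ at~$2$, giving $\val[2](\localcond)=2c$.

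The key step is to rule out $c=1$. A continuous character of $G_2$ with values in $\Flambda^{\times}$ factors, by local class field theory, through $\Q_2^{\times}$; on inertia it is a character of $\Z_2^{\times}$. Since $\Z_2^{\times}=1+2\Z_2$, the zeroth and first higher unit groups already coincide at $p=2$, so the smallest non-zero conductor exponent is~$2$. Hence $c\in\{0,2,3,4,\ldots\}$ and $\val[2](\localcond)\in\{0,4,6,8,\ldots\}$. Under either hypothesis $\val[2](N)=2$ or $\val[2](N)\ge 3$ odd, comparing with the bound $\val[2](\localcond)\le\val[2](N)$ forces a strict inequality: if $\val[2](N)=2$ then the only admissible value is~$0$, and if $\val[2](N)$ is odd then the largest admissible even value is $\val[2](N)-1$. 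Consequently $e_2>0$, and since $\val[2](N)\ge 2$, Proposition~\ref{prop:Carayol_Livne} applies and yields $2\equiv\pm 1\pmod{\ell}$, i.e.\ $\ell\mid 3$. As $\ell$ is odd this gives $\ell=3$.

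The only mild subtlety is justifying that conductor exponent~$1$ at $p=2$ is genuinely impossible, which hinges on $\ell\ne 2$; this is precisely what the reduction $\ell\nmid N$ combined with $\val[2](N)\ge 2$ delivers. I note that this approach does not seem to need the auxiliary alternative $\ell<k-1$ in the statement; its presence presumably reflects a uniform formulation shared with the other reducibility theorems of the paper, which may argue through Eisenstein congruences rather than through the conductor.
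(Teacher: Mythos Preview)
Your argument is correct, and the overall strategy---force $e_2>0$ and then invoke Proposition~\ref{prop:Carayol_Livne} at $p=2$---is exactly the paper's. The execution differs in one point worth recording. To see that $v_2(\localcond)$ is even (and $\not=2$), the paper first invokes the Deligne--Fontaine description at~$\ell$ (\S\ref{ss:local_description_at_ell}) under the extra hypothesis $\ell\ge k-1$ to obtain the specific global shape $\rhobarss\simeq\nubar\oplus\nubar^{-1}\cyclomod^{k-1}$, whence $\localcond=\condnu^2$ is a perfect square and $\condnu$ is the conductor of a Dirichlet character (so $v_2(\condnu)\neq 1$). You instead argue purely locally at~$2$: the determinant relation $\chi_1\chi_2=\cyclomod^{k-1}$, together with $\ell\neq 2$, makes $\chi_1|_{I_2}$ and $\chi_2|_{I_2}$ mutually inverse, hence of equal conductor exponent, and the equality $U_0(\Q_2)=U_1(\Q_2)$ rules out exponent~$1$. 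This is cleaner for the present statement and, as you note, makes the alternative $\ell<k-1$ superfluous; that clause is there because the paper proves Theorems~\ref{thm:val_at_2} and~\ref{thm:general_thm} in one breath via the decomposition~(\ref{eq:Faltings_Jordan_decomposition}), which is genuinely needed for the latter.
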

Put $c=\max\{d\ge 1;\ d^2\mid N\}$. The following result is a generalization of Ribet's \cite[Lem.~5.2]{Rib75} on the level~$1$ case to higher levels.
\begin{thm}[main result]\label{thm:general_thm}
Assume $\rhobar_{f,\lambda}$ to be reducible. Then one of the following assertions holds~:
\begin{enumerate}
\item the prime $\ell$ divides $N$ or $\ell< k-1$;
\item the level $N$ is a not square and there exists an even Dirichlet character $\eta:(\Z/c\Z)^{\times}\rightarrow\C^{\times}$ such that for every prime $p$ dividing~$N$ with odd valuation $v_p(N)$, we have
\begin{enumerate}
\item either $v_p(N)\ge3$ and $p\equiv\pm1\pmod{\ell}$;
\item or $v_p(N)=1$ and $\ell$ divides the norm of either $p^{k}-\eta(p)$, or $p^{k-2}-\eta(p)$.
\end{enumerate}
\item the level $N$ is a square (i.e. $N=c^2$) and one of the following holds~:
\begin{enumerate}
\item either there exists a prime $p$ such that $v_p(N)=2$ and $p\equiv \pm1\pmod{\ell}$;
\item or there exists a primitive Dirichlet character~$\nu:(\Z/c\Z)^{\times}\rightarrow\C^{\times}$ such that for $\ell>k+1$ we have~:
\begin{enumerate}
\item either $\ell$ divides the norm of $p^k-\epsilon^{-1}(p)$ for some prime $p\mid c$;
\item or $\ell$ divides the numerator of the norm of $B_{k,\epsilon}/2k$
\end{enumerate}
where $c_0$ divides $c$ and $\epsilon:(\Z/c_0\Z)^{\times}\rightarrow\C^{\times}$ is the inverse of the primitive Dirichlet character attached to~$\nu^2$.
\end{enumerate}
\end{enumerate}
\end{thm}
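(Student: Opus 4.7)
The plan is to reduce to the case where $\rhobar:=\rhobar_{f,\lambda}$ is ordinary and semisimple, lift the mod~$\ell$ characters appearing in~$\rhobarss$ to a characteristic-zero Dirichlet character, construct from it an Eisenstein series congruent to~$f$ modulo~$\lambda$, and then translate the cuspidality of~$f$ into divisibility conditions on~$\ell$ coming from the constant terms of the Eisenstein series at every cusp of~$X_{0}(N)$.

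Assume $\ell\nmid N$ and $\ell\ge k-1$, so item~(1) fails. By Section~\ref{ss:local_description_at_ell} the non-ordinary case forces an irreducible restriction to a decomposition group at~$\ell$, hence $\rhobarss\simeq\alpha^{-1}\cyclomod^{k-1}\oplus\alpha$ for a character $\alpha\colon\GQ\to\Fellbar^{\times}$ unramified at~$\ell$, automatically satisfying $\det\rhobar=\cyclomod^{k-1}$. Its conductor~$c_{\alpha}$ satisfies $c_{\alpha}^{2}\mid N$, hence $c_{\alpha}\mid c$. I then analyze each prime $p\mid N$. If $v_p(N)=1$, Section~\ref{ss:Steinberg} gives eigenvalues of $\rhobar(\Frob_p)$ equal to $a_p\equiv\pm p^{k/2-1}$ and $pa_p\equiv\pm p^{k/2}\pmod{\lambda}$; matching these in either order with $\alpha^{-1}(p)p^{k-1}$ and $\alpha(p)$ yields divisibility of one of $p^{k}-\alpha(p)^{2}$ or $p^{k-2}-\alpha(p)^{2}$. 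If $v_p(N)\ge 2$ with $v_p(c_{\alpha})<v_p(N)/2$---which holds automatically whenever $v_p(N)$ is odd, and includes the case $v_p(N)=2$, $v_p(c_{\alpha})=0$---Proposition~\ref{prop:Carayol_Livne} gives $p\equiv\pm 1\pmod{\ell}$.

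These inputs yield items~(2) and~(3a) directly. When $N$ is not a square, some $v_p(N)$ is odd and one obtains item~(2) with $\eta$ any lift of the (automatically even) character~$\alpha^{2}$ to a Dirichlet character of modulus~$c$. When $N=c^{2}$ and some prime has $e_p=2$, one obtains item~(3a). In the remaining case, $N=c^{2}$ and $v_p(c_{\alpha})=v_p(c)$ for every $p\mid N$, so $c_{\alpha}=c$ and $\alpha$ has exact conductor~$c$. Lift $\alpha$ to a primitive Dirichlet character $\nu\colon(\Z/c\Z)^{\times}\to\C^{\times}$ via the Teichm\"uller section, and let $\epsilon$ be the inverse of the primitive character attached to~$\nu^{2}$. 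Form the Eisenstein series
\begin{equation*}
E_{k,\nu,\nu^{-1}}=C_{0}+\sum_{n\ge 1}\Bigl(\sum_{d\mid n}\nu(n/d)\nu^{-1}(d)d^{k-1}\Bigr)q^{n},
\end{equation*}
whose Hecke eigenvalues away from~$N\ell$ agree with those of~$f$ modulo~$\lambda$. Sturm's bound together with the theory of mod~$\ell$ modular forms of Serre and Katz then promotes this to a full congruence $f\equiv E_{k,\nu,\nu^{-1}}\pmod{\lambda}$. Since $f$ is cuspidal, the constant terms of $E_{k,\nu,\nu^{-1}}$ at every cusp of $X_{0}(c^{2})$ must vanish modulo~$\lambda$. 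Using Proposition~\ref{prop:FE_for_L_functions} and Lemma~\ref{lem:Gauss_sum}, the constant term at~$\infty$ reduces, up to a Gauss-sum factor whose norm is a power of~$c$ (and hence prime to~$\ell$), to $B_{k,\epsilon}/2k$, yielding item~(3b)(ii); the constant terms at cusps lying above each $p\mid c$ pick up factors $p^{k}-\epsilon^{-1}(p)$, yielding item~(3b)(i).

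The main obstacle will be the cusp-by-cusp computation of the constant terms in the square-level case: one must carefully track how $E_{k,\nu,\nu^{-1}}$ descends from a form on~$\Gamma_{1}(c^{2})$ to one of trivial Nebentypus on~$\Gamma_{0}(c^{2})$, how it is old at each prime $p\mid c$, and how the relevant Atkin--Lehner-type operators act on its Fourier expansions at each cusp, in order to isolate precisely the factors $p^{k}-\epsilon^{-1}(p)$ and the $L$-value $B_{k,\epsilon}/2k$ claimed in the statement; Lemma~\ref{lem:Gauss_sum} is what ensures that the Gauss-sum prefactor does not absorb the divisibility once one reduces modulo~$\lambda$.
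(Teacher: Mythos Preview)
Your overall strategy matches the paper's: split into characters using the local description at~$\ell$, handle Steinberg primes via the local decomposition at~$p$, invoke Carayol--Livn\'e for the degeneration cases, and in the remaining square-level case build an Eisenstein series congruent to~$f$ and exploit cuspidality. Two points, however, need correction.

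First, your account of which cusps produce which divisibility is wrong. For the Eisenstein series you write down (the same one as in Section~\ref{ss:E}), the constant term at~$\infty$ is \emph{zero} whenever $c>1$: see the definition, where $C_0=-\vartheta(\condnu)B_k/2k$ vanishes unless $\condnu=1$. The nonzero constant terms occur only at cusps of the form $u/\condnu$, and there (Proposition~\ref{prop:constant_terms_E}) one finds a \emph{single} expression
\[
\Upsilon=\pm\left(\frac{\condnu}{c_0}\right)^{k}\frac{W(\epsilon^{-1})}{W(\nu)}\cdot\frac{B_{k,\epsilon}}{2k}\cdot\prod_{p\mid\condnu}\bigl(1-\epsilon^{-1}(p)p^{-k}\bigr),
\]
so the Bernoulli factor and the Euler factors $p^{k}-\epsilon^{-1}(p)$ appear together in the same constant term. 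The disjunction in item~(3b) arises from factoring this one quantity, not from reading off different cusps. Your description (``the constant term at~$\infty$ gives $B_{k,\epsilon}/2k$; other cusps give the Euler factors'') would not survive the actual computation.

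Second, the passage from ``Hecke eigenvalues agree away from~$\ell$'' to ``$\fbar=\Ebar$'' is not a Sturm-bound argument. The paper (and you should too) uses Katz's operator~$\Theta$: the congruence at all primes $r\ne\ell$ gives $\Theta(\fbar)=\Theta(\Ebar)$, and Katz's injectivity of~$\Theta$ for $\ell>k+1$ then yields $\fbar=\Ebar$. Sturm alone does not handle the coefficient at~$\ell$, and this is exactly where the hypothesis $\ell>k+1$ in item~(3b) enters.
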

Note that these two results give an effective bound for~$\ell$ in terms of~$N$ and~$k$ unless $k=2$ and $N=p_1\cdots p_tc^2$ where $p_1,\ldots,p_t$ are $t\ge 1$ distinct primes not dividing~$c$, and $c$ is odd or divisible by~$4$. In the square-free level case (namely when $c=1$), we however have the following theorem whose first part is an immediate corollary of Thm.~\ref{thm:general_thm} and whose second part follows from a generalization of a result of Mazur on the weight $2$ and prime level case (cf.~\cite{Maz77} and \cite[Prop.~1]{MaSe76}).
\begin{thm}[square-free level case]\label{thm:square_free_level_case}
Assume $\rhobar_{f,\lambda}$ reducible and $N=p_1\cdots p_t$ where $p_1,\ldots,p_t$ are $t\ge 1$ distinct primes.
\begin{enumerate}
\item If $k>2$, then one of the following assertions holds~:
\begin{enumerate}
\item either $\ell$ divides $N$ or $\ell<k-1$;
\item $\ell$ divides the following non-zero integer 
\begin{equation*}
\gcd\left(\lcm\left(p_i^k-1,p_i^{k-2}-1\right),1\le i\le t\right).
\end{equation*}
\end{enumerate}
\item If $k=2$ and $\ell\nmid 6N$, then the following assertions hold~:
\begin{enumerate}
\item\label{item:congruence_weight_2} for any $1\le i\le t$ with $a_{p_i}=-1$, we have $p_i\equiv -1\pmod{\ell}$;
\item we have $(a_{p_1},\ldots,a_{p_t})\not=(-1,\ldots,-1)$;
\item if $(a_{p_1},\ldots,a_{p_t})=(+1,\ldots,+1)$, then $\ell$ divides the non-zero integer $\prod_{i=1}^t\left(p_i-1\right)$.
\end{enumerate}
\end{enumerate}
\end{thm}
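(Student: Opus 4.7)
\textbf{Part (1)} is an immediate corollary of Theorem~\ref{thm:general_thm}. Since $N=p_1\cdots p_t$ is square-free, we have $c=1$, so the Dirichlet character $\eta$ in case~2 of that theorem is trivial. As $N$ is not a square and $v_{p_i}(N)=1$ for every $i$, subcase~2(b) yields $\ell\mid p_i^k-1$ or $\ell\mid p_i^{k-2}-1$ for each~$i$; hence $\ell\mid\lcm(p_i^k-1,p_i^{k-2}-1)$ for every~$i$, so $\ell$ divides the gcd of these integers over~$i$, which is nonzero since $k>2$ ensures $p_i^{k-2}-1>0$.

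\textbf{Part (2)} assumes $k=2$ and $\ell\nmid 6N$, and generalizes Mazur's treatment of the prime-level case in~\cite{Maz77,MaSe76}. Write $\rhobarss=\theta_1\oplus\theta_2$ with $\theta_1\theta_2=\cyclomod$ (the determinant for $k=2$ and trivial Nebentypus). The Steinberg description in~\S\ref{ss:Steinberg} shows that both $\theta_j$ are unramified at each $p_i\| N$: indeed $\mu_i=\lambda(a_{p_i})$ is unramified and $\cyclomod|_{G_{p_i}}$ is unramified as $p_i\ne\ell$. Reducibility of $\rhobar|_{G_\ell}$ (inherited globally) forces $f$ to be ordinary at~$\lambda$ (the non-ordinary case of~\S\ref{ss:local_description_at_ell} yields irreducibility), so $\{\theta_1|_{I_\ell},\theta_2|_{I_\ell}\}=\{\trivial,\cyclomod|_{I_\ell}\}$. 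Up to relabeling, $\theta_1$ is unramified at every prime, hence trivial by class field theory of~$\Q$, and $\theta_2=\cyclomod$. Assertion~(a) now follows by comparing the two expressions for the eigenvalues of $\rhobar(\Frob_{p_i})$: from $\rhobarss$ they are $\{1,p_i\}\pmod\ell$, while from the Steinberg description they are $\{a_{p_i},p_ia_{p_i}\}\pmod\ell$; hence $a_{p_i}=-1$ forces $p_i\equiv -1\pmod\ell$ since $\ell>2$.

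For~(b) and~(c), since $\rhobarss=\trivial\oplus\cyclomod$ is Eisenstein, the Serre--Katz theory of mod-$\ell$ modular forms produces an Eisenstein eigenform $E\in M_2(\Gamma_0(N))$ with $f\equiv E\pmod\lambda$. For $N=p_1\cdots p_t$ square-free, the Eisenstein subspace of $M_2(\Gamma_0(N))$ has dimension $\sigma_0(N)-1=2^t-1$ and admits a basis of Hecke eigenforms $\{E^\epsilon\}$ indexed by $\epsilon\in\{0,1\}^t\setminus\{(1,\ldots,1)\}$, with $U_{p_i}$-eigenvalue $p_i^{\epsilon_i}$; the omitted tuple corresponds to the non-modular level-$1$ weight-$2$ Eisenstein series~$G_2$. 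Matching Hecke eigenvalues yields $a_{p_i}(f)\equiv p_i^{\epsilon_i}\pmod\ell$. For~(b): if all $a_{p_i}=-1$, each $\epsilon_i=1$ (since $-1\not\equiv 1\pmod\ell$), contradicting the exclusion of $(1,\ldots,1)$. For~(c): if all $a_{p_i}=+1$, either some $\epsilon_i=1$ and then $p_i\equiv 1\pmod\ell$ divides $\prod_j(p_j-1)$; or $\epsilon=(0,\ldots,0)$, in which case $E^{(0,\ldots,0)}=\prod_i(1-p_iV_{p_i})G_2$ (with $V_p f(\tau)=f(p\tau)$) has constant term $(-1)^{t+1}\prod_i(p_i-1)/24$ at~$\infty$, whose vanishing modulo~$\ell$ (forced by $f$ being a cusp form and $\ell\nmid 6$) gives $\ell\mid\prod_i(p_i-1)$. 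The main technical input is producing the Eisenstein congruence together with the explicit description of the Eisenstein subspace at square-free level; these are direct generalizations of Mazur's constructions on $X_0(p)$ and form the bulk of the argument.
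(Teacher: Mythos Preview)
Your Part~(1) and Part~(2)(a) are correct and agree with the paper's treatment.

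Parts~(2)(b) and~(2)(c), however, rest on an unjustified step. You assert that ``the Serre--Katz theory of mod-$\ell$ modular forms produces an Eisenstein eigenform $E\in M_2(\Gamma_0(N))$ with $f\equiv E\pmod\lambda$,'' and then identify $E$ with one of the characteristic-zero eigenforms $E^\epsilon$, $\epsilon\neq(1,\dots,1)$. No theorem of Serre or Katz says this, and in fact the existence of such a characteristic-zero Eisenstein lift is essentially equivalent to~(b). The eigensystem of~$\fbar$ does pin down~$\epsilon$ (namely $\epsilon_i=1$ iff $a_{p_i}=-1$, using~(a)), but when all $a_{p_i}=-1$ the required form $E^{(1,\dots,1)}$ is precisely the one that does not exist in characteristic zero. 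Your argument for~(b) then reads: assume all $a_{p_i}=-1$; then $\epsilon=(1,\dots,1)$; contradiction --- but the contradiction arises only because you have already \emph{assumed} a characteristic-zero Eisenstein lift exists.

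The paper avoids this circularity by working directly in characteristic~$\ell$. It sets
\[
E'=\Big[\prod_{p\mid N}(a_p\Up-p\,\Id)\Big]\Etwobar,
\]
where $\Etwobar$ is the mod-$\ell$ Eisenstein series of filtration~$\ell+1$. This $E'$ always exists, even when all $a_{p_i}=-1$, and carries exactly the Hecke eigensystem of~$\fbar$; Katz's injectivity of~$\Theta$ then gives $\fbar=E'$. The key computation (the Proposition in~\S2.5) is that $E'$ has filtration~$\ell+1$ precisely when $(a_{p_1},\dots,a_{p_t})=(-1,\dots,-1)$, which is incompatible with $\fbar$ having weight~$2$; this yields~(b). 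Part~(c) then follows from the explicit constant term of~$E'$ at~$\infty$. Your description of the characteristic-zero Eisenstein eigenforms $E^\epsilon$ is correct, and for~(c) alone (where $\epsilon=(0,\dots,0)$ is available) your approach can be repaired by directly constructing $E^{(0,\dots,0)}$ and checking $\fbar=\overline{E^{(0,\dots,0)}}$ via Katz. But for~(b) you need the characteristic-$\ell$ construction and the filtration argument, and that is the substantive content you have swept into the phrase ``Serre--Katz theory.''
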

We point out that Ribet already proved (but did not publish) the second part of this theorem as well as ``converse results'' (see the notes~\cite{Rib10} on his homepage).

The last theorem of this section deals with the cases not covered by the previous results.
\begin{thm}\label{th:last_case}
Assume $\rhobar_{f,\lambda}$ reducible. If $k=2$ and $N$ is of the form $N=p_1\cdots p_tc^2$, where $c\not=1$, $p_1,\ldots,p_t$ are $t\ge 1$ distinct primes not dividing~$c$, and $c$ is odd or divisible by~$4$, then~:
\begin{enumerate}
\item either $\ell\mid N$;
\item or $\ell<k-1$;
\item or there exists a prime~$p$ such that $v_p(N)=2$ and $p\equiv\pm1\pmod{\ell}$;
\item or there exists a primitive Dirichlet character~$\nu:(\Z/c\Z)^{\times}\rightarrow\C^{\times}$ such that for $\ell>3$ we have~:
\begin{enumerate}
\item either $\ell$ divides the norm of $p_i^2-\nu^2(p_i)$ for some $1\le i\le t$;
\item or $\ell$ divides the norm of $p^2-\epsilon^{-1}(p)$ for some prime $p\mid c$;
\item or $\ell$ divides  $p_i-1$ for some $1\le i\le t$;
\item or $\ell$ divides the numerator of the norm of $B_{2,\epsilon}/4$
\end{enumerate}
where $c_0\mid c$ and $\epsilon:(\Z/c_0\Z)^{\times}\rightarrow\C^{\times}$ is the inverse of the primitive Dirichlet character attached to~$\nu^2$.
\end{enumerate}
\end{thm}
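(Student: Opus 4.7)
The plan is to combine the analyses used in parts~(2) and~(3) of Theorem~\ref{thm:general_thm}: since $N=p_1\cdots p_tc^2$ has both a nontrivial square-free part and a nontrivial square part, neither case applies on its own, and both sets of conclusions must appear in the statement. As throughout Section~2, the strategy is to interpret the reducibility of $\rhobar_{f,\lambda}$ as a congruence modulo~$\lambda$ between $f$ and an explicit weight-$2$ Eisenstein series attached to Dirichlet characters, and then to read off the divisibility relations from that Eisenstein series' Fourier coefficients and its constant term at a cusp.

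Concretely, write $\rhobarss\simeq\overline{\chi}_1\oplus\overline{\chi}_2$ with $\overline{\chi}_1\overline{\chi}_2=\cyclomod$ (using $k=2$). The local description at Steinberg primes (\S\ref{ss:Steinberg}) constrains each $\overline{\chi}_i$ at $p_i$, while the Carayol--Livn\'e classification (\S\ref{ss:degeneration_cases}) combined with Prop.~\ref{prop:Carayol_Livne} shows that either conclusion~(3) of the theorem holds for some prime of $c$, or else $v_p(N(\rhobarss))=v_p(N)$ for every $p\mid c$. In the latter case one can write $\overline{\chi}_1=\overline{\nu}\,\cyclomod^{a}$ and $\overline{\chi}_2=\overline{\nu}^{-1}\cyclomod^{1-a}$ for some mod-$\ell$ character $\overline{\nu}$ of conductor dividing $c$ and some $a\in\{0,1\}$; choose a complex lift $\nu:(\Z/c\Z)^{\times}\rightarrow\C^{\times}$, and let $\epsilon$ be the inverse of the primitive character of conductor $c_0\mid c$ attached to $\nu^2$. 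Applying the Deligne--Serre lemma in the Katz-theoretic space of weight-$2$ mod~$\ell$ modular forms on $\Gamma_0(N)$, one produces a weight-$2$ Eisenstein series $E$ attached to $\nu$ and $\nu^{-1}$, suitably stabilized at the Steinberg primes $p_1,\ldots,p_t$, such that $f\equiv E\pmod{\lambda}$.

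The conditions~(4)(a)--(d) then follow by inspecting the Fourier expansion of~$E$ prime-by-prime. The coefficient of~$E$ at a Steinberg prime $p_i\nmid c$ decomposes into two local Euler factors, which, together with the Steinberg constraint $a_{p_i}(f)=\pm 1$ and the description~(\ref{eq:local_description_at_Steinberg_primes}), produce both $\ell\mid\mathrm{N}(p_i^2-\nu^2(p_i))$ and $\ell\mid p_i-1$ (conditions~(4)(a) and~(4)(c)). The analogous analysis at primes $p\mid c$ yields condition~(4)(b). Finally, the constant term at the cusp~$\infty$ of the standard (un-stabilized) Eisenstein series is, by the classical formula (related to Proposition~\ref{prop:FE_for_L_functions} via the functional equation), a nonzero multiple of $B_{2,\epsilon}/4$; since $f$ is cuspidal, the congruence $f\equiv E\pmod{\lambda}$ forces $\ell$ to divide the numerator of the norm of $B_{2,\epsilon}/4$, which gives~(4)(d).

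The two technical hypotheses ``$c$ odd or divisible by~$4$'' and ``$\ell>3$'' are the main obstacles. The former is essential because no primitive Dirichlet character has conductor $c$ when $v_2(c)=1$ and $c\ge 2$; the construction of~$\nu$ above would fail in that excluded case, which is precisely why it is separated out (and will need a different treatment elsewhere). The latter is exactly $\ell>k+1$ with $k=2$, and ensures via Proposition~\ref{prop:bernoulli_numbers} that $\ell$ does not divide the denominator of $B_{2,\epsilon}$, so that assertion~(4)(d) has substantive content; this directly parallels the hypothesis $\ell>k+1$ of Theorem~\ref{thm:general_thm}(3). Once these subtleties are handled, the argument amounts to a bookkeeping merger of the Steinberg analysis of part~(2) and the Bernoulli-number analysis of part~(3) of Theorem~\ref{thm:general_thm}.
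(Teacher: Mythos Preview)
Your overall strategy matches the paper's, but the execution has a real gap: the constant term you want to use is at the \emph{wrong cusp}. For $\condnu=c>1$ the Eisenstein series $E$ of~\S\ref{ss:E} (and its stabilization~$E'$) has constant term \emph{zero} at~$\infty$; Proposition~\ref{prop:constant_terms_E} shows that the nonzero constant terms of~$E$ lie only at cusps~$u/\condnu$. The paper therefore computes the constant term of~$E'$ at the cusp~$1/c$ (Prop.~\ref{prop:cst_term_E2_bar_gamma}), and it is \emph{this} constant term that carries the factors
\[
\frac{B_{2,\epsilon}}{4},\qquad \prod_{i=1}^t\bigl(1-p_i^{-1}\bigr),\qquad \prod_{p\mid c}\bigl(1-\epsilon^{-1}(p)p^{-2}\bigr),
\]
yielding conclusions (4)(d), (4)(c), and (4)(b) \emph{simultaneously}. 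In particular, (4)(c) does \emph{not} come from the Fourier coefficient at~$p_i$ as you claim; the $p_i$-th coefficient of the stabilized series~$E'$ is simply~$\nu(p_i)$, congruent to~$a_{p_i}=\pm1$ by construction, and gives no new information.

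Relatedly, (4)(a) does not arise from Fourier coefficients either: it comes from the \emph{dichotomy} in the local comparison at~$p_i$. The equality of character sets $\{\nubar,\nubar^{-1}\cyclomod\}=\{\lambda(a_{p_i}),\lambda(a_{p_i})\cyclomod\}$ on~$G_{p_i}$ has two solutions; if $\nubar=\lambda(a_{p_i})\cyclomod$ for some~$i$, then squaring gives $\nu^2(p_i)\equiv p_i^2\pmod{\mathcal L}$, which is (4)(a), and one stops. Only in the complementary case $\nubar=\lambda(a_{p_i})$ for \emph{all}~$i$ does one build~$E'$ and pass to the cusp computation. Finally, the hypothesis $\ell>3$ is not primarily about the denominator of~$B_{2,\epsilon}$; it is exactly the condition $\ell>k+1$ needed for the injectivity of Katz's operator~$\Theta$, which upgrades the congruence $\Theta(\fbar)=\Theta(\Eprimebar)$ to the equality $\fbar=\Eprimebar$ and hence forces $E'$ to be cuspidal mod~$\ell$ at every cusp.
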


    \subsection{The Eisenstein series $E$}\label{ss:E}

Assume $\ell\nmid N$. For simplicity, let us denote $\rhobar$ for $\rhobar_{f,\lambda}$ and assume $\rhobar$ to be reducible. The semi-simplification~$\rhobarss$ of~$\rhobar$ is the direct sum of two characters $\epsilon_1$ and~$\epsilon_2$. Each of them may be decomposed as a product $\nubar_i\cyclomod^{\alpha_i}$ with $\nubar_i$ is unramified at~$\ell$ and $0\le \alpha_i<\ell-1$ ($i=1,2$). Using that $\rhobarss$ has determinant~$\cyclomod^{k-1}$, we get $\alpha_1+\alpha_2\equiv k-1\pmod{\ell-1}$ and $\nubar_2=\nubar_1^{-1}$.

Let us further assume that $\ell+1\ge k$. Using the results of~\S\ref{ss:local_description_at_ell}, one sees that $\{\alpha_1,\alpha_2\}=\{0,k-1\}$ and thus
\begin{equation}\label{eq:Faltings_Jordan_decomposition}
\rhobarss\simeq\nubar\oplus\nubar^{-1}\cyclomod^{k-1},
\end{equation}
with $\nubar\in\{\nubar_1,\nubar_2\}$. Moreover, according to Carayol's theorem of~\S\ref{ss:degeneration_cases}, the conductor $\condnu$ of~$\nubar$ satisfies~:
\begin{equation}\label{eq:divisibility_local_cond_cond}
\localcond=\condnu^2\mid N.
\end{equation}
In particular, $\localcond$ is a square dividing~$N$.

Let $\nu$ be the Teichm\"uller lift of~$\nubar$. We may identify it with a primitive Dirichlet character modulo~$\condnu$. From now on, assume that~:
\begin{enumerate}
\item either $k>2$;
\item or, $k=2$ and $\condnu\not=1$.
\end{enumerate}
Under this assumption, we may consider the Eisenstein series in~$\ModSpace[M][k][0][\condnu^2]$ whose Fourier expansion is given by~:
\begin{equation*}
E(\tau)=-\vartheta(\condnu)\frac{B_k}{2k}+\sum_{n\ge 1}\sigma_{k-1}^{\nu}(n)q^n ,
\end{equation*}
where 
\begin{equation*}
\vartheta(\condnu)=\displaystyle{\left\{\begin{array}{ll}
                                                    1 & \textrm{if }\condnu=1 \\
						    0 & \textrm{otherwise}
                                                    \end{array}
\right.},\quad \sigma_{k-1}^{\nu}(n)=\sum_{0<m|n}\nu(n/m)\nu^{-1}(m)m^{k-1}
\end{equation*}
and $B_k$ is the $k$-th Bernoulli number. Note also that our notation~$E$ differs from the notation~$E_{k}^{\nu,\nu^{-1}}$ of~\cite[Ch.~4]{DiSh05} by a factor~$2$~: $E_{k}^{\nu,\nu^{-1}}=2E$ .

The following proposition gives the constant term of the Fourier expansion of~$E$ at the various cusps of~$\Gamma_0(\condnu^2)$.
\begin{prop}\label{prop:constant_terms_E}
The Eisenstein series $E$ is defined over $\mathcal{O}_L$ where $L$ is the field generated by the values of~$\nu$, unless $\condnu=1$ (and $k>2$) in which case $E$ is the classical Eisenstein series $E_k(\tau)=-B_k/2k+\sum_{n\ge1}\sigma_{k-1}(n)q^n$ of weight~$k$ and level~$1$. Let $s=u/v$ with $\gcd(u,v)=1$, $v\mid \condnu^2$ and $u\pmod{\gcd(v,\condnu^2/v)}$ be a cusp of~$\Gamma_0(\condnu^2)$ and let $\gamma\in\SL(2,\Z)$ such that $\gamma\infty=s$. Then the constant term~$\Upsilon$ of $E|_k\gamma$ is independent of the choice of such a $\gamma$ and satisfies~:
\begin{equation*}
\Upsilon\not=0\Leftrightarrow v=\condnu.
\end{equation*}
In that case, we have~:
\begin{equation*}
\Upsilon=-\nu\left(-u\right)\left(\frac{\condnu}{c_0}\right)^k \frac{W\left((\nu^{2})_0\right)}{W(\nu)}
\frac{B_{k,(\nu^{2})_0^{-1}}}{2k}
\prod_{p\mid \condnu}\left(1-(\nu^2)_0(p)p^{-k}\right),
\end{equation*}
where $(\nu^2)_0$ is the primitive character associated to $\nu^2$ of modulus~$c_0\mid \condnu$. Moreover, if $\condnu>1$, then $E|_k\gamma\in\mathcal{O}_L\left[\frac{1}{\condnu^2}\right](\mu_{\condnu^2})[[q^{1/\condnu^2}]]$ where $\mu_{\condnu^2}$ is the group of $\condnu^2$-th roots of unity.
\end{prop}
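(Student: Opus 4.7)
The plan is to recognize $2E$ as the Eisenstein series $E_k^{\nu,\nu^{-1}}$ in the Diamond--Shurman normalization~\cite[Ch.~4]{DiSh05}. Since $\nu\cdot\nu^{-1}$ is the trivial character and $k$ is even, this series belongs to $\ModSpace[M][k][0][\condnu^2]$. In the classical case $\condnu=1$ (so $k>2$), the value $L(1-k,\trivial)=-B_k/k$ recovers $E_k$; when $\condnu>1$, nontriviality of $\nu$ forces the constant term of $E_k^{\nu,\nu^{-1}}$ at $\infty$ to vanish, while the remaining Fourier coefficients $\sigma_{k-1}^{\nu}(n)$ are $\mathcal{O}_L$-integral divisor sums in the values of $\nu$ and $\nu^{-1}$. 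This yields the first integrality statement.

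Independence of $\Upsilon$ from the choice of $\gamma$ is routine: two lifts of $s=\gamma\infty$ differ by right-multiplication by an element of $\Gamma_{\infty}=\left\{\pm\left(\begin{smallmatrix}1&n\\0&1\end{smallmatrix}\right):n\in\Z\right\}$. Translations shift the $q^{1/\condnu^2}$-expansion by a root of unity on higher terms but fix the constant coefficient, and $-\Id$ acts by $(-1)^k=1$.

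For the main computation I would apply the explicit formula for the Fourier expansion of $E_k^{\psi,\phi}$ at an arbitrary cusp, as developed in \cite[\S4.2]{DiSh05} or~\cite[\S7.1]{Miy06}. Writing $\gamma\in\SL(2,\Z)$ with lower-left entry~$v$ (so that $v\mid\condnu^2$) and upper-left entry congruent to~$u$ modulo~$v$, the constant term of $E|_k\gamma$ decomposes as a product of three ingredients: a character sum involving $\nu$ and $\nu^{-1}$ taken over residues modulo~$v$, a Gauss-sum factor arising from the transformation of $\nu$ under $\gamma$, and the imprimitive special value $L(k,\nu^2)$ produced by applying the Lipschitz summation formula to the inner lattice sum. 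Since $\nu$ has conductor~$\condnu$, orthogonality forces the character sum to vanish unless $v=\condnu$, which establishes the dichotomy $\Upsilon\neq 0\Leftrightarrow v=\condnu$.

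In the remaining case $v=\condnu$, the character sum collapses to the factor $\nu(-u)$ (with a normalizing constant), and the Gauss-sum contribution simplifies to $W(\nu)^{-1}$. Applying Proposition~\ref{prop:FE_for_L_functions} to the primitive character $(\nu^2)_0$ of modulus $c_0\mid\condnu$ and combining with the imprimitive-to-primitive Euler factorization of $L(k,\nu^2)$, the special value contributes $W((\nu^2)_0)$, the Bernoulli term $B_{k,(\nu^2)_0^{-1}}/2k$, the Euler product $\prod_{p\mid\condnu}(1-(\nu^2)_0(p)p^{-k})$, and the conductor ratio $(\condnu/c_0)^k$, while the transcendental constant $C_k=(2i\pi)^k/(k-1)!$ cancels against the analogous factor produced by the Lipschitz formula in the Eisenstein expansion. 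Assembling these pieces yields the stated closed form. The main obstacle is precisely this careful bookkeeping of Gauss sums, signs, and conductor powers in the passage from $\nu^2$ to $(\nu^2)_0$; the cusp width $\condnu^2$ at $u/\condnu$ likewise dictates the ring $\mathcal{O}_L\left[\frac{1}{\condnu^2}\right](\mu_{\condnu^2})[[q^{1/\condnu^2}]]$ appearing in the final integrality assertion.
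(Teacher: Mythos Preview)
Your outline is correct and follows essentially the same route as the paper: identify $2E$ with $E_k^{\nu,\nu^{-1}}$, compute the constant term at a cusp via the Diamond--Shurman/Schoeneberg lattice-sum formula for the normalized series $G=(C_kW(\nu)/\condnu^k)E$, reduce to the special value $L(k,\nu^2)$ when $v=\condnu$, and then apply Proposition~\ref{prop:FE_for_L_functions}. Two small points of precision: in the paper's computation the vanishing for $v\neq\condnu$ arises from a support/divisibility argument (each summand is individually zero because the $\vartheta$-condition together with $\gcd(ij,\condnu)=1$ forces $\condnu\mid v$ and then $\gcd(v/\condnu,\condnu)=1$) rather than from a character-orthogonality cancellation, and the final integrality assertion is obtained by invoking Katz's $q$-expansion principle rather than the cusp width alone.
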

\begin{proof}
The proposition is immediate when $\condnu=1$. Assume therefore $\condnu>1$. Then by construction the Fourier expansion of~$E$ has coefficients in~$\mathcal{O}_L$ and therefore $E$ is defined over~$\mathcal{O}_L\left[1/\condnu^2\right](\mu_{\condnu^2})$ (\cite[\S1.6]{Kat73}).

Let $s=u/v$ as in the proposition be a cusp of~$\Gamma_0(\condnu^2)$ (for the description of a set of representatives of the cusps of~$\Gamma_0(\condnu^2)$, see~\cite[Prop.~2.6]{Iwa97}) and $\gamma\in\SL(2,\Z)$ such that $\gamma\infty=s$. The last assertion follows from the $q$-expansion principle and the fact that the Fourier of~$E$ at~$\infty$ has coefficients in~$\mathcal{O}_L$ (\cite[Cor.~1.6.2.]{Kat73}).

Since $k$ is even, the constant term of $E$ at~$s$ is well-defined (i.e. does not depend of the choice of such a~$\gamma$). Put
\begin{equation}\label{eq:def_of_G}
\gamma=\displaystyle{\begin{pmatrix}
                                                                           u & \beta \\
									   v & \delta \\
                                                                           \end{pmatrix}}\in\SL(2,\Z)\quad\textrm{and}\quad 
G=\frac{C_kW(\nu)}{\condnu^k}E,\quad\textrm{where }C_k=\frac{(2i\pi)^k}{(k-1)!}.
\end{equation}
The constant part of $G|_k\gamma$ is then given by the following sum (see~\cite[Ch.~4]{DiSh05} and~\cite[\S~VII.3]{Sch74} for a justification in the weight~$2$ case; the factor $1/2$ comes from our normalization for~$E$)~:
\begin{equation*}
\Upsilon_0=\frac{1}{2}\sum_{i,j,l=0}^{\condnu-1}\nu(ij)\vartheta\left(\overline{i\condnu u+v(j+l\condnu)}\right)\zeta^{\overline{\condnu i\beta+(j+l\condnu)\delta}}(k),
\end{equation*}
where the bar means reduction modulo~$\condnu^2$,
\begin{equation*}
\vartheta(\overline{n})=\displaystyle{\left\{\begin{array}{ll}
                                                    1 & \textrm{if }n\equiv0\pmod{\condnu^2} \\
						    0 & \textrm{otherwise}
                                                    \end{array}
\right.},\quad \zeta^{\overline{n}}(k)=\displaystyle{\sideset{}{'}\sum\limits_{m\equiv n\pmod{\condnu^2}}\frac{1}{m^k}},
\end{equation*}
and the primed summation notation means to sum over non-zero integers. 

Assume $\Upsilon_0$ to be non-zero. Then, there exist $i,j,l\in\{0,\ldots,\condnu-1\}$ such that 
\begin{equation*}
\nu(ij)\vartheta\left(\overline{i\condnu u+v(j+l\condnu)}\right)\not=0.
\end{equation*}
In other words, $\gcd(ij,\condnu)=1$ and $i\condnu u+v(j+l\condnu)\equiv 0\pmod{\condnu^2}$. It follows that  $vj\equiv 0\pmod{\condnu}$. But $j$ is co-prime to~$\condnu$ by assumption. So, $v\equiv 0\pmod{\condnu}$ and $u$ is invertible modulo~$\condnu$. The congruence $i\equiv -(j/u)(v/\condnu)\pmod{\condnu}$ follows easily and therefore, we have~:
\begin{equation*}
\nu(ij)=\nu\left(-\frac{vj^2}{u\condnu}\right)=\nu\left(-\frac{j^2}{u}\right)\nu\left(\frac{v}{\condnu}\right)\not=0.
\end{equation*}
So, $\gcd\left(v/\condnu,\condnu\right)=1$ and since $\condnu\mid v$ and $v\mid\condnu^2$, we get $v=\condnu$.

Conversely, assume $v=\condnu>1$. Then, $\gcd(u,\condnu)=1$ and on one hand, we have~:
\begin{equation*}
i\condnu u+v(j+l\condnu)\equiv 0\pmod{\condnu^2}\Longleftrightarrow i\equiv -j/u\pmod{\condnu}
\end{equation*}
and on the other hand~:
\begin{align*}
\condnu i\beta+(j+l\condnu)\delta & =  \frac{1}{u}\left(u\condnu i\beta+(j+l\condnu)u\delta\right)\\
		     & \equiv   \frac{1}{u}\left(-vj\beta+(j+l\condnu)(1+\beta v\right)\pmod{\condnu^2}\\
		     & \equiv   \frac{1}{u}\left(j+l\condnu\right)\pmod{\condnu^2}.
\end{align*}
Combining these two facts, we find that~:
\begin{align}\label{eq:constant_term}
2\Upsilon_0 & =  \sum_{l=0}^{\condnu-1}\sum_{\substack{j=0 \\ \gcd(j,\condnu)=1}}^{\condnu-1}{\nu(-j^2/u)}\zeta^{\overline{\frac{j+l\condnu}{u}}}(k) \nonumber \\
	  & =  \nu(-u)\sum_{l=0}^{c-1}\sum_{\substack{j=0 \\ \gcd(j,\condnu)=1}}^{\condnu-1}{\nu\left(j^2/u^2\right)}
	  \sideset{}{'}\sum_{m\equiv(j+l\condnu)/u\pmod{\condnu^2}}\frac{1}{m^k} \\
	  & = \nu(-u)\sum_{\substack{j=0 \\ \gcd(j,\condnu)=1}}^{\condnu-1}
	  \sideset{}{'}\sum_{m\equiv j/u\pmod{\condnu}}\frac{\nu^2(m)}{m^k} \nonumber \\
	  & = 2\nu(-u)\sum_{m\ge1}\frac{\nu^2(m)}{m^k}= 2\nu(-u)L(k,\nu^2), \nonumber
\end{align}
where $\nu^2$ is viewed as a character modulo~$\condnu$. Let $(\nu^2)_0$ be the primitive Dirichlet character attached to~$\nu^2$. It is an even character modulo~$c_0\mid \condnu$ and we have~:
\begin{equation}
L(k,\nu^2)=L\left(k,(\nu^2)_0\right)\prod_{p\mid \condnu}\left(1-(\nu^2)_0(p)p^{-k}\right).
\end{equation}
Applying Prop.~\ref{prop:FE_for_L_functions} to $\psi=(\nu^2)_0$ and $m=k$, we get~:
\begin{equation}\label{eq:FE}
L(k,(\nu^2)_0)=-W\left((\nu^{2})_0\right)\frac{C_k}{c_0^k}\frac{B_{k,(\nu^{2})_0^{-1}}}{2k}\not=0.
\end{equation}
According to Eq.~(\ref{eq:constant_term})-(\ref{eq:FE}) together with~(\ref{eq:def_of_G}), when $v=\condnu$, the constant term of the Fourier expansion of $E$ at~$s$ is thus the non-zero algebraic number~:
\begin{equation*}
\Upsilon=\frac{\condnu^k}{C_kW(\nu)}\Upsilon_0=-\nu(-u)\left(\frac{\condnu}{c_0}\right)^k \frac{W\left((\nu^{2})_0\right)}{W(\nu)}
\frac{B_{k,(\nu^{2})_0^{-1}}}{2k}
\prod_{p\mid \condnu}\left(1-(\nu^2)_0(p)p^{-k}\right),
\end{equation*}
as claimed.
\end{proof}

    \subsection{Proof of Theorems~\ref{thm:val_at_2} and~\ref{thm:general_thm}}\label{ss:proof_general_thm}
Assume~$\rhobar$ reducible with $\ell\nmid N$ and $\ell+1\ge k$. We keep the notation of~\S\ref{ss:E}. In particular, we have (cf.~(\ref{eq:Faltings_Jordan_decomposition}) and~(\ref{eq:divisibility_local_cond_cond}))
\begin{equation}\label{eq:Faltings_Jordan_decomposition_general_thm}
\rhobarss\simeq\nubar\oplus\nubar^{-1}\cyclomod^{k-1},
\end{equation}
where $\nubar$ is a character of conductor~$\condnu$ such that $\condnu^2\mid N$. So, in particular, we have~$\condnu\mid c$.

Assume that $v_2(N)=2$. Then, $v_2(c)=1$ and $\condnu$ is odd since there is no primitive Dirichlet character modulo twice an odd integer. Therefore, we are in a degeneracy case at~$p=2$ as described in~\S\ref{ss:degeneration_cases}. By Prop.~\ref{prop:Carayol_Livne}, we have $2\equiv \pm1\pmod{\ell}$, namely $\ell=3$.

If $N$ is not a square, let us consider a prime $p$ dividing~$N$ with odd valuation~$v_p(N)$. Once again, we necessarily are in one of the degeneration cases. If $v_p(N)\ge 3$, then by Prop.~\ref{prop:Carayol_Livne}, we get $p\equiv\pm1\pmod{\ell}$. This completes the proof of Thm.~\ref{thm:val_at_2}.

Assume now that for some prime~$p$, we have $v_p(N)=1$ and let us denote by $\eta$ the Teichm\"uller lift of $\nubar^2$. Since $\condnu$ is a divisor of~$c$, we may identify $\eta$ with an even Dirichlet character modulo~$c$. Comparing the restriction to a decomposition group at~$p$ of~$\rhobarss$ given by~(\ref{eq:Faltings_Jordan_decomposition}) with the local representation given by~(\ref{eq:local_description_at_Steinberg_primes}) we get the following equality between sets of characters of~$G_p$~:
\begin{equation*}
\left\{\nubar,\nubar^{-1}\cyclomod^{k-1}\right\}=\left\{\mu\cyclomod^{k/2},\mu\cyclomod^{k/2-1}\right\},
\end{equation*}
where $\mu=\lambda(a_p/p^{k/2-1})$ is the quadratic character defined in~\S\ref{ss:Steinberg}.
We thus are in one of the following situations~:
\begin{enumerate}
\item Either $\nubar=\mu\cyclomod^{k/2}$ and then $\nubar^2=\cyclomod^k$. Applying this equality to a Frobenius element at~$p$, we get that $\nubar^2(\Frob_p)=p^k\pmod{\ell}$ and therefore $\ell$ divides the norm of $p^k-\eta(p)$.
\item Or $\nubar=\mu\cyclomod^{k/2-1}$ and then $\nubar^2=\cyclomod^{k-2}$. Again we have $\nubar^2(\Frob_p)=p^{k-2}\pmod{\ell}$ and we conclude as before that $\ell$ divides the norm of $p^{k-2}-\eta(p)$.
\end{enumerate}

It remains to prove Thm.~\ref{thm:general_thm} when $N$ is a square, namely when $N=c^2$. Assume first that $\condnu\not=c$. Then we are in a degeneracy case as described in~\S\ref{ss:degeneration_cases} for some prime number~$p$. Moreover, $\localcond=\condnu^2$ is a square and therefore we have $v_p(N)=2$ and $v_p(\localcond)=0$. By Prop.~\ref{prop:Carayol_Livne}, it follows that $p\equiv\pm1\pmod{\ell}$. 

In other words, if for every prime $p$ dividing~$N$ with valuation~$2$, we have $p\not\equiv\pm1\pmod{\ell}$, then $\condnu=c$, $N=\condnu^2$ and there is no degeneration at all. Assume now that we are in this situation. Since the space of weight~$2$ and level~$1$ modular forms is trivial, it follows that either $k>2$, or $k=2$ and $\condnu\not=1$. Therefore we may consider the Eisenstein series $E$ of~\S\ref{ss:E}. Let $M$ denote the compositum of~$K$ and~$L$ (the field generated by the values of~$\nu$).

\begin{lemma}\label{lem:strong_cong_f_E}
The Eisenstein series $E$ is a normalized eigenform for all the Hecke operators at level~$\Gamma_0(N)$. Moreover, there exists a prime ideal~$\mathcal{L}$ above~$\ell$ in the integer ring of~$M$ such that~:
\begin{equation*}
a_r\equiv a_r(E)\pmod{\mathcal{L}},\quad\textrm{for all primes }r\not=\ell.
\end{equation*}
\end{lemma}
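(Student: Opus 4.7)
The plan is to verify that $E$ is a normalized Hecke eigenform by direct inspection of its Fourier coefficients, and then to derive the congruence by comparing the semisimple Galois representations attached to $f$ and to $E$.

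First I would establish the eigenform claim. The arithmetic function $\sigma_{k-1}^{\nu}(n)=\sum_{m\mid n}\nu(n/m)\nu^{-1}(m) m^{k-1}$ is multiplicative in $n$, and a standard computation of convolutions shows that for every prime $r\nmid\condnu$ one has $E\mid T_r=(\nu(r)+\nu^{-1}(r) r^{k-1})E$. For a prime $r\mid\condnu$ the character $\nu$ vanishes at $r$, so $\sigma_{k-1}^{\nu}(r^a)=0$ for every $a\ge1$, and hence $E\mid U_r=0$. Since $a_1(E)=1$ and we are in the no-degeneracy situation $N=\condnu^2$, this proves that $E$ is a normalized eigenform at level $\Gamma_0(N)$.

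Next, for the congruence, attach to $E$ the semisimple two-dimensional $\ell$-adic Galois representation
\begin{equation*}
\rho_E=\nu\oplus\nu^{-1}\cyclo[\ell]^{k-1},
\end{equation*}
where $\nu$ is viewed as a Galois character via class field theory and $\cyclo[\ell]$ is the $\ell$-adic cyclotomic character. Pick a prime $\mathcal{L}$ of $\mathcal{O}_M$ above $\ell$ extending $\lambda$. The reduction of $\rho_E$ modulo $\mathcal{L}$ is $\nubar\oplus\nubar^{-1}\cyclomod^{k-1}$, which by~(\ref{eq:Faltings_Jordan_decomposition_general_thm}) is isomorphic to $\rhobarss$. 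For every prime $r\nmid N\ell$, comparing traces of $\Frob_r$ on both sides yields
\begin{equation*}
a_r(f)\equiv\tr\rhobarss(\Frob_r)=\nubar(\Frob_r)+\nubar^{-1}(\Frob_r) r^{k-1}\equiv\nu(r)+\nu^{-1}(r) r^{k-1}=a_r(E)\pmod{\mathcal{L}}.
\end{equation*}
For a prime $r\mid N$ distinct from $\ell$, we have $a_r(E)=0$ by the computation above, while every such $r$ satisfies $v_r(N)\ge 2$ (since $N=c^2$), so the standard description of local components of newforms with trivial nebentypus at primes of level at least $2$ (Atkin--Lehner) forces $a_r(f)=0$ as well; the congruence therefore holds trivially.

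The main technical point is the analysis at primes $r\mid N$, where one needs both the vanishing of $a_r(E)$ (coming from the ramification of $\nu$) and of $a_r(f)$ (coming from Atkin--Lehner theory). Everything else — multiplicativity of $\sigma_{k-1}^{\nu}$, the Chebotarev-based comparison of traces at $r\nmid N\ell$, and the identification of the reduction of $\rho_E$ modulo $\mathcal{L}$ with $\rhobarss$ — is routine.
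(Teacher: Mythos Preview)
Your proof is correct and follows essentially the same approach as the paper's own proof: cite or directly verify that $E$ is a normalized Hecke eigenform, use the isomorphism~(\ref{eq:Faltings_Jordan_decomposition_general_thm}) to obtain the congruence at primes $r\nmid N\ell$, and handle primes $r\mid N$ by observing that both $a_r(f)$ and $a_r(E)$ vanish (the former because $r^2\mid N$, the latter because $\nu(r)=\nu^{-1}(r)=0$). The only difference is cosmetic: you spell out the trace comparison and the multiplicativity of $\sigma_{k-1}^{\nu}$ explicitly, whereas the paper simply cites \cite[Prop.~5.2.3]{DiSh05} for the eigenform property and \cite[Th.~4.6.17]{Miy06} for the vanishing of $a_r(f)$.
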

\begin{proof}
The fact that $E$ is a normalized eigenform for all the Hecke ope\-rators at level~$\Gamma_0(N)$ follows for instance from~\cite[Prop.~5.2.3]{DiSh05}. Moreover by isomorphism~(\ref{eq:Faltings_Jordan_decomposition_general_thm}) there exists a prime ideal~$\mathcal{L}$ above~$\ell$ in the integer ring of~$M$ such that~:
\begin{equation*}
a_r\equiv a_r(E)\pmod{\mathcal{L}},\quad\textrm{for all primes }r\nmid\ell N.
\end{equation*}
If now $r$ is a prime dividing~$N$, then $r^2\mid N$ and $a_r=0$ (\cite[Th.~4.6.17]{Miy06}). Besides, $\nu(r)+\nu^{-1}(r)r^{k-1}=0$. Hence $a_r=0=a_r(E)$. This proves the lemma.
\end{proof}

Let now $\Theta$ be the Katz' operator on modular forms over~$\Fellbar$ whose action on $q$-expansions is given by $q\frac{d}{dq}$ (denoted $A\theta$ in~\cite{Kat77}). Assume $\ell>k+1$. Then the constant term of~$E$ at~$\infty$ is non-zero only if $\condnu=1$ and $k>2$. In that case it is $-B_k/2k$ which is $\ell$-integral by Prop.~\ref{prop:bernoulli_numbers}. We denote by $\fbar$ and $\Ebar$ the modular forms over~$\Fellbar$ obtained by reduction modulo~$\mathcal{L}$ of $f$ and $E$ respectively. Lemma~\ref{lem:strong_cong_f_E} implies that $\Theta(\fbar)=\Theta(\Ebar)$. Moreover Katz has proved that if $\ell>k+1$, then $\Theta$ is injective (\cite[Cor.~(3)]{Kat77}). Under this assumption, it thus follows that the Eisenstein series $E$ becomes cuspidal after reduction. 

If $\condnu=1$ we immediately get that $\ell$ divides the numerator of~$B_k/2k$ as stated in the theorem. Assume therefore that $\condnu>1$. Then $\ell$ divides the numerator of the norm of the constant term of~$E$ at each cusp of~$\Gamma_0(\condnu^2)$, namely by Prop.~\ref{prop:constant_terms_E}~:
\begin{equation*}
\Upsilon=\pm\left(\frac{\condnu}{c_0}\right)^k \frac{W\left(\epsilon^{-1}\right)}{W(\nu)}
\frac{B_{k,\epsilon}}{2k}
\prod_{p\mid \condnu}\left(1-\epsilon^{-1}(p)p^{-k}\right),
\end{equation*}
where $\epsilon:(\Z/c_0\Z)^{\times}\rightarrow\C^{\times}$ is the inverse of the primitive Dirichlet character attached to~$\nu^2$.
By Lemma~\ref{lem:Gauss_sum}, the prime divisors of the norm of~$W\left(\epsilon^{-1}\right)/W(\nu)$ divide~$N$ and therefore are co-prime to~$\ell$. The same obviously holds for $\condnu/c_0$. Therefore we eventually get that $\ell$ divides the norm of either~$p^k-\epsilon^{-1}(p)$ for some~$p$ dividing $\condnu$ (and thus $c$) or the norm of the numerator of~$B_{k,\epsilon}/2k$. This completes the proof of Thm.~\ref{thm:general_thm}.

    \subsection{Proof of Theorem~\ref{thm:square_free_level_case}}

As already mentioned, the first part of Thm.~\ref{thm:square_free_level_case} is a direct corollary of Thm.~\ref{thm:general_thm}. So, let us assume $k=2$ and $\ell\nmid 6N$. By the reasoning at the beginning of~\S\ref{ss:E}, we may write~:
\begin{equation}\label{eq:Faltings_Jordan_weight_2}
\rhobarss\simeq\trivial\oplus\cyclomod,
\end{equation}
where $\trivial$ is the trivial character of~$\Gal(\Qbar/\Q)$. In particular, we have $\condnu^2=\localcond=1$, hence $\condnu=1$. Let now $p\in\{p_1,\ldots,p_t\}$ be a prime dividing~$N$. By~\S\ref{ss:Steinberg}, the local representation~$\rhobar_p$ at~$p$ semi-simplifies to~:
\begin{equation}\label{eq:Steinberg_weigh_2}
\lambda(a_p)\oplus\lambda(a_p)\cyclomod.
\end{equation}
Comparing~(\ref{eq:Faltings_Jordan_weight_2}) and~(\ref{eq:Steinberg_weigh_2}) we get the following  equality between sets of characters of~$G_p$~:
\begin{equation*}
\left\{\trivial,\cyclomod\right\}=\left\{\lambda(a_p)\cyclomod,\lambda(a_p)\right\}.
\end{equation*}
If moreover $a_p=-1$, then the character $\lambda(a_p)$ is non-trivial and therefore, we must have $\lambda(a_p)=\cyclomod$ as characters of~$G_p$. In other words, $p\equiv-1\pmod{\ell}$. This proves assertion~(\ref{item:congruence_weight_2}) of Thm.~\ref{thm:square_free_level_case}.

Before proving the next two assertions, note that we precisely are in the excluded situation of~\S\ref{ss:E}, namely $k=2$ and $\condnu=1$. For that reason, we cannot use the Eisenstein series~$E$ as in the proof of Thm.~\ref{thm:general_thm} (cf.~\S\ref{ss:proof_general_thm}).

To circumvent the lack of weight $2$ level $1$ Eisenstein series, it will be more convenient to directly work with modular forms over~$\Fellbar$. Let $\Etwobar$ be the reduction modulo~$\ell$ (recall that $\ell\ge 5$) of the classical series $E_2$ in charac\-teristic~$0$ defined by~:
\begin{equation*}
E_2(\tau)=-\frac{1}{24}+\sum_{n\ge 1}\sigma_1(n)q^n .
\end{equation*}
Viewed as a modular form over~$\Fellbar$ of level~$N$ (which is co-prime to~$\ell$ by assumption), $\Etwobar$ has filtration~$\ell+1$ (\cite{Ser73}). Put~:
\begin{equation*}
E'=\left[\prod_{p\mid N}\left(a_p\Up -p\Id\right)\right]\Etwobar.
\end{equation*}
The following proposition summarizes the main properties of~$E'$.
\begin{prop}
As a modular form over~$\Fellbar$, $E'$ is a well-defined normalized eigenform for all the Hecke operators at level~$\Gamma_0(N)$ such that~:
\begin{equation*}
\left\{
\begin{array}{rcll}
T_rE' & = & (1+r)E' & \textrm{for all prime }r\nmid N\\
\Up E' & = & a_pE' & \textrm{for any prime }p\mid N.
\end{array}
\right.
\end{equation*}
Moreover $E'$ has filtration~$2$ unless $(a_{p_1}(f),\ldots,a_{p_t}(f))=(-1,\ldots,-1)$ when it has filtration~$\ell+1$. The constant term of its Fourier expansion at infinity is given by~:
\begin{equation*}
a_0(E')=\left\{
\begin{array}{cl}
(-1)^{t+1}\frac{(p_1-1)\cdots(p_t-1)}{24} & \textrm{if }(a_{p_1}(f),\ldots,a_{p_t}(f))=(+1,\ldots,+1) \\
0 & \textrm{otherwise}.
\end{array}
\right.
\end{equation*}
 
\end{prop}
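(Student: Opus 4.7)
The plan is to build $E'$ by successively applying the operators $(a_p\Up - p\Id)$, $p\mid N$, to $\Etwobar$ and to read off each property from the Hecke eigensystem of $E_2$, the identity $\Up = T_p - pV_p$ (valid in weight~$2$ at level divisible by~$p$), and the congruence $p\equiv -1\pmod{\ell}$ forced by assertion~(\ref{item:congruence_weight_2}) of Theorem~\ref{thm:square_free_level_case} whenever $a_p=-1$. Because $\Up[p]$ and $\Up[q]$ commute for $p\neq q$, and commute with every $T_r$ ($r\nmid N$) and with $V_q$ for $q\neq p$, the product defining $E'$ is unambiguous; each factor raises the level by one prime, so $E'$ is a well-defined mod~$\ell$ modular form on~$\Gamma_0(N)$.

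For the Hecke action, every $T_r$ with $r\nmid N$ commutes with all $\Up[p]$ and $V_p$ ($p\mid N$) and satisfies $T_r\Etwobar=(1+r)\Etwobar$, hence $T_rE'=(1+r)E'$. For $\Up[p]$ with $p\mid N$, a direct computation using $\Up[p]\Etwobar=(1+p)\Etwobar-pV_p\Etwobar$ and $\Up[p]V_p=\Id$ shows that $(a_p\Up[p]-p)\Etwobar$ is a $\Up[p]$-eigenform with eigenvalue~$a_p$ in~$\Fellbar$; the case $a_p=1$ is immediate, and the case $a_p=-1$ uses precisely $(1+p)^2\equiv 0\pmod{\ell}$. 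The eigenvalue~$a_p$ is then preserved by the remaining commuting factors at the other primes.

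Computing the constant term at infinity is routine: both $\Up[p]$ and $V_p$ fix the $q^0$-coefficient, so $a_0((a_p\Up[p]-p)F)=(a_p-p)\,a_0(F)$, and iterating from $a_0(\Etwobar)=-1/24$ yields $a_0(E')=-\tfrac{1}{24}\prod_{i=1}^{t}(a_{p_i}-p_i)$. When every $a_{p_i}=+1$ this reproduces $(-1)^{t+1}(p_1-1)\cdots(p_t-1)/24$; otherwise some $a_{p_i}-p_i\equiv 0\pmod{\ell}$ by the congruence, so $a_0(E')$ vanishes in~$\Fellbar$.

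The filtration claim is the most delicate point. When some $a_{p_i}=+1$, the factor $(\Up[p_i]-p_i)\Etwobar=\Etwobar-p_iV_{p_i}\Etwobar$ is the classical $p_i$-stabilization of~$E_2$, a holomorphic weight~$2$ modular form on~$\Gamma_0(p_i)$ over~$\mathbf{Z}$; the remaining factors preserve weight~$2$ modularity, so $E'$ has filtration at most~$2$, with equality from the non-triviality of its eigensystem. When all $a_{p_i}=-1$, an induction on~$t$ using $p_i\equiv -1\pmod{\ell}$ yields $E'\equiv\prod_{i=1}^{t}(\Id-V_{p_i})\Etwobar\pmod{\ell}$. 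The main obstacle is then to rule out a weight~$2$ lift on~$\Gamma_0(N)$: by a Deligne--Serre type argument any such lift would produce a characteristic-zero weight~$2$ Eisenstein eigenform on~$\Gamma_0(N)$ with the prescribed eigenvalues, but every weight~$2$ Eisenstein eigenform on a squarefree level is an oldform from a proper divisor, so at least one of its $\Up[p]$-eigenvalues equals~$1$ in characteristic zero; matching this with~$-1$ modulo~$\ell$ forces $\ell=2$, contradicting $\ell\geq 5$. Hence no weight~$2$ lift of~$E'$ exists and its filtration equals that of~$\Etwobar$, namely~$\ell+1$.
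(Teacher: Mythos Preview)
Your treatment of the Hecke eigenvalues and of the constant term is correct and in fact somewhat cleaner than the paper's: working directly with the identity $\Up[p]=T_p-pV_p$ and iterating $a_0\bigl((a_p\Up[p]-p)F\bigr)=(a_p-p)\,a_0(F)$ is more transparent than the paper's expansion in terms of the forms $\Etwoubar{u}$. The filtration argument in the case where some $a_{p_i}=+1$ is also fine (modulo the easy check that $a_1(E')=1$, which you use implicitly).

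The real problem is your filtration argument in the case $(a_{p_1},\ldots,a_{p_t})=(-1,\ldots,-1)$. You assert that a Deligne--Serre lift of $E'$ to weight~$2$ must be \emph{Eisenstein}, but Deligne--Serre only produces a characteristic-zero eigenform in $M_2(\Gamma_0(N))$; nothing prevents this lift from being cuspidal. In fact, under the ambient hypotheses of this section ($\rhobar_{f,\lambda}$ reducible and every $a_{p_i}(f)=-1$), the form $f$ itself is a weight-$2$ cuspidal eigenform on $\Gamma_0(N)$ whose Hecke eigenvalues reduce to those of~$E'$, so it is precisely the cuspidal lift you have not excluded. Your contradiction (``at least one $\Up$-eigenvalue equals $1$, forcing $\ell=2$'') applies only to the Eisenstein branch, and the argument therefore does not close.

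The paper avoids this trap by proving the purely algebraic identity
\[
E'=2^t\,\Etwobar+\sum_{\emptyset\neq S\subseteq\{1,\ldots,t\}}(-1)\,\Etwoubar{\prod_{i\in S}p_i}
\]
(this is exactly what one gets by rewriting your formula $E'\equiv\prod_i(\Id-V_{p_i})\Etwobar$ via $V_{p_S}\Etwobar\equiv(-1)^{|S|}(\Etwobar-\Etwoubar{p_S})$). Since each $\Etwoubar{u}$ is a genuine weight-$2$ form, if $E'$ had filtration~$\le 2$ then so would $E'-\sum(-1)\Etwoubar{\ldots}=2^t\Etwobar$, contradicting the fact that $\Etwobar$ has filtration $\ell+1$ (and $2^t\not\equiv 0$ for $\ell\ge 5$). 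This is the missing step; once you isolate the $2^t\Etwobar$ contribution, no lifting argument is needed.
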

\begin{proof}
By the commutativity of the Hecke algebra, $E'$ is a well-defined modular form over~$\Fellbar$ of level~$N$. Let $r$ be a prime not dividing~$N$. Since $T_r\Etwobar=(1+r)\Etwobar$, we get that $T_rE'=(1+r)E'$, as claimed.

Let $u\not=1$ be an integer dividing~$N$. We denote by $\Etwoubar{u}$ the reduction modulo~$\ell$ of the classical characteristic-$0$ Eisenstein series $\Etwou{u}\in\ModSpace[M][2][0][u]$ defined by~:
\begin{equation}\label{eq:def_E2u}
\Etwou{u}(\tau)=E_2(\tau)-uE_2(u\tau)=\frac{u-1}{24}+\sum_{n\ge 1}\Big(\sum_{\substack{0<m\mid n\\ u\nmid m}}m\Big)q^n .
\end{equation}
If $p$ is a prime divisor of~$N$, recall that we have~:
\begin{equation*}
\begin{array}{rcl}
\Up\Etwobar & = & \Etwoubar{p}+p\Etwobar;\\
\Up\Etwoubar{u} & = &
\left\{
\begin{array}{ll}
\Etwoubar{p}+(1+p)\Etwoubar{u}-\Etwoubar{pu} & \textrm{if }p\nmid u \\
\Etwoubar{p}+p\Etwoubar{u/p} & \textrm{if $p\mid u$ and }p\not=u  \\
\Etwoubar{p} & \textrm{if }p=u.
\end{array}
\right.
\end{array}
\end{equation*}
So, let $p$ be a prime divisor of~$N$. We have~:
\begin{align*}
(a_p\Up-p\Id)\Up\Etwobar & = ((a_p\Up-p\Id))(\Etwoubar{p}+p\Etwobar) \\
			    & = p^2(a_p-1)\Etwobar+(a_p-p+pa_p)\Etwoubar{p}.
\end{align*}
If $a_p=+1$, then we get $(a_p\Up-p\Id)\Up\Etwobar=\Etwoubar{p}=(a_p\Up-p\Id)\Etwobar$ which is the desired result. On the other hand, if $a_p=-1$, then, by the assertion~(\ref{item:congruence_weight_2}) proved above, we have $p\equiv -1\pmod{\ell}$ and the previous equality between forms over~$\Fellbar$ thus gives~:
\begin{equation*}
(a_p\Up-p\Id)\Up\Etwobar=-2\Etwobar+\Etwoubar{p}=-(a_p\Up-p\Id)\Etwobar.
\end{equation*}
To finish the proof, it now remains to compute the filtration of~$E'$ and the first two terms of its Fourier expansion at infinity. Let $s=\sharp\{1\le i\le t\mid a_{p_i}(f)=+1\}$. If $0<s<t$, we may assume without loss of generality that~:
\begin{equation*}
N=p_1\cdots p_s\cdot p_{s+1}\cdots p_t\quad\textrm{with }
\left\{
\begin{array}{rcll}
\Up[p_i] f & = & f & \textrm{for all }1\le i\le s\\
\Up[p_i] f & = & -f & \textrm{for all }s+1\le i\le t.
\end{array}
\right.
\end{equation*}
By induction on~$t$, we prove that~:
\begin{equation*}
E'=\delta_{(s=0)}2^t\Etwobar+
\sum_{\substack{(k,l)\in \{0,\ldots,s\}\times\{0,\ldots,t-s\} \\ (k,l)\not=(0,0)}}(-1)^{k+1}\sum_{\substack{1\le i_1<\cdots<i_k\le s \\ s+1\le j_1<\cdots<j_l\le t}}\Etwoubar{p_{i_1}\cdots p_{i_k}\cdot p_{j_1}\cdots p_{j_l}}
\end{equation*}
where
\begin{equation*}
\delta_{(s=0)}=\displaystyle{\left\{\begin{array}{ll}
                                                    1 & \textrm{if }s=0 \\
						    0 & \textrm{otherwise}
                                                    \end{array}
\right.}
\end{equation*}
and the condition $1\le i_1<\cdots<i_k\le s$ or $s+1\le j_1<\cdots<j_l\le t$ is empty if $s=0$ or $s=t$ respectively. From this equality it follows the assertion on the filtration. Moreover an easy computation using Newton's binomial theorem and~(\ref{eq:def_E2u}) proves the assertions on the first two Fourier coefficients.
\end{proof}
Let us now finish the proof of Thm.~\ref{thm:square_free_level_case}. According to~(\ref{eq:Faltings_Jordan_weight_2}) and the previous proposition, we have~:
\begin{equation*}
a_n(\fbar)=a_n(E')\quad\textrm{for all prime-to-$\ell$ integers~$n$},
\end{equation*}
where $\fbar$ denotes the modular form over~$\Fellbar$ obtained by reduction of~$f$ modulo~$\lambda$. Since $\ell\ge 5>k+1=3$, Katz' theory (\cite[Cor.~(3)]{Kat77}) actually shows that $\fbar=E'$. Thus $E'$ has filtration~$2$ and we cannot have $(a_{p_1}(f),\ldots,a_{p_t}(f))=(-1,\ldots,-1)$. Moreover, the constant term of~$E'$ at infinity must vanish and when $(a_{p_1}(f),\ldots,a_{p_t}(f))=(+1,\ldots,+1)$, this gives the congruence stated in the theorem.

    \subsection{Proof of Theorem~\ref{th:last_case}}

Assume~$\rhobar$ reducible with $\ell\nmid N$ and $\ell+1\ge k$. As in \S~\ref{ss:proof_general_thm}, we have 
\begin{equation}\label{eq::Faltings_Jordan_decomposition_last_case}
\rhobarss\simeq\nubar\oplus\nubar^{-1}\cyclomod
\end{equation}
where $\nubar$ is a character of conductor~$\condnu$ such that $N(\rhobarss)=\condnu^2\mid N$. So, in particular, we have~$\condnu\mid c$. 

If $\condnu\not=c$, then we necessarily are in a degeneracy case as described in~\S\ref{ss:degeneration_cases}, with~$e_p=2$ at some prime divisor~$p$ of~$c$. Therefore, $v_p(N)=2$ and by Prop.~\ref{prop:Carayol_Livne}, we have $p\equiv\pm1\pmod{\ell}$.

We can thus assume, from now on, that $\condnu=c$. Let us denote by $\nu$ the Teichm\"uller lift of~$\nubar$, viewed as a primitive Dirichlet character modulo~$c$. 

Let $1\le i\le t$. Comparing the restriction to a decomposition group at~$p_i$ of~$\rhobarss$ with the local representation given by~(\ref{eq:local_description_at_Steinberg_primes}) we get the following equality between sets of characters of~$G_{p_i}$~:
\begin{equation*}
\left\{\nubar,\nubar^{-1}\cyclomod\right\}=\left\{\lambda(a_{p_i})\cyclomod,\lambda(a_{p_i})\right\},
\end{equation*}
where $\lambda(a_{p_i})$ is the quadratic character defined in~\S\ref{ss:Steinberg}.

Assume that for some $1\le i\le t$, we have $\nubar=\lambda(a_{p_i})\cyclomod$ (again, as characters of~$G_{p_i}$). Since $a_{p_i}=\pm1$, it then follows that $\ell$ divides the norm of $\nu(p_i)^2-p_i^2$. 

From now on, we will therefore assume that $\nubar=\lambda(a_{p_i})$ for every $1\le i\le t$. It then follows that $\nubar(p_i)=a_{p_i}\pmod{\ell}$. Since $\condnu>1$, we may consider the Eisenstein series
\begin{equation*}
E(\tau)=\sum_{n\ge 1}\sigma_{1}^{\nu}(n)q^n \in\ModSpace[M][2][0][\condnu^2]
\end{equation*}
introduced in \S\ref{ss:E}. This is an eigenform for all the Hecke operators at level~$\Gamma_0(\condnu^2)$.

  \subsubsection{The Eisenstein series $E'$}

Put 
\begin{equation*}
E'(\tau)=\left[\prod_{i=1}^t(\mathcal{U}_{p_i}-p_i\nu^{-1}(p_i)\Id)\right]E(p_1\cdots p_t\tau)\in\ModSpace[M][2],
\end{equation*}
where $\mathcal{U}_{p_i}$ denotes the $p_i$-th Hecke operator acting on~$\ModSpace[M][2]$.
In expanded form, we have~:
\begin{equation}\label{eq:E'_expanded_form}
E'(\tau)=E+\sum_{j=1}^t(-1)^j\sum_{1\le i_1<\cdots<i_j\le t}p_{i_1}\cdots p_{i_j}\nu^{-1}(p_{i_1}\cdots p_{i_j}) E(p_{i_1}\cdots p_{i_j}\tau).
\end{equation}
As before let us denote by $L$ the field generated by the values of~$\nu$ and by $M$ the compositum of $L$ and~$K$. The following lemma is crucial.
\begin{lemma}\label{lem:strong_cong_f_E'}
The Eisenstein series $E'$ is a normalized eigenform for all the Hecke operators at level~$\Gamma_0(N)$. Moreover, there exists a prime ideal~$\mathcal{L}$ above~$\ell$ in the integer ring of~$M$ such that~:
\begin{equation*}
a_r\equiv a_r(E')\pmod{\mathcal{L}},\quad\textrm{for all primes r}\not=\ell.
\end{equation*}
\end{lemma}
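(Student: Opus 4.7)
The plan is to build $E'$ from $E$ by iteratively applying the standard $p$-stabilization at each Steinberg prime $p_i$. First I would check that $E$ is a Hecke eigenform at level~$\Gamma_0(\condnu^2)$: for $r\nmid\condnu$ the classical Eisenstein theory gives $T_rE=(\nu(r)+r\nu^{-1}(r))E$, and for $r\mid\condnu$, splitting the divisor sum in $\sigma_1^{\nu}(rn)$ according to whether $r$ divides the summation index (and using $\nu(r)=\nu^{-1}(r)=0$) shows $\sigma_1^{\nu}(rn)=0$, hence $\Up[r] E=0$.

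Since each $p_i$ is coprime to $\condnu=c$ by hypothesis, the Hecke polynomial at~$p_i$ splits as
\begin{equation*}
X^2-(\nu(p_i)+p_i\nu^{-1}(p_i))X+p_i=(X-\nu(p_i))(X-p_i\nu^{-1}(p_i)).
\end{equation*}
The standard $p$-stabilization then asserts that $E(\tau)-p_i\nu^{-1}(p_i)E(p_i\tau)$ is an eigenform for $\Up[p_i]$ with eigenvalue $\nu(p_i)$ at level~$\Gamma_0(p_i\condnu^2)$, and the remaining Hecke eigenvalues are preserved thanks to the identity $\Up[p_i] V_{p_i}=\Id$ (where $V_{p_i}\colon F(\tau)\mapsto F(p_i\tau)$) and the fact that $V_{p_i}$ commutes with $T_r$ and $\Up[r]$ for $r\neq p_i$. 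Iterating this construction over the $t$ distinct Steinberg primes yields precisely the form~$E'$ of~(\ref{eq:E'_expanded_form}), which is therefore a simultaneous eigenform at level~$\Gamma_0(N)$ with $T_rE'=(\nu(r)+r\nu^{-1}(r))E'$ for $r\nmid N$, $\Up[p_i] E'=\nu(p_i)E'$, and $\Up[r] E'=0$ for $r\mid c$. That $E'$ is normalized follows since only the term $E$ in~(\ref{eq:E'_expanded_form}) contributes to the coefficient of~$q$.

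For the congruence, I would choose a prime $\mathcal{L}$ of the ring of integers of $M$ above $\ell$ compatible both with $\lambda$ on $K$ and with the Teichm\"uller reduction $\nu\mapsto\nubar$ on $L$. By~(\ref{eq::Faltings_Jordan_decomposition_last_case}) and the Chebotarev density theorem, one has $a_r\equiv\nubar(r)+r\nubar^{-1}(r)\equiv a_r(E')\pmod{\mathcal{L}}$ for every prime $r\nmid\ell N$. For $r\mid c$ one has $r^2\mid N$ and therefore $a_r=0=a_r(E')$ by~\cite[Th.~4.6.17]{Miy06}. For $r=p_i$ the assumption made just before the construction of~$E'$ states that $\nubar(p_i)=a_{p_i}$, giving $a_{p_i}(E')=\nu(p_i)\equiv a_{p_i}\pmod{\mathcal{L}}$. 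The main subtlety will be to track levels during the iterated $p$-stabilization and, in particular, to verify that the vanishing $\Up[r] E=0$ for $r\mid c$ really survives all $t$ stabilizations.
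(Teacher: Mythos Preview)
Your proposal is correct and follows essentially the same route as the paper. The only difference is packaging: you invoke the standard $p$-stabilization mechanism (factoring the Hecke polynomial at each $p_i$ and using $\Up[p_i]V_{p_i}=\Id$) to read off the $\Up[p_i]$-eigenvalue $\nu(p_i)$, whereas the paper verifies this directly by computing $(\mathcal{U}_{p_j}^2-p_j\nu^{-1}(p_j)\mathcal{U}_{p_j})E(p_1\cdots p_t\tau)$ via \cite[Rk.~3.59]{Shi94}; the congruence part and the treatment of primes dividing~$c$ are identical in both.
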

\begin{proof}
The Eisenstein series $E'$ is clearly normalized and since $\ell$ is co-prime to~$N$, this is an eigenfunction for the $T_{\ell}$-operator acting on~$\ModSpace[M][2]$. By isomorphism~(\ref{eq::Faltings_Jordan_decomposition_last_case}) and assumption $\nubar(p_i)=a_{p_i}\pmod{\ell}$, $1\le i\le t$, there exists a prime ideal~$\mathcal{L}$ above~$\ell$ in the integer ring of~$M$ such that~:
\begin{equation*}
\nu(r)+\nu^{-1}(r)r\equiv a_r\pmod{\mathcal{L}},\quad\textrm{ for every prime }r\nmid\ell N
\end{equation*}
and $\nu(p_i)\equiv a_{p_i}\pmod{\mathcal{L}}$ for any $1\le i\le t$. Let $r$ be a prime. If $r$ does not divide $\ell N$, then $E'$ is a $T_r$-eigenfunction with eigenvalue $a_r(E')=\nu(r)+\nu^{-1}(r)r$ which is congruent to $a_r$ modulo~$\mathcal{L}$. If else $r$ divides~$c$ (and thus $N$), then $E'$ is a $\mathcal{U}_r$-eigenfunction with corresponding eigenvalue $0=a_r$. Finally, if $r=p_j\in\{p_1,\ldots,p_t\}$, then we have
\begin{equation*}
\left(\mathcal{U}_{p_j}E'\right)(\tau) = \left(\prod_{\substack{i=1 \\ i\not= j}}^t(\mathcal{U}_{p_i}-p_i\nu^{-1}(p_i)\Id)\right)\cdot
\left(\mathcal{U}_{p_j}^2-p_j\nu^{-1}(p_j)\mathcal{U}_{p_j}\right)E(p_1\cdots p_t\tau).
\end{equation*}
Besides, according to \cite[Rk.~3.59]{Shi94}, we have~:
\begin{align*}
 &  \left(\mathcal{U}_{p_j}^2-p_j\nu^{-1}(p_j)\mathcal{U}_{p_j}\right)E(p_1\cdots p_t\tau) \\ 
 =\ &  (\nu(p_j)+\nu^{-1}(p_j)p_j)E(\widehat{p_1\cdots p_t}\tau) - p_jE(p_1\cdots p_t\tau)- p_j\nu^{-1}(p_j)E(\widehat{p_1\cdots p_t}\tau) \\
 =\ & \nu(p_j)\left(\mathcal{U}_{p_j}-p_j\nu^{-1}(p_j)\Id\right)E(p_1\cdots p_t\tau),
\end{align*}
where $\displaystyle{\widehat{p_1\cdots p_t}=\prod_{\substack{i=1 \\ i\not= j}}^tp_i}$. This equality proves that $E'$ is a $\mathcal{U}_{p_j}$-eigenfunction with corresponding eigenvalue~$\nu(p_j)$ and the congruence $\nu(p_j)\equiv a_{p_j}\pmod{\mathcal{L}}$  eventually completes the proof of the lemma.
\end{proof}

    \subsubsection{Constant term at $1/\condnu$ and end of the proof of Theorem~\ref{th:last_case}}
Since $E'$ vanishes at~$\infty$, we compute its constant term at another specific cusp, where it is non-vanishing, namely~$1/\condnu$. Put
\begin{equation*}
\gamma=\begin{pmatrix}
     1 & 0 \\
     \condnu & 1\\                                                                                                                                       \end{pmatrix}\in\SL(2,\Z).
\end{equation*}
We postpone the proof of the following proposition to~\S\ref{sss:proof_prop_cst_term_E2_bar_gamma}.
\begin{prop}\label{prop:cst_term_E2_bar_gamma}
The constant term of the Fourier expansion of~$E'|_2\gamma$ is the non-zero algebraic number in~$\mathcal{O}_L[1/\condnu^2](\mu_{\condnu^2})$~:
\begin{equation*}
\Upsilon'=-\nu(-1)\left(\frac{\condnu}{c_0}\right)^2 
\frac{W\left((\nu^{2})_0\right)}{W(\nu)}
\frac{B_{2,(\nu^{2})_0^{-1}}}{4}
\left(\prod_{i=1}^t\left(1-p_i^{-1}\right)\right)
\cdot\left(\prod_{p\mid \condnu}\left(1-(\nu^2)_0(p)p^{-2}\right)\right),
\end{equation*}
where the second product runs over the primes and $(\nu^2)_0$ is the primitive Dirichlet character associated to $\nu^2$ of modulus~$c_0\mid \condnu$.
\end{prop}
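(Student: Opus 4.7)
The plan is to use linearity of the slash operator together with the expanded form~(\ref{eq:E'_expanded_form}) of $E'$ to reduce the problem to computing the constant term at infinity of each summand $E(d\tau)|_2\gamma$, where $d$ runs through the divisors of $P=p_1\cdots p_t$, and then to sum up with the right coefficients. Since the constant terms of $E$ itself at every cusp of $\Gamma_0(\condnu^2)$ are given by Proposition~\ref{prop:constant_terms_E}, the task is to recast each translate $E(d\tau)|_2\gamma$ as a slash of $E$ by a matrix in $\SL(2,\Z)$ sending $\infty$ to a cusp of $\Gamma_0(\condnu^2)$ with denominator~$\condnu$, composed with an upper-triangular matrix whose scaling effect on the constant term is explicit.

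Concretely, writing $\sigma_d=\begin{pmatrix}d&0\\0&1\end{pmatrix}$ and using $E(d\tau)=d^{-1}\bigl(E|_2\sigma_d\bigr)$, I would form $\sigma_d\gamma=\begin{pmatrix}d&0\\\condnu&1\end{pmatrix}$ and factor it as $\gamma_d T_d$ with $\gamma_d\in\SL(2,\Z)$ and $T_d$ upper triangular of determinant~$d$. Since $\gcd(d,\condnu)=1$, B\'ezout guarantees such a factorization, with $\gamma_d\infty=d/\condnu$ corresponding to the cusp of $\Gamma_0(\condnu^2)$ represented by $v=\condnu$ and $u\equiv d\pmod{\condnu}$. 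Proposition~\ref{prop:constant_terms_E} applied with $k=2$ then yields the constant term of $E|_2\gamma_d$ at infinity, in which the sole $d$-dependence sits in the factor $-\nu(-d)=-\nu(-1)\nu(d)$; absorbing the upper-triangular piece $T_d$ into the argument multiplies this by $d^{-2}$, so the constant term of $E(d\tau)|_2\gamma$ is $d^{-2}\Upsilon_d$.

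Summing, the factor $d\,\nu^{-1}(d)$ from~(\ref{eq:E'_expanded_form}) cancels $d^{-2}\nu(d)$ down to $(-1)^{\omega(d)}d^{-1}$, pulling out a $d$-independent factor that matches the claimed expression up to the arithmetic sum $\sum_{d\mid P}(-1)^{\omega(d)}/d=\prod_{i=1}^t(1-p_i^{-1})$. Non-vanishing of $\Upsilon'$ then follows factor by factor: the Bernoulli factor is nonzero by Proposition~\ref{prop:FE_for_L_functions}, the Gauss-sum quotient by Lemma~\ref{lem:Gauss_sum}, and the Euler-type products and primes by inspection. The integrality claim in $\mathcal{O}_L[1/\condnu^2](\mu_{\condnu^2})$ is inherited directly from the last sentence of Proposition~\ref{prop:constant_terms_E}, since the additional factors introduced by the matrix factorization are rational with denominators dividing~$\condnu^2$.

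The main obstacle I expect is the careful bookkeeping of the matrix factorization $\sigma_d\gamma=\gamma_d T_d$ and of how the weight-two slash operator rescales the constant term when $T_d$ is absorbed into the argument; one must also choose a representative of the cusp $d/\condnu$ so that the evaluation of $-\nu(-u)$ in Proposition~\ref{prop:constant_terms_E} gives exactly $-\nu(-d)$ and the cancellation with $d\,\nu^{-1}(d)$ is transparent. Once this setup is clean, the remainder is a multiplicativity argument over divisors of $P$ and the verification of non-vanishing and integrality is routine.
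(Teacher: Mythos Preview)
Your approach is correct and genuinely different from the paper's. The paper proves this proposition by going back to the series definition: it writes $G^{p_{\ibar}}$ as a linear combination of the elementary weight-$2$ Eisenstein series $G_2^{v,p_{\ibar}}(\tau)=G_2^v(p_{\ibar}\tau)$, computes the constant term of each $G_2^{v,p_{\ibar}}|_2\gamma$ by hand from the lattice sum (their Lemma~\ref{lem:cst_term_G2vpi}), and then sums to obtain $\Upsilon_{\ibar}=\nu(p_{\ibar})p_{\ibar}^{-2}\Upsilon_0$ (their Lemma~\ref{lem:cst_term_G2pi}); only at the very end do they plug into~(\ref{eq:E'_expanded_form}) and collapse the sum into $\prod_i(1-p_i^{-1})$. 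Your route bypasses both lemmas: the factorization $\sigma_d\gamma=\gamma_d T_d$ with $\gamma_d\in\SL(2,\Z)$, $\gamma_d\infty=d/\condnu$, and $T_d$ upper triangular of determinant~$d$ lets you read off the constant term of $E(d\tau)|_2\gamma$ directly from the $(k,u,v)=(2,d,\condnu)$ case of Proposition~\ref{prop:constant_terms_E}, picking up precisely the factor $d^{-2}\nu(d)$ whose cancellation against $d\,\nu^{-1}(d)$ yields the product over the~$p_i$. This is shorter and conceptually cleaner, since it reuses Proposition~\ref{prop:constant_terms_E} rather than redoing an analogous lattice-sum computation.

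One small inaccuracy: your last sentence on integrality is not right as written. The extra rational factors you introduce are powers of $d^{-1}$ with $d\mid p_1\cdots p_t$, and these are coprime to~$\condnu$, so their denominators do \emph{not} divide~$\condnu^2$. (Indeed the displayed formula for~$\Upsilon'$ visibly contains $\prod_i(1-p_i^{-1})$.) The paper's own proof does not address the ring claim either, and the substance of the proposition---the explicit formula and its non-vanishing---is established correctly by your argument.
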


Using this proposition, we now complete the proof of Thm.~\ref{th:last_case}. Let $\Theta$ be the Katz' operator on modular forms over~$\Fellbar$ whose action on $q$-expansions is given by $q\frac{d}{dq}$ (denoted $A\theta$ in~\cite{Kat77}). Assume $\ell>k+1=3$. Lemma~\ref{lem:strong_cong_f_E'} implies that $\Theta(\fbar)=\Theta(\Ebar)$ where $\fbar$ and $\Eprimebar$ are the modular forms over~$\Fellbar$ obtained by reduction modulo~$\mathcal{L}$ of $f$ and $E'$ respectively. Moreover Katz has proved that if $\ell>3$, then $\Theta$ is injective (\cite[Cor.~(3)]{Kat77}). Under this assumption, it thus follows that the Eisenstein series $E'$ becomes cuspidal after reduction.

Put $\epsilon=(\nu^2)_0^{-1}$. By Prop.~\ref{prop:cst_term_E2_bar_gamma} and using the assumption $\condnu=c$, we therefore have that $\ell$ divides the numerator of the norm of~:
\begin{equation*}
\Upsilon'=\pm\left(\frac{c}{c_0}\right)^2 \frac{W\left(\epsilon^{-1}\right)}{W(\nu)}
\frac{B_{2,\epsilon}}{4}
\left(\prod_{i=1}^t\left(1-p_i^{-1}\right)\right)
\cdot\left(\prod_{p\mid c}\left(1-\epsilon^{-1}(p)p^{-2}\right)\right).
\end{equation*}
By Lemma~\ref{lem:Gauss_sum}, the prime divisors of the norm of~$W\left(\epsilon^{-1}\right)/W(\nu)$ divide~$N$ and therefore are co-prime to~$\ell$. The same obviously holds for $c/c_0$. It thus follows that either $p_i\equiv1\pmod{\ell}$ for some $1\le i\le t$ or $\ell$ divides the norm of either~$p^2-\epsilon^{-1}(p)$ for some~$p$ dividing $c$ or the norm of the numerator of~$B_{2,\epsilon}/4$. This completes the proof of Thm.~\ref{th:last_case}.

    \subsubsection{Proof of Proposition~\ref{prop:cst_term_E2_bar_gamma}}\label{sss:proof_prop_cst_term_E2_bar_gamma}

Let us first introduce notation as in the proof of Prop.~\ref{prop:constant_terms_E}. Put~:
\begin{equation*}
G=\frac{C_2W(\nu)}{\condnu^2}E,\quad\textrm{where }C_2=-4\pi^2
\end{equation*}
and similarly
\begin{equation*}
G'=\frac{C_2W(\nu)}{\condnu^2}E'.
\end{equation*}
For simplicity, we shall denote by $\ibar$ the elements of 
\begin{equation*}
\mathcal{N}=\{(i_1,\ldots,i_j)\textrm{ such that }j\in\{1,\ldots,t\}\textrm{ and }1\le i_1<\cdots<i_j\le t\}.
\end{equation*}
If $\ibar=(i_1,\ldots,i_j)\in\mathcal{N}$, we put~:
\begin{equation*}
p_{\ibar}=p_{i_1}\cdots p_{i_j}\quad\textrm{and}\quad a_{\ibar}=a_{p_{i_1}}\cdots a_{p_{i_j}}.
\end{equation*}
Let $v=\overline{(c_v,d_v)}\in(\Z/\condnu^2\Z)^2$ of order~$\condnu^2$. Following~\cite[\S4.6]{DiSh05}, define
\begin{equation}\label{eq:G2v}
G_2^{v}(\tau)=\frac{1}{(c_v\tau+d_v)^2}+\frac{1}{\condnu^4}\sideset{}{'}\sum_{d\in\Z}\frac{1}{\left(\frac{c_v\tau+d_v}{\condnu^2}-d\right)^2}+\frac{1}{\condnu^4}\sum_{c\not=0}\sum_{d\in\Z}\frac{1}{\left(\frac{c_v\tau+d_v}{\condnu^2}-c\tau-d\right)^2}
\end{equation}
where the primed summation notation means to sum over non-zero integers. For any $\ibar\in\mathcal{N}$ and any $v\in(\Z/\condnu^2\Z)^2$ of order~$\condnu^2$, put 
\begin{equation*}
\displaystyle{G_2^{v,p_{\ibar}}(\tau)=G_2^v(p_{\ibar}\tau)}\quad\textrm{and}\quad G^{p_{\ibar}}(\tau)=G(p_{\ibar}\tau).
\end{equation*}
According to~\cite[\S4.2]{DiSh05} and the definition of~$E$ (cf. \S\ref{ss:E}), we have
\begin{equation*}
G=\frac{1}{2}\sum_{i,j,l=0}^{\condnu-1}\nu(ij)G_2^{\overline{(i\condnu,j+l\condnu)}}
\end{equation*}
and therefore
\begin{equation}\label{eq:decomp_Gpi}
G^{p_{\ibar}}=\frac{1}{2}\sum_{i,j,l=0}^{\condnu-1}\nu(ij)G_2^{\overline{(i\condnu,j+l\condnu)},p_{\ibar}}.
\end{equation}

\begin{lemma}\label{lem:cst_term_G2vpi}
Let $v=\overline{(c_v,d_v)}\in(\Z/\condnu^2\Z)^2$ of order~$\condnu^2$. The constant term of $G_2^{v,p_{\ibar}}|_2\gamma$ is 
\begin{equation*}
\Upsilon_{v,\ibar}=\vartheta(\overline{c_vp_{\ibar}+d_v\condnu})\left(\frac{1}{p_{\ibar}}\right)^2\zeta^{\overline{d_v/p_{\ibar}}}(2)
\end{equation*}
where the bar means reduction modulo~$\condnu^2$,
\begin{equation*}
\vartheta(\overline{n})=\displaystyle{\left\{\begin{array}{ll}
                                                    1 & \textrm{if }n\equiv0\pmod{\condnu^2} \\
						    0 & \textrm{otherwise}
                                                    \end{array}
\right.},\quad \zeta^{\overline{n}}(2)=\displaystyle{\sideset{}{'}\sum\limits_{m\equiv n\pmod{\condnu^2}}\frac{1}{m^2}},
\end{equation*}
and the primed summation notation means to sum over non-zero integers. 
\end{lemma}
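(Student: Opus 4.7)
The plan is to reorganize the three-term expression~(\ref{eq:G2v}) into a single Hecke-summed lattice sum, apply the slash operator, and read off the constant term by letting $\tau\to i\infty$. Matching the three pieces of~(\ref{eq:G2v}) with contributions from $(c,d)=(0,0)$, from $c=0$ with $d\neq 0$, and from $c\neq 0$ respectively, one verifies that
\[
G_2^v(\tau)=\sideset{}{^{(H)}}\sum_{\substack{(C,D)\in\Z^2\\ (C,D)\equiv(c_v,d_v)\pmod{\condnu^2}}}\frac{1}{(C\tau+D)^2},
\]
where the superscript $(H)$ indicates the Hecke summation order inherited from~(\ref{eq:G2v}) (first over~$D$, then over~$C$).

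Since $\gamma\tau=\tau/(\condnu\tau+1)$, one computes
\[
Cp_{\ibar}\gamma\tau+D=\frac{(Cp_{\ibar}+D\condnu)\tau+D}{\condnu\tau+1}.
\]
The factor $(\condnu\tau+1)^{-2}$ from the slash operator cancels the square of the denominator, giving
\[
G_2^{v,p_{\ibar}}|_2\gamma(\tau)=\sideset{}{^{(H)}}\sum_{(C,D)\equiv(c_v,d_v)\pmod{\condnu^2}}\frac{1}{\bigl((Cp_{\ibar}+D\condnu)\tau+D\bigr)^2}.
\]
As $\tau\to i\infty$, each summand tends to $0$ unless $Cp_{\ibar}+D\condnu=0$, in which case it contributes $1/D^2$. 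Since the primes $p_i$ are coprime to $c=\condnu$, one has $\gcd(p_{\ibar},\condnu)=1$, so the equation $Cp_{\ibar}+D\condnu=0$ is parametrized by $(C,D)=(-m\condnu,mp_{\ibar})$ for $m\in\Z$. The residue condition $(C,D)\equiv(c_v,d_v)\pmod{\condnu^2}$ then becomes $mp_{\ibar}\equiv d_v$ and $-m\condnu\equiv c_v\pmod{\condnu^2}$; using the invertibility of $p_{\ibar}$ modulo~$\condnu^2$, the first congruence gives $m\equiv d_v/p_{\ibar}\pmod{\condnu^2}$, and substituting this into the second yields exactly the compatibility condition $c_vp_{\ibar}+d_v\condnu\equiv 0\pmod{\condnu^2}$, i.e., $\vartheta(\overline{c_vp_{\ibar}+d_v\condnu})=1$. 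When this holds, summing $1/(mp_{\ibar})^2$ over the resulting non-zero~$m$ produces $p_{\ibar}^{-2}\zeta^{\overline{d_v/p_{\ibar}}}(2)$, which is the claimed value of~$\Upsilon_{v,\ibar}$.

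The one technical point is to justify the term-by-term passage to the limit, since the weight-$2$ Eisenstein series is only Hecke-convergent. This is handled by keeping the three pieces of~(\ref{eq:G2v}) separate: after the substitution $\tau\mapsto p_{\ibar}\gamma\tau$ and multiplication by $(\condnu\tau+1)^{-2}$, the first piece is a single term, the second piece is absolutely convergent in~$d$, and the third piece splits into $c$-slices each of which is a classical Lipschitz-type sum admitting a standard Fourier expansion. In each case only the pairs $(C,D)$ with $Cp_{\ibar}+D\condnu=0$ contribute to the constant Fourier coefficient, and their total matches the formula above.
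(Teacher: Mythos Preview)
Your argument is correct and, once unpacked, follows essentially the same route as the paper's own proof. You repackage the three pieces of~(\ref{eq:G2v}) into a single Hecke-summed lattice sum, apply~$\gamma$, and read off which summands have vanishing $\tau$-coefficient; the paper instead keeps the three pieces $A$ and $B$ explicit throughout and computes their constant terms separately before recombining via the same change of variable $D=mp_{\ibar}$. Your final paragraph, where you concede that the rigorous justification of the limit requires returning to the three-piece decomposition and using the Lipschitz expansion on each $c$-slice, is exactly the content of the paper's explicit $A$/$B$ computation, so the two arguments differ only in presentation.
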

\begin{proof}
We first compute $G_2^{v,p_{\ibar}}|_2\gamma$ using~(\ref{eq:G2v}). We find~:
\begin{multline*}
\left(G_2^{v,p_{\ibar}}|_2\gamma\right)(\tau)=\frac{1}{(c_vp_{\ibar}\tau+d_v(\condnu\tau+1))^2}
+\sideset{}{'}\sum_{d\in\Z}\frac{1}{(c_vp_{\ibar}\tau+d_v(\condnu\tau+1)-\condnu^2d(\condnu\tau+1))^2}\\
+\sum_{c\not=0}\sum_{d\in\Z}\frac{1}{(c_vp_{\ibar}\tau+d_v(\condnu\tau+1)-\condnu^2(cp_{\ibar}\tau+d(\condnu\tau+1)))^2}.
\end{multline*}
In other words, we have $\left(G_2^{v,p_{\ibar}}|_2\gamma\right)(\tau)=A+B$, where 
\begin{equation*}
A=\frac{1}{((c_vp_{\ibar}+d_v\condnu)\tau+d_v)^2}
+\sideset{}{'}\sum_{d\in\Z}\frac{1}{((c_vp_{\ibar}+d_v\condnu-\condnu^2d\condnu)\tau+d_v-\condnu^2d)^2}
\end{equation*}
and 
\begin{equation*}
B=\sum_{c\not=0}\sum_{d\in\Z}\frac{1}{((c_vp_{\ibar}+d_v\condnu-\condnu^2(cp_{\ibar}+d\condnu))\tau+d_v-\condnu^2d)^2}.
\end{equation*}
Since $\gcd(p_{\ibar},\condnu)=1$, we may assume without loss of generality that $0\le c_vp_{\ibar}+d_v\condnu<\condnu^2$. Therefore the constant term of~$A$ is given by~:
\begin{equation*}
\vartheta(\overline{c_vp_{\ibar}+d_v\condnu})\frac{1}{d_v^2}
\end{equation*}
and the one of~$B$ by~:
\begin{equation*}
\vartheta(\overline{c_vp_{\ibar}+d_v\condnu})\sum_{c\not=0}\sum_{\substack{d\in\Z\\ cp_{\ibar}+d\condnu=0}}\frac{1}{(d_v-\condnu^2d)^2}.
\end{equation*}
Therefore, the constant term of $G_2^{v,p_{\ibar}}|_2\gamma$ is~:
\begin{equation*}
\Upsilon_{v,\ibar}=\vartheta(\overline{c_vp_{\ibar}+d_v\condnu})\sum_{c\in\Z}\sum_{\substack{d\in\Z\\ cp_{\ibar}+d\condnu=0}}\frac{1}{(d_v-\condnu^2d)^2}.
\end{equation*}
Note that if $\vartheta(\overline{c_vp_{\ibar}+d_v\condnu})=1$, then $d_v\not\equiv0\pmod{\condnu^2}$ since $v$ is of order~$\condnu^2$. A change of variable yields~:
\begin{equation*}
\Upsilon_{v,\ibar}=\vartheta(\overline{c_vp_{\ibar}+d_v\condnu})\sum_{c\in\Z}\sum_{\substack{d\in\Z\\ cp_{\ibar}+d\condnu=0\\ (c,d)\equiv v\ (\condnu^2)}}\frac{1}{d^2}
\end{equation*}
and thus
\begin{equation*}
\Upsilon_{v,\ibar}=\vartheta(\overline{c_vp_{\ibar}+d_v\condnu})\sum_{\substack{d\not=0 \\ d\equiv d_v\ (\condnu^2)\\ p_{\ibar}\mid d}}\frac{1}{d^2}
=\vartheta(\overline{c_vp_{\ibar}+d_v\condnu})\sum_{\substack{m\not=0 \\ m\equiv d_v/p_{\ibar}\ (\condnu^2)}}\frac{1}{(p_{\ibar}m)^2}.
\end{equation*}
Finally we get $\Upsilon_{v,\ibar}=\vartheta(\overline{c_vp_{\ibar}+d_v\condnu})/p_{\ibar}^2\cdot\zeta^{\overline{d_v/p_{\ibar}}}(2)$ as asserted.
\end{proof}

Using this lemma and formula~(\ref{eq:decomp_Gpi}), we are now able to compute the constant term of~$G^{p_{\ibar}}|_2\gamma$.
\begin{lemma}\label{lem:cst_term_G2pi}
The constant term of $G^{p_{\ibar}}|_2\gamma$ is 
\begin{equation*}
\Upsilon_{\ibar}=\nu(p_{\ibar})\frac{1}{p_{\ibar}^2}\cdot\Upsilon_0,\quad\textrm{with }
\Upsilon_0=-\nu(-1)W\left((\nu^{2})_0\right)\frac{C_2}{c_0^2}\frac{B_{2,(\nu^{2})_0^{-1}}}{4}
\prod_{p\mid \condnu}\left(1-(\nu^2)_0(p)p^{-2}\right),
\end{equation*}
where $(\nu^2)_0$ is the primitive Dirichlet character associated to $\nu^2$ of modulus~$c_0\mid \condnu$.
\end{lemma}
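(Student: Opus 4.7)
The plan is to substitute the decomposition~(\ref{eq:decomp_Gpi}) of $G^{p_{\ibar}}$ into the slash action and apply the previous lemma termwise. This yields
\begin{equation*}
2\Upsilon_{\ibar}=\frac{1}{p_{\ibar}^2}\sum_{i,j,l=0}^{\condnu-1}\nu(ij)\,\vartheta\bigl(\overline{\condnu(ip_{\ibar}+j+l\condnu)}\bigr)\,\zeta^{\overline{(j+l\condnu)/p_{\ibar}}}(2),
\end{equation*}
exactly mirroring the opening of the proof of Prop.~\ref{prop:constant_terms_E}. Since the $\vartheta$ factor forces $ip_{\ibar}+j\equiv 0\pmod{\condnu}$ and $\nu(ij)\neq 0$ requires $\gcd(ij,\condnu)=1$, one deduces $i\equiv -j/p_{\ibar}\pmod{\condnu}$, which is the first key reduction.

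Next I would use this constraint to rewrite $\nu(ij)=\nu(-j^2/p_{\ibar})=\nu(-1)\nu(j)^2\nu^{-1}(p_{\ibar})$, and collapse the sum on~$l$: as $l$ ranges over $\{0,\ldots,\condnu-1\}$, the residue $\overline{(j+l\condnu)/p_{\ibar}}$ sweeps out the $\condnu$ preimages in $\Z/\condnu^2\Z$ of the single class $j/p_{\ibar}\pmod{\condnu}$, so
\begin{equation*}
\sum_{l=0}^{\condnu-1}\zeta^{\overline{(j+l\condnu)/p_{\ibar}}}(2)=\sideset{}{'}\sum_{m\equiv j/p_{\ibar}\,(\condnu)}\frac{1}{m^2}.
\end{equation*}
Changing variables $j\equiv p_{\ibar}m\pmod{\condnu}$ converts $\nu(j)^2$ into $\nu(p_{\ibar})^2\nu(m)^2$, and the outer sum over $j$ coprime to $\condnu$ becomes an unrestricted primed sum over~$m$. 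Using $\nu^2(-1)=1$ and pairing $m$ with $-m$ then gives $2L(2,\nu^2)$.

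Combining these steps one arrives at $\Upsilon_{\ibar}=\nu(-1)\nu(p_{\ibar})p_{\ibar}^{-2}L(2,\nu^2)$. The final step is to factor out the Euler factors at primes dividing~$\condnu$ to replace $L(2,\nu^2)$ by $L(2,(\nu^2)_0)\prod_{p\mid\condnu}(1-(\nu^2)_0(p)p^{-2})$, and then invoke Prop.~\ref{prop:FE_for_L_functions} with $\psi=(\nu^2)_0$ and $m=k=2$ to express $L(2,(\nu^2)_0)$ in terms of $B_{2,(\nu^2)_0^{-1}}$. Collecting all the factors produces exactly $\Upsilon_{\ibar}=\nu(p_{\ibar})p_{\ibar}^{-2}\Upsilon_0$ with the stated expression for $\Upsilon_0$.

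The main obstacle is really bookkeeping rather than any new idea: one must carefully track the $p_{\ibar}$ appearing inside the $\vartheta$ condition (which twists the modular condition $i\equiv -j/p_{\ibar}$ compared to Prop.~\ref{prop:constant_terms_E}) and keep the character values $\nu(j)^2$ aligned with the change of variable to~$m$, so that the unexpected factor $\nu(p_{\ibar})/p_{\ibar}^2$ emerges cleanly. Once this is done, the remaining $L$-function computation is precisely the one already performed in~(\ref{eq:FE}).
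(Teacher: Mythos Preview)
Your proposal is correct and follows essentially the same route as the paper: decompose $G^{p_{\ibar}}$ via~(\ref{eq:decomp_Gpi}), apply Lemma~\ref{lem:cst_term_G2vpi} termwise, use the $\vartheta$-condition to force $i\equiv -j/p_{\ibar}\pmod{\condnu}$, collapse the $l$-sum and recognize the result as $\nu(-1)\nu(p_{\ibar})p_{\ibar}^{-2}L(2,\nu^2)$, then finish with the Euler-factor relation and Prop.~\ref{prop:FE_for_L_functions}. The only cosmetic difference is that you factor $\nu(-j^2/p_{\ibar})=\nu(-1)\nu(j)^2\nu^{-1}(p_{\ibar})$ and then recover the $\nu(p_{\ibar})$ after the change of variable $j\equiv p_{\ibar}m$, whereas the paper groups the character values as $\nu(-1)\nu(p_{\ibar})\nu^2(j/p_{\ibar})$ from the outset; these are of course the same computation.
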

\begin{proof}
The proof of this lemma is quite similar to the proof of~Prop.~\ref{prop:constant_terms_E}. According to~(\ref{eq:decomp_Gpi}), we have~:
\begin{equation*}
\Upsilon_{\ibar}=\frac{1}{2}\sum_{i,j,l=0}^{\condnu-1}\nu(ij)\Upsilon_{\overline{(i\condnu,j+l\condnu)},\ibar}
\end{equation*}
and thus by Lemma~\ref{lem:cst_term_G2vpi}~:
\begin{equation*}
\Upsilon_{\ibar}=\frac{1}{2}\cdot\frac{1}{p_{\ibar}^2}
\sum_{i,j,l=0}^{\condnu-1}\nu(ij)\vartheta(\overline{i\condnu p_{\ibar}+\condnu(j+l\condnu)})
\zeta^{\overline{d_v/p_{\ibar}}}(2).
\end{equation*}
This yields to~:
\begin{align*}
\Upsilon_{\ibar} & = \frac{1}{2}\cdot\frac{1}{p_{\ibar}^2}
\sum_{l=0}^{\condnu-1}\sum_{\substack{j=0\\ \gcd(j,\condnu)=1}}^{\condnu-1}
\nu\left(-\frac{j^2}{p_{\ibar}}\right)
\zeta^{\overline{d_v/p_{\ibar}}}(2)\\
 & =  \frac{1}{2}\cdot\frac{1}{p_{\ibar}^2}\nu(p_{\ibar})\nu(-1)
 \sum_{l=0}^{\condnu-1}\sum_{\substack{j=0\\ \gcd(j,\condnu)=1}}^{\condnu-1} 
\nu\left((j^2/p_{\ibar})^2\right)
\sideset{}{'}\sum_{m\equiv (j+l\condnu)/p_{\ibar}\ (\condnu^2)}\frac{1}{m^2}\\
 & = \frac{1}{p_{\ibar}^2}\nu(p_{\ibar})\nu(-1)L(2,\nu^2).\\
\end{align*}
Let $(\nu^2)_0$ be the primitive character associated to $\nu^2$ of modulus~$c_0\mid \condnu$. We have~:
\begin{equation*}
L(2,\nu^2)=L\left(2,(\nu^2)_0\right)\prod_{p\mid \condnu}\left(1-(\nu^2)_0(p)p^{-2}\right).
\end{equation*}
Applying Prop.~\ref{prop:FE_for_L_functions} to $\psi=(\nu^2)_0$ and $m=k$, we get~:
\begin{equation*}
L(2,(\nu^2)_0)=-W\left((\nu^{2})_0\right)\frac{C_2}{c_0^2}\frac{B_{2,(\nu^{2})_0^{-1}}}{4}\not=0
\end{equation*}
and thus
\begin{equation*}
\Upsilon_{\ibar}=-\frac{1}{p_{\ibar}^2}\nu(p_{\ibar})\nu(-1)W\left((\nu^{2})_0\right)\frac{C_2}{c_0^2}
\frac{B_{2,(\nu^{2})_0^{-1}}}{4}
\prod_{p\mid \condnu}\left(1-(\nu^2)_0(p)p^{-2}\right),
\end{equation*}
as claimed.
\end{proof}

Let us now complete the proof of Prop.~\ref{prop:cst_term_E2_bar_gamma}. With the notation introduced at the beginning of this paragraph and Eq.~(\ref{eq:E'_expanded_form}), we have~:
\begin{equation*}
G'|_2\gamma=G|_2\gamma+\sum_{\ibar\in\mathcal{N}}(-1)^{\sharp\ibar}
p_{\ibar}\nu^{-1}(p_{\ibar})G^{p_{\ibar}}|_2\gamma.
\end{equation*}
Therefore, according to Prop.~\ref{prop:constant_terms_E} and Lem.~\ref{lem:cst_term_G2pi}, the constant term of~$G'|_2\gamma$ is~:
\begin{equation*}
\Upsilon_0+\sum_{\ibar\in\mathcal{N}}(-1)^{\sharp\ibar}
p_{\ibar}\nu^{-1}(p_{\ibar})\Upsilon_{\ibar}
=\Upsilon_0\left(1+\sum_{\ibar\in\mathcal{N}}(-1)^{\sharp\ibar}
p_{\ibar}\nu^{-1}(p_{\ibar})\nu(p_{\ibar})\frac{1}{p_{\ibar}^2}\right)
=\Upsilon_0\prod_{i=1}^t\left(1-p_i^{-1}\right)
\end{equation*}
where $(\nu^2)_0$ is the primitive character associated to $\nu^2$ of modulus~$c_0\mid \condnu$. Prop.~\ref{prop:cst_term_E2_bar_gamma} now follows from the normalization $E'=(\condnu^2/(C_2W(\nu)))G'$.

    \section{Dihedral representations}

    \subsection{Preliminaries: twisting and CM forms}

Let $M$ be an integer, $F(\tau)=\sum_{n\ge 1}a_n(F)q^n \in\ModSpace[S][k][0][M]$ and $\psi$ be a Dirichlet character of modulus~$f\ge 1$. Define~:
\begin{equation*}
(F\otimes\psi)(\tau)=\sum_{n\ge 1}a_n(F)\psi(n)q^n .
\end{equation*}
The following result is a special case of~\cite[Prop.~3.64]{Shi94}.
\begin{lemma}\label{lemma:twist}
With the notations above, assume $\psi$ to be a qua\-dratic primitive Dirichlet character. Then $F\otimes\psi$ belongs to~$\ModSpace[S][k][0][\lcm(M,f^2)]$. Moreover, if $F$ is a normalized Hecke eigenform for the Hecke operators~$\{T_p\}_{p\nmid M}$, then $F\otimes\psi$ is a normalized Hecke eigenform for the Hecke operators~$\{T_p\}_{p\nmid fM}$ with corres\-ponding eigenvalues~$\{a_p(F)\psi(p)\}_{p\nmid fM}$.
\end{lemma}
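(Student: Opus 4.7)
The lemma is quoted as a special case of Shimura's Proposition~3.64, so my plan is to reproduce its proof in this simplified quadratic-twist setting, which splits cleanly into a modularity statement and a Hecke-eigenvalue computation.

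\medskip

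\noindent\textbf{Step 1 (expressing the twist as a finite sum of translates).} The starting point is the Gauss sum identity for a primitive Dirichlet character $\psi$ of modulus $f$:
\begin{equation*}
\psi(n)=\frac{1}{W(\overline{\psi})}\sum_{a\bmod f}\overline{\psi}(a)\, e^{2i\pi an/f},\qquad n\in\Z,
\end{equation*}
(both sides vanish when $\gcd(n,f)>1$ by primitivity). Substituting this into the Fourier expansion of $F\otimes\psi$ yields
\begin{equation*}
(F\otimes\psi)(\tau)=\frac{1}{W(\overline{\psi})}\sum_{a\bmod f}\overline{\psi}(a)\, F\!\left(\tau+\tfrac{a}{f}\right),
\end{equation*}
which realises $F\otimes\psi$ as a finite $\C$-linear combination of translates of $F$.

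\medskip

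\noindent\textbf{Step 2 (modularity on $\Gamma_0(\lcm(M,f^2))$).} For $\gamma=\begin{pmatrix}\alpha&\beta\\\gamma'&\delta\end{pmatrix}\in\Gamma_0(\lcm(M,f^2))$ and $a\in\Z/f\Z$, a direct matrix computation gives
\begin{equation*}
\begin{pmatrix}1&a/f\\0&1\end{pmatrix}\gamma=\gamma_a\begin{pmatrix}1&b/f\\0&1\end{pmatrix},
\end{equation*}
where $b\equiv\delta^2 a\pmod{f}$ and $\gamma_a\in\Gamma_0(M)$ (here one uses that $f^2\mid \gamma'$, so that the lower-left entry of $\gamma_a$ is still divisible by $M$; this is precisely where the $f^2$ in the level arises). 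Applying $|_k\gamma$ term-by-term to the expression in Step~1 and using $F|_k\gamma_a=F$, I would reindex $a\leftrightarrow b$ and use $\overline{\psi}(\delta^{-2}b)=\overline{\psi}(b)\psi(\delta)^2=\overline{\psi}(b)$ (this is where $\psi$ being quadratic is used) to conclude $(F\otimes\psi)|_k\gamma=F\otimes\psi$. Cuspidality on the enlarged group is automatic since each translate of a cusp form is a cusp form.

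\medskip

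\noindent\textbf{Step 3 (Hecke eigenvalues at primes $p\nmid fM$).} This step is a pure $q$-expansion computation. For $p\nmid\lcm(M,f^2)$, trivial character, the $n$-th coefficient of $T_p(F\otimes\psi)$ equals $a_{pn}(F\otimes\psi)+p^{k-1}a_{n/p}(F\otimes\psi)\mathbf{1}_{p\mid n}$, i.e.
\begin{equation*}
\psi(pn)\,a_{pn}(F)+p^{k-1}\psi(n/p)\,a_{n/p}(F)\mathbf{1}_{p\mid n}.
\end{equation*}
Since $\psi$ is quadratic and $p\nmid f$, $\psi(p)^{-1}=\psi(p)$, so $\psi(pn)=\psi(p)\psi(n)$ and (when $p\mid n$) $\psi(n/p)=\psi(n)\psi(p)$. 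Factoring out $\psi(p)\psi(n)$ and invoking $T_pF=a_p(F)F$ gives $\psi(p)a_p(F)\cdot a_n(F\otimes\psi)$, proving the eigenvalue assertion. Normalisation ($a_1(F\otimes\psi)=\psi(1)a_1(F)=1$) is immediate.

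\medskip

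\noindent\textbf{Main obstacle.} The only subtle point is Step~2: checking that the translates $F|_k\begin{pmatrix}1&a/f\\0&1\end{pmatrix}$ are permuted (up to $\overline\psi$-weights) by the action of $\Gamma_0(\lcm(M,f^2))$. The delicate bookkeeping — conjugating $\gamma$ across the translation matrix and verifying that the resulting matrix still lies in $\Gamma_0(M)$ — is precisely what forces the level to contain $f^2$ rather than merely $f$; everywhere else the quadratic hypothesis on $\psi$ just neutralises nuisance factors $\psi(\delta)^2$ and $\psi(p)^2$.
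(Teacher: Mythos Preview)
The paper does not give its own proof of this lemma, merely citing it as a special case of Shimura's Proposition~3.64; your plan is precisely to reproduce that argument in the quadratic-twist setting, and each of your three steps is correct (the Gauss-sum expression, the conjugation $\begin{pmatrix}1&a/f\\0&1\end{pmatrix}\gamma=\gamma_a\begin{pmatrix}1&b/f\\0&1\end{pmatrix}$ with $b\equiv\delta^2 a\pmod f$ and $\gamma_a\in\Gamma_0(M)$, and the $q$-expansion computation). The only point you treat lightly is cuspidality --- it is indeed automatic, but the reason is that each $F|_k\begin{pmatrix}1&a/f\\0&1\end{pmatrix}$ is a cusp form on the conjugate congruence subgroup, hence their linear combination is cuspidal on the common intersection and therefore on $\Gamma_0(\lcm(M,f^2))$ once invariance is established.
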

We take the following definition for CM forms (\cite{Rib77}).
\begin{defi}[CM forms]\label{defi:cm_forms} 
Assume that $\psi$ is not the trivial character. The form $F$ has complex multiplication (or, $F$ is a CM form) by~$\psi$ if $a_p(F)=a_p(F)\psi(p)$ for all~$p$ in a set of primes of density~$1$.
\end{defi}

    \subsection{Statement of the result}

Recall that
\begin{equation*}
\projrhobar: \GQ\stackrel{\rhobar_{f,\lambda}}{\longrightarrow}\GL(2,\Flambda)\longrightarrow \PGL(2,\Flambda),
\end{equation*}
where $\GQ=\Gal(\Qbar/\Q)$ and put $\projim=\projrhobar(\GQ)$.

The following result is a generalization to arbitrary weights and fields of coefficients of a theorem on the surjectivity of Galois representations attached to elliptic curves over~$\Q$ independently proved by Kraus (\cite{Kra95}) and Cojocaru (\cite{Coj05}). In particular, it implies that in the case of dihedral projective image, $\ell$ is explicitly bounded in terms of~$k$ and~$N$.

\begin{thm}\label{thm:dihedral_case}
Assume that Assume $\projim$ dihedral. If $f$ does not have complex multiplication, then we have
\begin{equation*}
\ell\le \left(2\left(4.8kN^2(1+\log\log N)\right)^{\frac{k-1}{2}}\right)^{[K:\Q]}.
\end{equation*}
Besides, if $N$ is square-free, then either $\ell\mid N$, or $\ell\le k$, or $\ell=2k-1$.
\end{thm}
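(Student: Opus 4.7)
\emph{Part~1 (general upper bound).} The strategy is to translate the dihedral hypothesis into a congruence between two distinct Hecke eigenforms, then apply a Sturm--Deligne estimate.

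By Dickson's classification, $\projim$ being dihedral is equivalent to the existence of a non-trivial quadratic character $\psi:\GQ\to\{\pm 1\}$ (the character cutting out the cyclic index-two subgroup) such that, as semi-simple representations,
\[
\rhobar_{f,\lambda}\otimes\psi\simeq\rhobar_{f,\lambda}.
\]
Matching Artin conductor exponents prime-by-prime on both sides of this equivalence shows that the conductor $f_\psi$ of $\psi$ divides $N$. By Lemma~\ref{lemma:twist}, $f\otimes\psi$ is then a normalized Hecke eigenform of weight~$k$ and level dividing $M:=\lcm(N,f_\psi^2)\mid N^2$, and Brauer--Nesbitt gives
\[
a_p(f)\equiv a_p(f\otimes\psi)=\psi(p)\,a_p(f)\pmod{\lambda}
\]
for every prime $p\nmid N\ell$. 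Since $f$ is non-CM, it is in particular not CM by $\psi$ (Def.~\ref{defi:cm_forms}), so $f\neq f\otimes\psi$. Sturm's bound for normalized Hecke eigenforms (in the refinement that produces a distinguishing \emph{prime} index) then yields a prime $p\leq B$, with
\[
B:=\frac{k}{12}\,M\prod_{q\mid M}(1+q^{-1}),
\]
at which $a_p(f)\neq\psi(p)\,a_p(f)$; hence $\psi(p)=-1$ and $a_p(f)\neq0$. The congruence, together with $\ell$ odd, gives $\lambda\mid a_p(f)$. Deligne's bound $|a_p(f)^\sigma|\leq 2p^{(k-1)/2}$ over embeddings $\sigma$ of~$K$ then forces
\[
\ell\leq|N_{K/\Q}(a_p(f))|\leq\bigl(2p^{(k-1)/2}\bigr)^{[K:\Q]}.
\]
Combined with $M\mid N^2$ and an explicit Mertens-type inequality calibrated to give $B\leq 4.8\,kN^2(1+\log\log N)$, this yields the announced bound.

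\emph{Part~2 (square-free level).} Assume $N$ square-free, $\ell\nmid N$ and $\ell>k$; I claim $\ell=2k-1$. At each $p\mid N$ (so $v_p(N)=1$), Section~\ref{ss:Steinberg} gives $\rhobarss_p$ unramified, so matching conductor exponents in the twist equivalence forces $\psi$ unramified at~$p$. The analogous argument outside $N\ell$ then shows that $\psi$ is unramified outside~$\ell$. Since $\psi$ is quadratic and non-trivial and $\Q$ admits no everywhere-unramified non-trivial extension, $\psi$ must be ramified at~$\ell$: $f_\psi=\ell$ and $\psi|_{I_\ell}=\cyclomod^{(\ell-1)/2}|_{I_\ell}$ is the unique non-trivial quadratic character of $I_\ell$. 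In the non-ordinary case of Section~\ref{ss:local_description_at_ell}, $\rhobar_{|G_\ell}$ is irreducible and comes from induction from the \emph{unramified} quadratic extension $\Q_{\ell^2}$; for the global induction $\rhobar\simeq\operatorname{Ind}_F^{\Q}\chi$ to have this local shape, $\ell$ must be inert in~$F$, so $\psi$ would be unramified at~$\ell$, contradicting $f_\psi=\ell$. Only the ordinary case remains. Restricting the twist equivalence to $I_\ell$ gives
\[
\{\cyclomod^{k-1},\trivial\}=\{\cyclomod^{k-1+(\ell-1)/2},\cyclomod^{(\ell-1)/2}\}
\]
as multisets on $I_\ell$; since $\cyclomod^{(\ell-1)/2}$ is non-trivial, matching forces $k-1\equiv(\ell-1)/2\pmod{\ell-1}$, whose only solution compatible with $\ell>k$---in view of Lemma~\ref{lemma:cardinality_image}---is $\ell=2k-1$.

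\emph{Main obstacle.} The delicate step is the conductor bookkeeping: deriving $f_\psi\mid N$ in Part~1 and its sharpening $f_\psi=\ell$ in Part~2, which relies on compatibility of the twist equivalence with the Carayol--Livn\'e analysis of~\S\ref{ss:degeneration_cases}. The promotion of Sturm's bound to prime indices for normalized eigenforms is also subtle but standard.
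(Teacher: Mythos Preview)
Your overall strategy---extract the quadratic character, twist, apply a Sturm-type bound, finish with Deligne---matches the paper's, but there are two genuine gaps.

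\textbf{Ramification of $\psi$ at $\ell$ (Part~1).} Your claim that ``matching Artin conductor exponents'' gives $f_\psi\mid N$ is not correct: the Artin conductor of a mod-$\ell$ representation is by convention prime to~$\ell$, so the isomorphism $\rhobar_{f,\lambda}\otimes\psi\simeq\rhobar_{f,\lambda}$ places no constraint whatsoever on the $\ell$-part of~$f_\psi$. If $\psi$ is ramified at~$\ell$, then $M=\lcm(N,f_\psi^2)$ is divisible by~$\ell^2$ and your Sturm bound depends on~$\ell$, making the argument circular. The paper closes this gap by a separate local analysis at~$\ell$: using the description of $\rhobar_{|I_\ell}$ from~\S\ref{ss:local_description_at_ell} together with Lemma~\ref{lemma:cardinality_image}, it shows that for $\ell>k$ with $\ell\notin\{2k-3,2k-1\}$ the projective image of inertia at~$\ell$ is cyclic of order strictly greater than~$2$, hence lands inside the cyclic index-two subgroup of~$\projim$, so $\psi$ is unramified at~$\ell$. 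The two excluded values are then absorbed into the final bound. The paper also performs an auxiliary twist of both $f$ and $f\otimes\psi$ by a Kronecker character ramified exactly at the primes dividing~$2N$; this forces the distinguishing index coming out of Murty's bound to be coprime to~$2N$, which is needed because the congruence $a_p\equiv\psi(p)a_p\pmod\lambda$ is only available for $p\nmid N\ell$. Your direct Sturm approach does not control the case where the distinguishing prime divides~$N$.

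\textbf{The non-ordinary case in Part~2.} The assertion that ``for the global induction $\rhobar\simeq\mathrm{Ind}_F^{\Q}\chi$ to have this local shape, $\ell$ must be inert in~$F$'' is false. The supersingular local representation $\rhobar_{|G_\ell}$ is indeed induced from the unramified quadratic extension of~$\Q_\ell$, but it can \emph{simultaneously} be induced from a ramified quadratic extension whenever the projective image of~$I_\ell$ has order~$2$; by Lemma~\ref{lemma:cardinality_image} this happens exactly when $(\ell+1)\mid 2(k-1)$, i.e.\ (for $\ell>k$) when $\ell=2k-3$. So your argument does not exclude $\ell=2k-3$. The paper's inertial-order argument yields $\ell\in\{2k-1,2k-3\}$, and then invokes \cite[Lem.~3.2]{Die12} to eliminate $2k-3$; this last step is not elementary and cannot be replaced by the local-global compatibility you sketch.
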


\noindent\textit{Remarks.}
\begin{enumerate}
\item The integer $[K:\Q]$ is bounded from above by the dimension~$g_0^{\sharp}(k,N)$ of the new subspace of~$\ModSpace$. A closed formula in terms of $k$ and $N$ for $g_0^{\sharp}(k,N)$ as well as asymptotic estimates can be found in~\cite{Mar05}.
\item When $N=1$, the result goes back to Ribet (see the proof of~(ii) p.~264 and the remark after Cor.~4.5 of~\cite{Rib75}). Moreover, our argument for the case of arbitrary square-free level is a combination of tricks from~\cite{Rib85} and~\cite{Rib97}.
\item A newform of square-free level and trivial Nebentypus is automatically non-CM (see e.g.~\cite[\S4]{Tsa12}).
\end{enumerate}

  \subsection{Proof of Theorem~\ref{thm:dihedral_case}}
Assume $\ell\nmid N$ and $\projim$ dihedral. Then $\projim$ is an extension of $\{\pm1\}$ by a cyclic group~$C$ and every element of~$\Glambdabar$ which does not map to~$C$ has trace~$0$. Hence, we may consider the following quadratic character~:
\begin{equation*}
\elambda:\GQ\stackrel{\projrhobar}{\longrightarrow}\projim\rightarrow\{\pm1\}.
\end{equation*}
Let $L_{\lambda}$ be the number field cut out by $\projrhobar$ and $K_{\lambda}/\Q$ its quadratic sub-extension fixed by the kernel of~$\elambda$. The extension $L_{\lambda}/\Q$ has Galois group isomorphic to~$\projim$ while $C\simeq\Gal(L_{\lambda}/K_{\lambda})$.
Clearly, $\elambda$ is unramified outside~$\ell N$. The following proposition describes more precisely the ramification set of~$\elambda$.
\begin{prop}
Assume $\ell\nmid N$.
\begin{enumerate}
\item Let $p\not=\ell$ be a ramified prime for~$\elambda$. Then $p^2\mid N$.
\item Assume $\ell>k$ and
\begin{enumerate}
\item either $f$ is ordinary at~$\lambda$ and $\ell\not=2k-1$;
\item or $f$ is not ordinary at~$\lambda$ and $\ell\not=2k-3$.
\end{enumerate}
Then, $\elambda$ is unramified at~$\ell$.
\end{enumerate}
\end{prop}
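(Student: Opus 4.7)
The approach is to realize $\rhobar_{f,\lambda}$ as an induced representation and to combine this with the local descriptions of~\S\ref{ss:Steinberg} and~\S\ref{ss:local_description_at_ell} to constrain the ramification of~$\elambda$. Since $\projim$ is dihedral with index-$2$ cyclic subgroup $C\simeq\Gal(L_\lambda/K_\lambda)$, setting $H:=\ker\elambda=\Gal(\Qbar/K_\lambda)$, there exists a character $\chi$ of~$H$ such that $\rhobar_{f,\lambda}\simeq\mathrm{Ind}_H^{\GQ}\chi$. The key consequence is the trace-vanishing identity $\tr(\rhobar_{f,\lambda}(\sigma))=0$ for every $\sigma\in\GQ$ with $\elambda(\sigma)=-1$.

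For part~(1), let $p\neq\ell$ be a prime with $v_p(N)\leq 1$. If $p\nmid N$ then $\rhobar_{f,\lambda}|_{I_p}$ is trivial; if $p\| N$, then~\S\ref{ss:Steinberg} together with the fact that both~$\mu$ and~$\cyclomod$ are unramified at~$p$ (since $p\neq\ell$) yields that $\rhobar_{f,\lambda}|_{I_p}$ is conjugate to a unipotent representation $\begin{pmatrix}1&\star\\0&1\end{pmatrix}$. In either case every $\sigma\in I_p$ satisfies $\tr(\rhobar_{f,\lambda}(\sigma))=2\neq 0$ in~$\Flambda$ (working with $\ell$ odd), so the trace-vanishing property gives $I_p\subset H$, i.e.\ $\elambda$ is unramified at~$p$.

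For part~(2), assume for contradiction that $\elambda$ ramifies at~$\ell$. Then $J:=I_\ell\cap H$ has index~$2$ in~$I_\ell$, and Mackey's formula applied to $\rhobar_{f,\lambda}=\mathrm{Ind}_H^{\GQ}\chi$ yields $\rhobar_{f,\lambda}|_{I_\ell}\simeq\mathrm{Ind}_J^{I_\ell}(\chi|_J)$. By~\S\ref{ss:local_description_at_ell}, $\rhobar_{f,\lambda}|_{I_\ell}$ is reducible (upper-triangular in the ordinary case~(a), diagonal in the non-ordinary case~(b)), so this induced representation is reducible; the standard theory of induction from an index-$2$ subgroup then produces a splitting $\rhobar_{f,\lambda}|_{I_\ell}\simeq\tilde\chi\oplus\tilde\chi\cdot\elambda|_{I_\ell}$ for some extension $\tilde\chi$ of~$\chi|_J$ to~$I_\ell$. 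Comparing with the explicit form in~\S\ref{ss:local_description_at_ell} identifies $\elambda|_{I_\ell}$, up to inversion, with $\cyclomod^{k-1}$ in case~(a) and with $\psi^{(\ell-1)(k-1)}$ in case~(b). Since $\elambda$ takes values in~$\{\pm 1\}$, Lemma~\ref{lemma:cardinality_image} forces $\ell=2k-1$ in case~(a) or $\ell=2k-3$ in case~(b), each contradicting the hypotheses.

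The main technical point is part~(2): one must deduce, from the reducibility of $\rhobar_{f,\lambda}|_{I_\ell}$ established in~\S\ref{ss:local_description_at_ell}, that the Mackey-induced representation is actually semisimple and splits as a sum of two characters whose ratio is~$\elambda|_{I_\ell}$. Once this identification is in place, the conclusion follows from the order computations of Lemma~\ref{lemma:cardinality_image}.
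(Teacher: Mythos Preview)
Your argument is correct but takes a different route from the paper's. The paper never invokes the induced-representation structure or Mackey's formula; instead it exploits a single group-theoretic fact: since $\projim$ is dihedral, the image $\Glambdabar$ has order prime to~$\ell$. For part~(1), this immediately forces the unipotent action of~$I_p$ (from~\S\ref{ss:Steinberg}) to be trivial, so $\rhobar$ itself---and hence $\elambda$---is unramified at~$p$. For part~(2), the same fact kills the~$\star$ in the ordinary case, and then one simply observes that $\projrhobar(I_\ell)$ is cyclic of order $(\ell-1)/\gcd(\ell-1,k-1)$ (ordinary) or $(\ell+1)/\gcd(\ell+1,k-1)$ (non-ordinary); by Lemma~\ref{lemma:cardinality_image} this order exceeds~$2$ under the stated hypotheses, so $\projrhobar(I_\ell)$ cannot surject onto~$\projim/C$ and must lie inside~$C$, i.e.\ $\elambda(I_\ell)=1$.

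Your approach trades this order argument for representation-theoretic machinery: the trace-vanishing criterion handles part~(1) cleanly (though it needs $\ell$ odd, which the paper's argument does not), and Mackey's formula in part~(2) forces semisimplicity of $\rhobar|_{I_\ell}$ and pins down $\elambda|_{I_\ell}$ as the ratio of the two inertial characters. Both routes land on the same application of Lemma~\ref{lemma:cardinality_image}. The paper's proof is shorter and more elementary; yours is more structural and makes the role of the quadratic extension~$K_\lambda$ explicit via the induction, which can be useful if one wants to push the analysis further (e.g.\ to identify~$K_\lambda$ exactly, as the paper does immediately afterwards in the square-free case).
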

\begin{proof}
Let $p$ be a prime dividing~$N$ exactly once. By~\S\ref{ss:Steinberg}, we know that the inertia subgroup~$I_p$ at~$p$ acts unipotently in~$\rhobar$. Since $\Glambdabar$ has prime-to-$\ell$ order, it follows that $I_p$ acts trivially. So $\rhobar$ and, hence, $\elambda$ are unramified at~$p$. This proves the first part of the proposition.

Assume now $\ell>k$. Let $I_{\ell}$ be the inertia group of a decomposition subgroup at~$\ell$ and recall that $\ell\nmid N$. We prove that $\elambda$ is unramified at~$\ell$ under conditions~(a) and~(b) in turn.
\begin{enumerate}
\item[(a)] Assume that $f$ is ordinary at~$\lambda$ and $\ell\not=2k-1$. By~\S\ref{ss:local_description_at_ell}, we have
\begin{equation*}
\rhobar_{|I_{\ell}}\simeq\begin{pmatrix}
                         \cyclomod^{k-1} & \star \\
			    0 & 1\\
                         \end{pmatrix}.
\end{equation*}
But $\Glambdabar$ has prime-to-$\ell$ order and therefore $\star=0$. In particular, $\projrhobar(I_{\ell})$ is isomorphic to the image of~$\cyclomod^{k-1}$ which is, by Lemma~\ref{lemma:cardinality_image}, cyclic of order~$(\ell-1)/\gcd(\ell-1,k-1)>2$. Therefore, it has to be included in~$C$ and hence $\elambda$ is unramified at~$\ell$.
\item[(b)] Assume that $f$ is not ordinary at~$\lambda$ and $\ell\not=2k-3$. By~\S\ref{ss:local_description_at_ell}, $\projrhobar(I_{\ell})$ is isomorphic to the image of~$I_{\ell}$ under~$\psi^{(\ell-1)(k-1)}$ where $\psi$ is a fundamental character of level~$2$. By the assumption $\ell\not=2k-3$ and Lemma~\ref{lemma:cardinality_image}, it is therefore cyclic of order~$(\ell+1)/\gcd(\ell+1,k-1)>2$. We conclude as before.
\end{enumerate}
\end{proof}
Assume $N$ to be square-free and $\ell>k$. Then, by the above proposition, $K_{\lambda}$ is the unique quadratic extension of~$\Q$ ramified at~$\ell$ only and $\ell\in\{2k-1,2k-3\}$. The case $\ell=2k-3$ however does not occur. This is proved in~\cite[Lem.~3.2]{Die12}. Hence Thm.~\ref{thm:dihedral_case} in the square-free level case.

Assume now that $N$ is any integer not divisible by~$\ell$, and that $\ell>k$ satisfies $\ell\not=2k-1$ and $\ell\not=2k-3$. We may identify~$\elambda$ with a Dirichlet character. Let us denote by $\condeps$ its conductor. It is co-prime to~$\ell$ by the above proposition. We then have $\condeps=|D_{K_{\lambda}}|$ where $D_{K_{\lambda}}$ is the fundamental discriminant of the quadratic field~$K_{\lambda}$ fixed by the kernel of~$\elambda$ (\cite[VII.~\S11]{Neu99}). In particular, if $K_{\lambda}=\Q(\sqrt{D_0})$ with $D_0$ square-free, then $\condeps=D_0$ or $4D_0$ depending on whether $D_0\equiv 1\pmod{4}$ or not. If moreover, $\ell>2k-1$, then by the proposition above, $\condeps^2\mid 2^4 N$. Put $g=f\otimes\elambda$. By the Lemma~\ref{lemma:twist}, $g\in\ModSpace[S][k][0][2^4N]$ and for any prime $p\nmid2N$, $g$ is an eigenform for the $T_p$ Hecke operator with corresponding eigenvalue~$a_p(g)=a_p\elambda(p)$. Let $D_0'=\varepsilon\prod_{3\leq p\mid N}p$ be the product of all odd primes dividing~$N$ with a sign $\varepsilon\in\{\pm1\}$ chosen so that $D_0'\equiv 3\pmod{4}$. Then $4D_0'$ is a fundamental discriminant and the Kronecker symbol $\psi=\left(4D_0'/\cdot\right)$ is a primitive quadratic Dirichlet character of modulus $4D_0'$ (\cite[Th.~2.2.15]{Coh07}) precisely ramified at the primes dividing~$2N$. Put~:
\begin{equation*}
\ftilde=f\otimes\psi\quad\textrm{and}\quad \gtilde=g\otimes\psi.
\end{equation*}
Since $(4D_0')^2\mid 2^4N^2$, it follows from Lemma~\ref{lemma:twist} that $\ftilde,\gtilde\in\ModSpace[S][k][0][2^4N^2]$ and for any integer~$n$, we have~:
\begin{equation}\label{eq:Fourier_coeff_ftilde_gtilde}
\left\{\begin{array}{rcl}
       a_n(\ftilde) & = & a_n\psi(n) \\
       a_n(\gtilde) & = & a_n\elambda(n)\psi(n).
       \end{array}
\right.
\end{equation}
Since $f$ is assumed to be non-CM (in the sense of Def.~\ref{defi:cm_forms}), we have $\ftilde\not=\gtilde$ and by~\cite[Th.~1]{Mur97}, there exists an integer
\begin{equation}\label{eq:bound_for_n}
n\leq \frac{4k}{3}N^2\prod_{p\mid 2N}\left(1+\frac{1}{p}\right)\le 2kN^2\prod_{p\mid N}\left(1+\frac{1}{p}\right)
\end{equation}
such that $a_n(\ftilde)\not=a_n(\gtilde)$. According to~(\ref{eq:Fourier_coeff_ftilde_gtilde}), it follows that  we have~:
\begin{equation*}
\psi(n)\not=0,\quad a_n\not=0\quad\textrm{and}\quad \elambda(n)=-1.
\end{equation*}
From the condition~$\elambda(n)=-1$, we deduce that there exists a prime divisor~$q$ of~$n$ together with an odd integer~$t$ such that $q^t\mid n$ but $q^{t+1}\nmid n$ and $\elambda(q)=-1$. If $q=\ell$, we are done in bounding $\ell$ in terms of~$k$ and~$N$. Assume therefore $q\not=\ell$. The multiplicativity of the Fourier coefficients of~$f$ gives that $a_{q^t}\mid a_n$ and hence (since $t$ is odd) that $a_q\not=0$. Besides, since $\elambda(q)=-1$, the image under~$\rhobar_{f,\lambda}$ of a Frobenius at~$q$ has trace~$0$ modulo~$\lambda$. In other words, $\ell$ divides the norm of the non-zero algebraic integer~$a_q$. Applying Deligne's estimate on the Fourier coefficients of~$f$ and its Galois conjugates by $\Qbar$-automorphisms, we get that~:
\begin{equation}\label{eq:bound_Deligne}
\ell \leq \mathrm{N}_{K/\Q}\left(a_q\right)=\prod_{\sigma: K\hookrightarrow \C} \left|\sigma(a_q)\right|\leq \left(2q^{(k-1)/2}\right)^{[K:\Q]}.
\end{equation}
Besides, using \cite[(3.27)]{RoSc62} and inequality~(\ref{eq:bound_for_n}), we get the following estimate for~$q$~:
\begin{equation}\label{eq:bound_Rosser}
q\le 4.8kN^2(1+\log\log N).
\end{equation}
The theorem follows from~(\ref{eq:bound_Deligne}) and~(\ref{eq:bound_Rosser}).

    \section{Projective image isomorphic to $A_4$, $S_4$ or $A_5$}

The following result is proved in a different way in~\cite{Rib85}.
\begin{thm}\label{thm:image_isom_A_4}
If $\projim$ is isomorphic to $A_4$, $S_4$ or $A_5$, then either $\ell\mid N$ or $\ell\le 4k-3$. 
\end{thm}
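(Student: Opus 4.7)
The plan is to argue by contradiction: assume that $\projim$ is isomorphic to one of $A_4$, $S_4$ or $A_5$, that $\ell\nmid N$, and that $\ell>4k-3$; then I will exhibit an element of order strictly greater than $5$ in $\projim$, which is impossible since the element orders in $A_4$, $S_4$, $A_5$ are bounded by $3$, $4$, $5$ respectively.

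First I would check that the hypotheses of the local description at~$\ell$ given in~\S\ref{ss:local_description_at_ell} are in force. Since $k\ge 2$ and $\ell>4k-3\ge k$, we have $2\le k\le \ell+1$ and $\ell\nmid N$, so the description of $\rhobar_{|I_{\ell}}$ applies. I would then split into the two standard cases.

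In the ordinary case, $\rhobar_{|I_{\ell}}$ is upper triangular with diagonal characters $\cyclomod^{k-1}$ and $\trivial$. Modulo scalars this gives a cyclic subgroup of $\projim$ whose order is the order of $\cyclomod^{k-1}$, namely $n=(\ell-1)/\gcd(\ell-1,k-1)$. In the non-ordinary case, $\rhobar_{|I_{\ell}}$ is diagonal with fundamental characters $\psi^{k-1}$ and $\psi^{\ell(k-1)}$ of level~$2$; the projective image is the cyclic group generated by the ratio $\psi^{(\ell-1)(k-1)}$, of order $m=(\ell+1)/\gcd(\ell+1,k-1)$. By Lemma~\ref{lemma:cardinality_image}, the assumption $\ell>4k-3$ forces $n>5$ in the first case, and since $\ell>4k-3\ge 4k-5$ it also forces $m>5$ in the second case.

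In either case $\projim$ contains a cyclic subgroup of order strictly greater than~$5$, contradicting the fact that the maximal order of an element in $A_4$, $S_4$ or $A_5$ is~$5$. Hence the assumption $\ell>4k-3$ together with $\ell\nmid N$ is untenable, proving the theorem. The main conceptual step is the identification of the projective image of~$I_{\ell}$, which was already done in~\S\ref{ss:local_description_at_ell} and Lemma~\ref{lemma:cardinality_image}; I do not foresee any serious obstacle, the only point to double-check being that the parity of~$k$ ensures the borderline cases $n=5$ or $m=5$ cannot actually occur at a prime~$\ell>4k-3$ (which is already built into Lemma~\ref{lemma:cardinality_image}).
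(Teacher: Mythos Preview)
Your proposal is correct and follows essentially the same route as the paper's own proof: assume $\ell\nmid N$ and $\ell>4k-3$, invoke the local description at~$\ell$ from~\S\ref{ss:local_description_at_ell} in the ordinary and non-ordinary cases, and use Lemma~\ref{lemma:cardinality_image} to produce a cyclic subgroup of $\projim$ of order~$>5$, contradicting the element-order bounds in $A_4$, $S_4$, $A_5$. Your write-up even spells out a couple of verifications (e.g.\ $\ell>4k-3\ge k$ so that \S\ref{ss:local_description_at_ell} applies) that the paper leaves implicit; the closing remark about parity of~$k$ is unnecessary, since Lemma~\ref{lemma:cardinality_image} already gives $n,m>5$ without any parity hypothesis.
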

\begin{proof}
Assume that $\ell\nmid N$ and $\ell>k$. Then, by~\S\ref{ss:local_description_at_ell}, $\projim$ has a cyclic subgroup given the image of inertia at~$\ell$. In the case of ordinariness, this cyclic subgroup is isomorphic to the image of~$\cyclomod^{k-1}$ which has order~$>5$ if $\ell>4k-3$ by Lemma~\ref{lemma:cardinality_image}. If else $f$ is not ordinary at~$\lambda$, then it has order $(\ell+1)/\gcd(\ell+1,k-1)$ which is also~$>5$ if $\ell>4k-3$. 

In any case, if $\ell>4k-3$, then $\projim$ has an element of order~$>5$. This rules out the possibility for $\projim$ to be isomorphic to $A_4$, $S_4$ or $A_5$.
\end{proof}

    \section{Numerical examples}

In this section we give some examples illustrating the theorems of the paperµ. All the computations were performed on SAGE~(\cite{sage}).

    \subsection{Reducible representations}

Before dealing with examples, let us first recall that for the representations $\rhobar_{f,\lambda}$, irreducibility is equivalent to absolute irreducibility.

    \subsubsection{Square level case}
Fix $(k,N)=(6,81)$. The new subspace in~$\ModSpace[S][6][0][81]$ is~$18$-dimensional and splits into~$5$ Galois conjugacy classes labeled 81.6a,\dots,81.6e in~SAGE (\cite{sage}). 
According to Theorem~\ref{thm:general_thm}, the prime ideals~$\lambda$ such that $\rhobar_{f,\lambda}$ is reducible for some newform $f\in\ModSpace[S][6][0][81]$ have residue characteristic~$\ell$ in~$\{2,3,5,7,43,1171\}$. Let us first show that $2$, $3$, $7$, $43$ and $1171$ are indeed the residue characteristics of some prime ideals~$\lambda$ for which $\rhobar_{f,\lambda}$ is reducible for the specific (up to Galois conjugacy) modular form~$f$ labeled 81.6c. We have~:
\begin{equation*}
f(\tau)=q + \alpha q^{2} + (\alpha ^{2} - 32)q^{4} + \left(-\frac{1}{4}\alpha ^{3} - \frac{9}{4}\alpha ^{2} + \frac{25}{2}\alpha  + 54\right)q^{5} + O(q^{5}),
\end{equation*}
where $\alpha$ is a root of $X^4 + 3X^3 - 84X^2 - 72X+ 792$.

Let us denote by $K$ the number field generated by~$\alpha$. We call $\nu$ the primitive Dirichlet character modulo~$9$ sending~$2$ on~$\zeta_3$, where $\zeta_3$ is a primitive third root of unity and $L=\Q(\zeta_3)$. Since $\nu$ has order~$3$, we have $\epsilon=\nu$ with the notation of Thm.~\ref{thm:general_thm}. Moreover we have $B_{6,\nu}/12=(751\zeta_3+1172)/3$ which has norm~$3^{-1}\cdot 7\cdot 43\cdot 1171$.

Then we more precisely show that for each $\ell\in\{2,3,7,43,1171\}$ there are prime ideals $\lambda_{\ell}$ and $\mathfrak{p}_{\ell}$ above~$\ell$ in $\mathcal{O}$ and~$\Z[\zeta_3]$ respectively such that $\rhobarss_{f,\lambda_{\ell}}\simeq\rhobar_{E,\mathfrak{p}_{\ell}}$ where $E$ is the following Eisenstein series
\begin{equation*}
E(\tau)=\sum_{n\ge1}\sigma_5^{\nu}(n)q^n=q-(31\zeta_3+32)q^2+(1023\zeta_3+31)q^4+(3124\zeta_3-1)q^5+O(q^5).
\end{equation*}
Such an isomorphism is proved to hold by checking that for all integers $n$ up to the Sturm bound (which, here, equals~$54$) we have a congruence
\begin{equation*}
a_n\equiv a_n(E)\pmod{\mathcal{L}_{\ell}},
\end{equation*}
for some prime ideal~$\mathcal{L}_{\ell}$ above~$\ell$ in the integer ring of the compositum~$KL$. For instance, if $\ell=43$, we can take
\begin{equation*}
\mathcal{L}_{43}=\left(43, \alpha + \zeta_3 - 6 \right).
\end{equation*}
Therefore we have $\rhobarss_{f,\lambda_{\ell}}\simeq\nubar_{\ell}\oplus\nubar_{\ell}^{-1}\cyclomod^5$ where
\begin{equation*}
\nubar_{\ell}~:\GQ\twoheadrightarrow(\Z/9\Z)^{\times}\stackrel{\nu}{\rightarrow}\Z[\zeta_3]\twoheadrightarrow\Z[\zeta_3]/\mathfrak{p}_{\ell}
\end{equation*}
is $\nu$ modulo~$\mathfrak{p}_{\ell}$ viewed as a character of~$\GQ$. For each $\ell$ as above the corresponding ideals~$\lambda_{\ell}$ and $\mathfrak{p}_{\ell}$ are listed in Table~\ref{table:congruence_primes} (as given in SAGE).

\begin{table}[h!]
\centering
\begin{tabular}{c|c|c}
$\ell$ & $\lambda_{\ell}$ & $\mathfrak{p}_{\ell}$ \\
\hline
$2$ & $\left(2, \alpha^3/36 + \alpha^2/4 - 7\alpha/6 - 7\right)$ & $(2)$ \\
$3$ & $\left(3, -\alpha^3/36 + \alpha^2/12 + 7\alpha/6 - 7\right)$ & $(2\zeta_3 +
1)$ \\
$7$ & $\left(7, \alpha^3/36 + \alpha^2/12 - 5\alpha/3 + 2\right)$ & $(3\zeta_3 + 1)$ \\
$43$ & $\left(43,\alpha^3/36 + \alpha^2/12 - 5\alpha/3 - 20\right)$& $(7\zeta_3+6)$ \\
$1171$ & $\left(1171, \alpha^3/36 + \alpha^2/12 - 5\alpha/3 - 586\right)$ & $(39\zeta_3 + 25)$ \\
\end{tabular}
\caption{Congruence primes for~$f$ and~$E$}
\label{table:congruence_primes}
\end{table}
Let us now see what happens for the remaining prime, namely~$\ell=5$. For the specific newform above with coefficients field~$K$, we have $5\mathcal{O}=\lambda_5\lambda_5'$ where $\lambda_5=(5,\alpha+4)$ and $\lambda_5'=(5,\alpha^3+4\alpha^2+3)$. Then $\lambda_5$ and $\lambda_5'$ have inertia degree $1$ and $3$ respectively. Besides, if $\Frob_2$ denotes a Frobenius at~$2$, the characteristic polynomial of $\rhobar_{f,\lambda_5}(\Frob_2)$ and $\rhobar_{f,\lambda_5'}(\Frob_2)$ is $X^2-\alpha X+2^5$. Such a polynomial being irreducible modulo~$\lambda_5$ and $\lambda_5'$ as one checks, we get that $\rhobar_{f,\lambda_5}$ and~$\rhobar_{f,\lambda_5'}$ are both irreducible.

For each pair $(f,\lambda)$ where $f$ is a newform in $\ModSpace[S][6][0][81]$ and $\lambda$ is a prime ideal in~$\mathcal{O}$ above~$5$ we give in Table~\ref{table:primes_p} the smallest prime number~$p\not=3,5$ and~$\le 100$ for which the characteristic polynomial of~$\rhobar_{f,\lambda}(\Frob_p)$ is irreducible. 
\begin{table}[h!]
\centering
\begin{tabular}{c|c|c|c}
$f$ & $K=\Q(\alpha)$ & $\lambda$ &  $p$ \\
\hline
\multirow{2}{*}{81.6a} & \multirow{2}{*}{$\alpha^2 + 3\alpha - 30=0$} & $(-6\alpha+25)$ & $2$ \\
& &  $(-6\alpha-43)$ & $7$ \\
\hline
\multirow{2}{*}{81.6b} & \multirow{2}{*}{$\alpha^2 - 3\alpha - 30=0$} & $(-6\alpha-25)$ & $2$ \\
& &  $(-6\alpha+43)$ & $7$ \\
\hline
\multirow{2}{*}{81.6c} & \multirow{2}{*}{$\alpha^4 + 3\alpha^3 - 84\alpha^2 - 72\alpha+ 792=0$} & $(5,\alpha+4)$ & $2$ \\
& &  $(5,\alpha^3+4\alpha^2+3)$ & $2$ \\
\hline
\multirow{2}{*}{81.6d} & \multirow{2}{*}{$\alpha^4 - 3\alpha^3 - 84\alpha^2 + 72\alpha+ 792=0$} & $(5,\alpha+1)$ & $2$ \\
& &  $(5,\alpha^3+\alpha^2+2)$ & $2$ \\
\hline
\multirow{3}{*}{81.6e} & \multirow{3}{*}{$\alpha^6 - 171\alpha^4 + 7128\alpha^2 -432=0$} & $(5,\alpha^2+1)$ & $\emptyset$ \\
& &  $(5,\alpha^2+3\alpha+3)$ & $7$ \\
& &  $(5,\alpha^2+2\alpha+3)$ & $7$ \\
\end{tabular}
\caption{Smallest prime $p\not=3,5$ and $\le 100$ such that $\rhobar_{f,\lambda}(\Frob_p)$ acts irreducibly}
\label{table:primes_p}
\end{table}
Therefore all the representations $\rhobar_{f,\lambda}$ are irreducible unless perhaps if 
$f$ is the form~81.6e and $\lambda=(5,\alpha^2+1)$. But this latter representation is also proved to be irreducible by noticing that the eigenvalues of $\rhobar_{f,\lambda}(\Frob_2)$ and $\rhobar_{f,\lambda}(\Frob_{19})$ in $\F_{\lambda}$ are $\{3\beta,3\beta\}$ and $\{2\beta+1,3\beta+1\}$ respectively where $\beta$ is the image of~$\alpha$ in~$\F_{\lambda}$ (since if it were reducible, we would have $\rhobarss_{f,\lambda}\simeq\epsilon_1\oplus\epsilon_2$ where both $\epsilon_1$ and $\epsilon_2$ factor through $(\Z/45\Z)^{\times}$). This eventually proves the following proposition.
\begin{prop}
Let $(k,N)=(6,81)$. Then there exists a newform $f\in\ModSpace[S][6][0][81]$ together with a prime ideal~$\lambda$ in~$\mathcal{O}$ such that $\rhobar_{f,\lambda}$ is reducible if and only if $\ell$ belongs to~$\{2,3,7,43,1171\}$.
\end{prop}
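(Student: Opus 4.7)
The plan is in two parts: the ``only if'' direction, establishing $\ell\in\{2,3,7,43,1171\}$, and the ``if'' direction, exhibiting for each such $\ell$ a specific newform and prime ideal with reducible $\rhobar_{f,\lambda}$. For the ``only if'' direction I would apply Theorem~\ref{thm:general_thm} with $(k,N)=(6,81)$. Since $N=9^2$ is a square with $c=9$, we are in case~(3). The bound ``$\ell\mid N$ or $\ell<k-1=5$'' contributes $\ell\in\{2,3\}$. The Steinberg-type condition ``$v_p(N)=2$ and $p\equiv\pm1\pmod\ell$'' is vacuous, since the only prime dividing $81$ is $3$ and $v_3(81)=4$. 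For case~(3)(b), enumerate the primitive Dirichlet characters $\nu$ of conductor dividing $c=9$: besides the trivial character and the unique (odd) order-$2$ character mod~$3$, there are the four primitive characters of order $3$ or $6$ mod~$9$. In each case $\epsilon=(\nu^2)_0^{-1}$ is either trivial or a primitive order-$3$ character mod~$9$. For trivial $\epsilon$ one has $B_6/12=1/504$, of unit numerator, so no prime is produced. For $\epsilon$ of order $3$ mod~$9$, compute $B_{6,\epsilon}/12=(751\zeta_3+1172)/3$, whose norm has numerator $7\cdot 43\cdot 1171$. Finally the Euler-factor condition $\ell\mid p^k-\epsilon^{-1}(p)$ at $p\mid c$ is vacuous because $3\mid c_0=9$ forces $\epsilon(3)=0$. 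This yields an a~priori set $\{2,3,5,7,43,1171\}$, with $\ell=5$ (falling in the gap $k-1\le\ell\le k+1$ not covered by Theorem~\ref{thm:general_thm}) as an unresolved possibility to be excluded by hand.

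For the ``if'' direction I would focus on the newform $f$ labeled 81.6c, with coefficient field $K=\Q(\alpha)$ of degree~$4$. Introduce the Eisenstein series $E(\tau)=\sum_{n\ge 1}\sigma_5^{\nu}(n)q^n$ attached to the order-$3$ character $\nu$ mod~$9$ sending $2$ to $\zeta_3$, and set $L=\Q(\zeta_3)$. For each $\ell\in\{2,3,7,43,1171\}$ I would search for a prime ideal $\mathcal{L}_\ell$ of the ring of integers of $KL$ such that $a_n(f)\equiv a_n(E)\pmod{\mathcal{L}_\ell}$ for every $n$ up to the Sturm bound (equal to~$54$ in weight $6$ and level~$81$). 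Once such a congruence is verified below the Sturm bound, Sturm's theorem upgrades it to all $n$, and Lemma~\ref{lem:strong_cong_f_E} together with the Brauer--Nesbitt principle yields
\[
\rhobarss_{f,\lambda_\ell}\simeq\nubar_\ell\oplus\nubar_\ell^{-1}\cyclomod^5,
\]
where $\lambda_\ell=\mathcal{L}_\ell\cap\mathcal{O}$ and $\nubar_\ell$ is $\nu$ reduced modulo $\mathfrak{p}_\ell=\mathcal{L}_\ell\cap\Z[\zeta_3]$. The explicit ideals are recorded in Table~\ref{table:congruence_primes}.

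To exclude $\ell=5$, I would loop over each pair $(f,\lambda)$ with $f$ a newform in $\ModSpace[S][6][0][81]$ and $\lambda$ a prime of the corresponding $\mathcal{O}$ above~$5$. For each such pair it suffices to exhibit a single prime $p\nmid 15$ for which the characteristic polynomial $X^2-a_pX+p^5$ of $\rhobar_{f,\lambda}(\Frob_p)$ is irreducible modulo~$\lambda$, whence $\rhobar_{f,\lambda}$ is irreducible. Table~\ref{table:primes_p} would collect these witnesses. The main obstacle I anticipate is the pair $(81.6e,\ \lambda=(5,\alpha^2+1))$: no $p\le 100$ yields an irreducible characteristic polynomial for that $\lambda$, so a direct specialization argument fails. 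I would handle this case by observing that a hypothetical decomposition $\rhobarss_{f,\lambda}\simeq\epsilon_1\oplus\epsilon_2$ would force both $\epsilon_i$ to factor through $(\Z/45\Z)^\times$ (the ramification being controlled by~$N$ and~$\ell$), then contradicting this by matching the predicted Frobenius eigenvalues at $2$ and $19$ against the computed pairs $\{3\beta,3\beta\}$ and $\{2\beta+1,3\beta+1\}$ in~$\Flambda$, where $\beta$ is the image of~$\alpha$. All other steps are finite Sage verifications below explicit bounds.
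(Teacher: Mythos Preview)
Your proposal is correct and mirrors the paper's argument in all essential respects. The paper likewise applies Theorem~\ref{thm:general_thm} to obtain the a~priori set $\{2,3,5,7,43,1171\}$, exhibits reducibility at each $\ell\in\{2,3,7,43,1171\}$ for the form 81.6c via Sturm-bounded congruences with the Eisenstein series attached to an order-$3$ character mod~$9$ (Table~\ref{table:congruence_primes}), and rules out $\ell=5$ by the Frobenius irreducibility checks of Table~\ref{table:primes_p} together with the same $(\Z/45\Z)^\times$ argument for the recalcitrant pair $(81.6e,\ (5,\alpha^2+1))$.
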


    \subsubsection{Square-free level case}

Fix $(k,N)=(4,11)$. The new subspace in~$\ModSpace[S][4][0][11]$ is~$2$-dimensional and generated by one Galois orbit labeled 11.4a in~SAGE (\cite{sage}). Let $f$ be a representative of this Galois orbit. We have
\begin{equation*}
f(\tau)=q+\alpha q^2+(-4\alpha+3)q^3+(2\alpha-6)q^4+(8\alpha-7)q^5+O(q^5),
\end{equation*}
where $\alpha$ is a root of~$X^2-2X-2$. The field $K=\Q(\alpha)$ is the coefficients field of~$f$. According to Theorem~\ref{thm:square_free_level_case}, if $\rhobar_{f,\lambda}$ is reducible then $\lambda$ has residue characteristic~$\ell$ in the set~$\{2,3,5,11,61\}$. For each prime~$\ell$ in~$\{2,3,5,11,61\}$ we give in Table~\ref{table:primes_p_bis} the smallest prime $p\not= 11,\ell$ and $p\le 100$ such that the characteristic polynomial of~$\rhobar_{f,\lambda}(\Frob_p)$ is irreducible. 

\begin{table}[h!]
\centering
\begin{tabular}{c|c|c|c|c|c|c|c}
$\ell$ & $2$ & $3$ & $5$ & \multicolumn{2}{c|}{$11$} & \multicolumn{2}{c}{$61$} \\
\hline
$\lambda$  &  $(\alpha)$ & $(\alpha-1)$ & $(5)$ & $(2\alpha-3)$ & $(2\alpha-1)$ & $(\alpha-9)$ & $(\alpha+7)$ \\  
\hline
$p$ & $3$ & $2$ & $2$ & $2$ & $\emptyset$ & $\emptyset$ & $2$\\ 
\end{tabular}
\caption{Smallest prime $p\not=11,\ell$ and $\le 100$ such that $\rhobar_{f,\lambda}(\Frob_p)$ acts irreducibly}
\label{table:primes_p_bis}
\end{table}
Therefore all such Galois representations are irreducible except perhaps $\rhobar_{f,(2\alpha-1)}$ and $\rhobar_{f,(\alpha-9)}$. These latter representations turn out to be reducible and we have
\begin{equation*}
\rhobarss_{f,(2\alpha-1)}\simeq\cyclomod[11]\oplus\cyclomod[11]^2\quad\textrm{and}\quad 
\rhobarss_{f,(\alpha-9)}\simeq\mathbf{1}\oplus\cyclomod[61]^3\simeq\rhobar_{E_4,61}.
\end{equation*}
This eventually proves the following proposition.
\begin{prop}
Let $(k,N)=(4,11)$. Then there exists a newform $f\in\ModSpace[S][4][0][11]$ together with a prime ideal~$\lambda$ in~$\mathcal{O}$ such that $\rhobar_{f,\lambda}$ is reducible if and only if $\ell=11$ or $\ell=61$.
\end{prop}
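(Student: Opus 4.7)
The plan is to first apply Theorem~\ref{thm:square_free_level_case}(1) to reduce the problem to a short finite list of residue characteristics, and then, for each candidate prime $\lambda$ of~$\mathcal{O}$, either exhibit a Frobenius witness that forces irreducibility or verify a direct congruence to an Eisenstein series that forces reducibility.

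For the implication $\Rightarrow$, I would apply Theorem~\ref{thm:square_free_level_case}(1) with $k=4>2$ and $N=11$ prime. Reducibility of $\rhobar_{f,\lambda}$ then forces $\ell\mid 11$, or $\ell<k-1=3$, or $\ell\mid\lcm(11^4-1,11^2-1)$. Since $\lcm(11^4-1,11^2-1)=14640=2^4\cdot 3\cdot 5\cdot 61$, this yields the list $\ell\in\{2,3,5,11,61\}$. For the implication $\Leftarrow$, I go through each prime $\lambda$ of~$\mathcal{O}$ above these primes. For every such $\lambda$ except $(2\alpha-1)$ (above~$11$) and $(\alpha-9)$ (above~$61$), Table~\ref{table:primes_p_bis} supplies a prime $p\nmid 11\ell$ for which the characteristic polynomial $X^2-\overline{a}_pX+\overline{p}^{\,3}\in\Flambda[X]$ of $\rhobar_{f,\lambda}(\Frob_p)$ is irreducible; this immediately rules out reducibility of $\rhobar_{f,\lambda}$. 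Hence only $\lambda_{11}=(2\alpha-1)$ and $\lambda_{61}=(\alpha-9)$ remain to be analyzed.

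For $\lambda_{61}$, the candidate semisimplification is $\mathbf{1}\oplus\cyclomod[61]^{3}$, namely the representation $\rhobar_{E_4,61}$ attached to the weight-$4$ level-$1$ Eisenstein series~$E_4$. The Sturm bound for $\ModSpace[S][4][0][11]$ equals $4\cdot 12/12=4$, so the desired isomorphism $\rhobarss_{f,\lambda_{61}}\simeq\rhobar_{E_4,61}$ follows from the four numerical checks $a_n(f)\equiv \sigma_3(n)\pmod{\lambda_{61}}$ for $n=1,2,3,4$, using $\alpha\equiv 9\pmod{\lambda_{61}}$ and Hecke multiplicativity. For $\lambda_{11}$, the candidate is $\cyclomod[11]\oplus\cyclomod[11]^{2}$, whose determinant is $\cyclomod[11]^{3}=\cyclomod[11]^{k-1}$ as required; one verifies the congruence $a_p\equiv p+p^{2}\pmod{\lambda_{11}}$ at small primes $p\neq 11$ and extends by Hecke multiplicativity.

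The technically delicate point is the case $\lambda_{11}$, since $\ell=N=11$ puts us outside the standing hypothesis $\ell\nmid N$ of Section~\ref{s:preliminaries} and the Eisenstein-series construction of~\S\ref{ss:E}. The natural fix is to replace $E_4$ by a level-$11$ variant obtained by an appropriate $\Up[11]$-stabilization of the oldform~$E_4$, and to compare its reduction modulo~$11$ with $\fbar$; alternatively one can argue purely locally, using the Steinberg description of~\S\ref{ss:Steinberg} at~$p=11$ together with an eigenvalue match. All other verifications reduce to a handful of elementary modular arithmetic checks, so this is the only step where any genuine care is required.
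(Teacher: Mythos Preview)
Your approach is exactly the paper's: apply Theorem~\ref{thm:square_free_level_case}(1) to obtain the candidate list $\{2,3,5,11,61\}$, eliminate all but two prime ideals via the Frobenius irreducibility witnesses of Table~\ref{table:primes_p_bis}, and then exhibit the explicit decompositions $\rhobarss_{f,(2\alpha-1)}\simeq\cyclomod[11]\oplus\cyclomod[11]^2$ and $\rhobarss_{f,(\alpha-9)}\simeq\mathbf{1}\oplus\cyclomod[61]^3$ at the two remaining primes; the paper is equally terse on this last verification, simply asserting the isomorphisms.

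One small correction to your $\lambda_{61}$ verification: you cannot run Sturm directly against $E_4$, because $a_0(E_4)=1/240\not\equiv 0\pmod{61}$, so $\fbar$ and $\overline{E_4}$ cannot coincide in $M_4(\Gamma_0(11))\otimes\Flambda$. You must instead compare $f$ with a level-$11$ stabilization of $E_4$ (e.g.\ one whose $\Up[11]$-eigenvalue matches $a_{11}(f)$ modulo $\lambda_{61}$) before invoking the Sturm bound; once you do this the check at $n=0,\dots,4$ goes through. This is a cosmetic fix, and your identification of the $\ell=11$ case as the only genuinely delicate point is accurate.
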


    \subsection{Dihedral representation}

In this section we discuss an example of dihedral projective representation attached to some specific newform. The new subspace in~$\ModSpace[S][2][0][1888]$ has dimension~$58$ and is split into $16$ Galois orbits. Among them let us consider the newform~$f$ (up to Galois conjugacy) labeled 1888.10a whose first terms in its Fourier expansion at infinity are
\begin{equation*}
f(\tau)=q + \frac{1}{2}\alpha q^3 + \left(-\frac{1}{16}\alpha^4 + \frac{3}{2}\alpha^2 - \alpha - 2\right)q^5 + O(q^6)
\end{equation*}
where $\alpha$ is a root of~$X^5 + 6X^4 - 20X^3 -128X^2 + 48X + 320$. The prime~$5$ is definitely smaller than the bound given in Thm.~\ref{thm:dihedral_case} (namely $3476092007703911714679$ in this case) and one proves that there is mod.~$5$ representation attached to $f$ which has dihedral projective image. Namely, let us consider the prime ideal $\lambda=(5,\alpha/2)$ above~$5$ in~$\mathcal{O}$. Then one checks that the representation $\rhobar_{f,\lambda}$ is isomorphic to $\rhobar_{\mathcal{E},5}$ where $\mathcal{E}$ is the rational CM elliptic curve of conductor~$32$ given by the equation $y^2=x^3-x$. Since $5\equiv 1\pmod{4}$, one knows by the theory of complex multiplication that $\rhobar_{\mathcal{E},5}$ has image included in the normalizer of a split Cartan subgroup of~$\GL(2,\F_5)$. The same conclusion for $\rhobar_{f,\lambda}$ thus follows.

    \subsection{Projective image isomorphic to $A_4$, $S_4$ or $A_5$}

As an illustration of Thm.~\ref{thm:image_isom_A_4}, we report here on an example due to Ribet~(\cite[Rk.~2, p.~283]{Rib97}) and recalled in~\cite[Ex.~3.2, p.~244]{KiVe05} (we warn the reader that the term ``exceptional'' therein refers to a modular representation with projective image isomorphic to $A_4$, $S_4$ or $A_5$). The new subspace in~$\ModSpace[S][2][0][23]$ is $2$-dimensional and generated by one Galois orbit labeled 23.4a in SAGE with coefficients field $K=\Q(\alpha)$ where $\alpha$ is a root of~$X^2+X-1$. Let $\lambda$ be the unique prime ideal above~$3$ in~$\mathcal{O}$. It is shown in~{\em loc. cit.} that the corresponding projective representation has image isomorphic to~$A_5$ and that the field cut out by its kernel is the $A_5$-extension of~$\Q$ given as the splitting field of the polynomial $X^5+3X^3+6X^2+9$.

Several other examples may also be found in {\em loc. cit.} such as a mod.~$19$ representation of projective image isomorphic to~$S_4$ attached to the unique cusp form of weight~$6$, level~$4$ and trivial Nebentypus. The authors also discuss an effective procedure that given a newform~$f$ and a prime~$\ell$ determines whether some mod.~$\ell$ representation attached to~$f$ has projective image isomorphic to~$A_4$, $S_4$ or $A_5$.


\newcommand{\etalchar}[1]{$^{#1}$}

\end{document}